\newtheorem{theorem}{Theorem}
\newtheorem{corollary}{Corollary}
\newtheorem{definition}{Definition}
\newtheorem{example}{Example}
\newtheorem{lemma}{Lemma}
\newtheorem{proposition}{Proposition}
\newtheorem{remark}{Remark}
\numberwithin{equation}{section}
\author{Goncalo Oliveira \\  Duke University and Imperial College London}
\title{Monopoles on AC $3$-manifolds}
\begin{document}
\maketitle

\begin{abstract}
We construct monopoles on asymptotically conical (AC) $3$-manifold $X$ with vanishing second Betti number $b^2(X)$. For sufficiently large mass, our construction covers an open set in the moduli space of monopoles.
\\
We also give a more general construction of Dirac monopoles in any AC manifold, which may be useful for generalizing our result to the case when $b^2(X) \neq 0$.
\end{abstract}


Let $(X^3,g)$ be a $3$ dimensional Riemannian manifold, $P \rightarrow X$ an $SU(2)$-bundle and $\mathfrak{g}_P$ the associated adjoint bundle, equipped with an $Ad$-invariant metric. A pair $(A, \Phi)$ where $A$ is a connection on $P$ and $\Phi \in \Omega^0(X,\mathfrak{g}_P)$ called the Higgs field is said to be a monopole if they satisfy
\begin{equation}\label{eq:monopole}
\ast F_A - \nabla_A \Phi=0.
\end{equation}
In this paper we study asymptotically conical (AC) $3$ manifolds $(X,g)$. These are noncompact and it is helpful to think of them as the interior of a manifold with boundary $\Sigma=\overline{X}$, then the metric $g$ is modeled on a cone metric on $\mathbb{R} \times \Sigma$, see definition \ref{def:AC} for the details. Let $\Sigma = \partial \overline{X}$ and denote by $\Sigma_i$, for $i \in \lbrace 1, ... , l \rbrace $ its connected components. We shall refer to the $i$-th end of $X$ has the one modeled on $(1,+ \infty) \times \Sigma_i$, and let $\rho$ denote a radial coordinate which at each end is identified with the radial distance along the cone. Suppose that at each end, connection $A$ converges to some connection $A_{\infty}$ pulled back from a bundle over the $\Sigma$. In this situation, a monopole $(A,\Phi)$ is said to have finite mass if at each end, $i$ say, there is $m_i \in \mathbb{R}^+$ with $\vert \Phi \vert$ converging uniformly in $\Sigma_i$ to $m_i$. Under such conditions one can prove \cite{Jaffe1980}, \cite{Oliveira2014}
\begin{eqnarray}\label{bc1}
\vert \Phi \vert = m_i - \frac{k^{O}_i}{2\rho} + O(\rho^{-2}) ,
\end{eqnarray}
for some $m_i \in \mathbb{R}$ and $k^O_i \in \mathbb{N}$ (in fact more is known regarding the asymptotic behavior of finite mass monopoles and we refer the reader to previous references). We shall call the tuples $\vec{k}=(k^O_1,...,k^O_l)$ and $\vec{m}=(m_1, ...,m_{l}) \in \mathbb{R}^{l}$ the charge and the mass of the monopole $(A,\Phi)$ respectively. The integer $k = k^{O}_1 + ... + k^O_{l}$ will be called the total charge.

\begin{remark}
In fact one can prove, \cite{Oliveira2014} that under the assumption of finite mass the Higgs field converges to a parallel section of $\mathfrak{g}_P$ at each end. Moreover, if it does not vanish, i.e. $m_i \neq 0$, then $A$ converges to reducible Yang-Mills connections at the ends. Having this in mind we remark that each of the $k^O_i$ is the degree of the complex line bundles over the $\Sigma_i$'s associated with each of such reductions.
\end{remark}

\begin{definition}
Denote the space of smooth pairs $(A, \Phi)$ with charge $\vec{k} \in \mathbb{Z}$ and mass $\vec{m} \in \mathbb{Z}^{l}$ by $C_{\vec{k},\vec{m}}(X,g)$, and by $M_{\vec{k},\vec{m}}$ those $(A,\Phi) \in C_{\vec{k},\vec{m}}$ solving the monopole equation \ref{eq:monopole}.\\
Fix $(A,\Phi) \in M_{\vec{k},\vec{m}}$ and let $\mathcal{G}$ be the space of gauge transformations $g \in Aut(P)$ with $g^{-1} \nabla_{A} g$ in the Sobolev space $H^2_{2,1/2}$ as in definition \ref{def:FunctionSpaces}. Equation \ref{eq:monopole} is invariant under the $\mathcal{G}$-action and we define the moduli space of total charge $k$ and mass $m$ monopoles.
$$\mathcal{M}_{k,m}(X,g)= M_{k,m}(X,g)/ \mathcal{G}.$$
\end{definition}

The main result of this paper is

\begin{theorem}\label{MainTheorem}
Let $k \in \mathbb{Z}$ and $(X,g)$ be an asymptotically conical $3$-manifold with $b^2(X)=0$ (notice this implies $l=1$). Then, there is $\mu \in \mathbb{R}$, such that if $m \geq \mu$ and $X^k(m) \subset X^k$ denotes the open set defined by
$$X^k(m)= \Big\lbrace (p_1,...,p_k) \in X^k \ \Big\vert \ dist(p_i,p_j) > 4 m^{-\frac{1}{2}} \ , \ \text{for $i \neq j$} \Big\rbrace,$$
while $\check{\mathbb{T}}^{k-1}= \lbrace (e^{i\theta_1}, ... , e^{i\theta_k}) \in \mathbb{T}^k \ \vert \ e^{i(\theta_1+ ... + \theta_k)} =1 \rbrace$ there is a map
\begin{equation}\label{eq:Map}
\tilde{h}: X^k(m)  \times H^1(X, \mathbb{S}^1) \times \check{\mathbb{T}}^{k-1} \rightarrow M_{k,m},
\end{equation}
i.e. its image consists of monopoles.
\end{theorem}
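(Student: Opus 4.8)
The plan is to construct approximate monopoles by a gluing procedure and then correct them to exact solutions by a fixed-point argument, with the parameters of the map $\tilde h$ encoding the gluing data. The starting point is the rescaling behaviour of \eqref{eq:monopole}: a monopole of mass $m$ has a core of size $\sim m^{-1}$, so the separation condition $\mathrm{dist}(p_i,p_j) > 4 m^{-1/2}$ defining $X^k(m)$ guarantees that the $k$ cores sit in disjoint geodesic balls and that the gluing annuli are wide on the core scale. First I would fix the abelian (Dirac) background: since $b^2(X)=0$, for each class in $H^1(X,\mathbb{S}^1)$ there is a reducible solution $(A^{\mathrm{ab}},\Phi^{\mathrm{ab}})$ of mass $m$ on $X\setminus\{p_1,\dots,p_k\}$ whose abelian part has a unit Dirac singularity at each $p_i$; this is the general Dirac-monopole construction advertised in the introduction, and the hypothesis $b^2(X)=0$ is exactly what lets one solve the relevant abelian equation with prescribed singular data and no topological obstruction. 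The residual flat freedom in this background is parametrised by $H^1(X,\mathbb{S}^1)$, which is why that factor appears in the domain of \eqref{eq:Map}.

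Next I would build the approximate monopole. In each ball $B(p_i,m^{-1/2})$, in geodesic normal coordinates rescaled by $m$, the metric is Euclidean up to $O(m^{-1})$ corrections, and I insert the explicit charge-one Prasad--Sommerfield monopole of the appropriate mass, rotated by a $U(1)$ phase. These $k$ phases, taken modulo the overall gauge rotation, constitute the factor $\check{\mathbb{T}}^{k-1}$, and they must be chosen to match the holonomy of $A^{\mathrm{ab}}$ across the gluing region. I would then interpolate between each rescaled core and $(A^{\mathrm{ab}},\Phi^{\mathrm{ab}})$ using cutoff functions supported in the annuli $m^{-1/2}\le \mathrm{dist}(\cdot,p_i)\le 2 m^{-1/2}$. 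The point of the width $\sim m^{-1/2}$ is that the charge-one monopole is exponentially, i.e.\ $O(e^{-c\,m^{1/2}})$, close to its abelian asymptote there, while the metric error in each core is $O(m^{-1})$; hence the error $\mathcal E = \ast F_A-\nabla_A\Phi$ of the glued pair is small in a suitable $m$-weighted Sobolev norm, with explicit control on how its size depends on $m$.

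The analytic heart is the linear step. I would pass to the gauge-fixed formulation, combining \eqref{eq:monopole} with the Coulomb condition $d_A^\ast$ into a single first-order elliptic (Dirac-type) operator $L_{(A,\Phi)}$, and prove that on the weighted spaces $H^2_{2,1/2}$ of Definition \ref{def:FunctionSpaces} it is \emph{uniformly} invertible as $m\to\infty$, with the norm of its inverse growing at most polynomially in $m$. This requires two matched pieces. On each Prasad--Sommerfield core the linearised operator of the flat charge-one monopole has its only approximate kernel spanned by its moduli --- three translations and one phase rotation --- and these are precisely the directions absorbed into the parameters $X^k(m)$ and $\check{\mathbb{T}}^{k-1}$ (the overall phase being a genuine gauge symmetry, hence quotiented out). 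On the asymptotically conical end a weighted elliptic estimate, using the asymptotics \eqref{bc1} and $b^2(X)=0$ to exclude an $L^2$ cokernel at the chosen weight $\tfrac12$, yields invertibility there. Splicing these estimates with the same cutoffs gives the global uniform bound. The main obstacle I anticipate is exactly this uniform invertibility: one must rule out small eigenvalues of $L_{(A,\Phi)}$ other than the expected gluing moduli, uniformly in the separated positions and in $m$, while simultaneously reconciling the core weighting natural on the $m^{-1}$ scale with the conical weighting at infinity.

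Finally, with $\|\mathcal E\|$ exponentially small and $\|L_{(A,\Phi)}^{-1}\|$ at most polynomial in $m$, I would solve $\mathcal E + L_{(A,\Phi)}\xi + Q(\xi)=0$, where $Q$ is the quadratic remainder of the monopole map, by the contraction-mapping theorem: for $m\ge\mu$ the iteration $\xi\mapsto -L_{(A,\Phi)}^{-1}(\mathcal E+Q(\xi))$ is a contraction on a small ball, producing a unique small correction $\xi$ and hence an exact solution $(A,\Phi)+\xi$ of \eqref{eq:monopole}. Because $\xi$ is small in the weighted norm it changes neither the asymptotic value $m$ of $\vert\Phi\vert$ nor the total charge $k=\sum_i k^O_i$ contributed by the $k$ unit cores, so $(A,\Phi)+\xi\in M_{k,m}$. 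Smooth dependence of every step on the gluing data then defines the map $\tilde h$ of \eqref{eq:Map}, completing the construction.
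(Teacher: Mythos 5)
Your proposal follows essentially the same route as the paper: an abelian Dirac background whose existence for all point configurations uses $b^2(X)=0$ (Theorem \ref{DiracMonopole}), rescaled BPS cores glued in with phases in $\check{\mathbb{T}}^{k-1}$ over annuli of radius $\sim m^{-1/2}$ (Proposition \ref{prop:AproximateSolution}), mass-adapted weighted spaces reconciling the core scale with the conical weights at infinity (Section \ref{FunctionSpaces}), a uniform linear estimate whose abelian part again invokes $b^2(X)=0$ via Lockhart--McOwen theory (Proposition \ref{prop:LinearEquation}), and a contraction-mapping correction (Proposition \ref{prop:MonopoleSolution}). The only substantive deviations are cosmetic: the paper inverts $d_2$ alone (seeking the correction in the image of a right inverse $Q$) rather than the full gauge-fixed operator, and the final error is only polynomially small, $O(m^{-7/4})$ in the weighted norm, since the metric deviation on the cores dominates the exponentially small tail mismatch --- exactly as the earlier part of your own error analysis correctly identifies, so your closing claim of an ``exponentially small'' $\mathcal{E}$ is a slip that does not affect the argument.
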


Moreover, this construction is then showed to descend well to the moduli space of monopoles

\begin{theorem}\label{MainTheorem2}
The map $h$ in equation \ref{eq:Map} descends to a local diffeomorphism
$$h: X^k(m) \times   H^1(X, \mathbb{S}^1) \times \check{\mathbb{T}}^{k-1} \rightarrow \mathcal{M}_{k,m}$$
\end{theorem}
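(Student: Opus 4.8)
The plan is to combine the deformation theory of the monopole equation with the gluing structure of the construction $\tilde h$ from Theorem \ref{MainTheorem}. First I would set up the deformation complex at a monopole $(A,\Phi)\in M_{k,m}$: the infinitesimal gauge action $\xi\mapsto(-d_A\xi,[\Phi,\xi])$ followed by the linearisation of \ref{eq:monopole}, namely $(a,\phi)\mapsto \ast d_A a-\nabla_A\phi-[a,\Phi]$. Combining the linearised equation with the Coulomb gauge-fixing condition (the formal $L^2$-adjoint of the gauge action) produces an elliptic, Dirac-type operator
$$\mathcal{D}(a,\phi)=\big(\ast d_A a-\nabla_A\phi-[a,\Phi],\ d_A^{*}a-[\Phi,\phi]\big),$$
which I would study in the weighted Sobolev spaces $H^k_{j,\delta}$ of definition \ref{def:FunctionSpaces}, choosing the weight $\delta$ to avoid the indicial roots at the single end (recall $l=1$) so that $\mathcal{D}$ is Fredholm. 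The tangent space $T_{[(A,\Phi)]}\mathcal{M}_{k,m}$ is then identified with $\ker\mathcal{D}$ once I show the monopole is irreducible (trivial stabiliser in $\mathcal{G}$, i.e.\ no nonzero covariantly constant $\xi$ with $[\Phi,\xi]=0$ in the weighted space) and unobstructed ($\operatorname{coker}\mathcal{D}=0$).

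The descent of $\tilde h$ to $\mathcal{M}_{k,m}$ is essentially bookkeeping of the residual gauge freedom. Since the structure group reduces to $U(1)$ at infinity, the constant asymptotic $U(1)$-rotation acts on the $k$ phases as the overall phase, and this is a genuine (framing) gauge transformation. Passing from $\mathbb{T}^{k}$ to $\check{\mathbb{T}}^{k-1}$ removes exactly this overall phase, so $h=\pi\circ\tilde h$ (with $\pi:M_{k,m}\to\mathcal{M}_{k,m}$ the quotient) is well defined, and by irreducibility the remaining gauge action on the image is free modulo constants. It then remains to prove that $h$ is a local diffeomorphism, which by equality of dimensions reduces to showing $dh$ is an isomorphism at each point.

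For this I would first match dimensions. The domain has dimension $3k+b^1(X)+(k-1)=4k-1+b^1(X)$. I would compute $\dim\ker\mathcal{D}$ by an index theorem of APS type on the AC manifold with the chosen weight, or, more economically, by a relative-index/excision argument: each glued charge-one constituent contributes $4$, the removed overall phase accounts for $-1$, and the flat twisting by $H^1(X,\mathbb{S}^1)$ contributes $b^1(X)$, giving $4k-1+b^1(X)$ and confirming that $\operatorname{ind}\mathcal{D}$ equals the domain dimension (so in particular $\operatorname{coker}\mathcal{D}=0$). Then I would exhibit the image of $dh$ explicitly: differentiating in the position variables $p_i$ gives the translation modes of each concentrated constituent, differentiating in $\check{\mathbb{T}}^{k-1}$ gives the relative-phase modes, and differentiating in $H^1(X,\mathbb{S}^1)$ gives the flat deformations. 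For $m$ large the constituents are sharply concentrated and, by the separation condition $\operatorname{dist}(p_i,p_j)>4m^{-1/2}$ defining $X^k(m)$, these candidate vectors are approximately $L^2$-orthogonal and supported near distinct points, hence linearly independent in $\ker\mathcal{D}$; the dimension count then forces them to span, so $dh$ is an isomorphism.

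The main obstacle is the uniform (in $m$) gluing analysis underpinning both the unobstructedness and the spanning. Concretely, I expect the hard part to be proving that $\mathcal{D}$ at the glued monopole is surjective with a right inverse whose norm stays bounded as $m\to\infty$, and simultaneously that the explicit parameter variations are genuinely close to honest elements of $\ker\mathcal{D}$ (not merely approximate kernel) and remain linearly independent. Both rest on uniform weighted estimates for the glued configurations and on precise control of the asymptotic/indicial behaviour at the end so that the Fredholm index is exactly $4k-1+b^1(X)$. Once these estimates are in hand, injectivity of $dh$ together with the equality of dimensions upgrades $h$ to the asserted local diffeomorphism.
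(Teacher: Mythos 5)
Your proposal follows essentially the same route as the paper: model the moduli space near $\tilde h(x)$ by $\ker(d_1^*\oplus d_2)$ in the weighted spaces, use Kottke's index $4k+b^1(X)-1$ to match the parameter count, prove surjectivity of the linearisation by the uniform weighted estimates, and show the differential of the gluing map is an isomorphism onto the kernel because the parameter derivatives are approximately kernel elements. The only caveats worth noting are that the paper establishes injectivity via the quantitative estimate $\Vert \pi_h\, dH_x(v)-dH_x(v)\Vert_{H_{1,\beta}}\leq c(m)\Vert v\Vert_P$ (rather than approximate orthogonality of localized modes, which would anyway need care for the non-localized $H^1(X,\mathbb{S}^1)$ directions), and that your parenthetical ``$\operatorname{ind}\mathcal{D}$ equals the domain dimension, so $\operatorname{coker}\mathcal{D}=0$'' is not a valid inference on its own --- vanishing of the cokernel must come from the direct surjectivity argument you correctly identify as the hard analytic step.
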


\begin{remark}
\begin{enumerate}
\item We may interpret the monopoles in the image of the map $h$ as being formed by gluing $k$ well separated monopoles. Then one may think of the parameters in $X^k(m) \times   H^1(X, \mathbb{S}^1) \times \check{\mathbb{T}}^{k-1}$ used in their construction as follows: The points $(p_1,...,p_k) \in X^k(m)$ denote the location of the $k$ monopoles, $\check{\mathbb{T}}^{k-1}$ denotes local phases assigned to each of these and $H^1(X, \mathbb{S}^1)$ a possible twist with a flat connection, i.e. a usual vector potential whose strength (magnetic field) vanishes inside the superconductor filling in $X$.
\item In \cite{Kottke13}, Kottke computes the virtual dimension of the moduli space of monopoles with fixed mass on an AC $3$-manifold and obtains the formula $dim (\mathcal{M}_{m,k})= 4k + \frac{1}{2} b^1(\Sigma)-b^0(\Sigma)$. The long exact sequence on cohomology $H_c(X) \rightarrow H(X) \rightarrow H(\Sigma)$ and the duality $H^*_c \cong (H^{3-*})^*$, can be used to rewrite Kottke's formula as
$$\dim(\mathcal{M}_{m,k})= 4k + b^1(X)- b^2(X) -1,$$
and the construction described here gives a good geometric interpretation of all these parameters in an open set in $\mathcal{M}_{m,k}$. Even though it will be evident that the elements of $b^2(X)$ obstruct the gluing construction, the author believes the analysis here can be changed to take these into account and extend theorems \ref{MainTheorem} and \ref{MainTheorem2} to the case of $b^2(X) \neq 0$.
\end{enumerate}
\end{remark}

We shall now give a short outline of the paper and of the proof of the results above. The first result of the paper is a construction of certain Dirac monopoles. This requires definition \ref{bMap} which constructs $X^k(m) \subset X^k$ as the intersection of a $3k-b^2(X)$ dimensional submanifold with a big open set in $X^k$. Then, for each element in $X^k(m) \times H^1( X , \mathbb{S}^1)$ theorem \ref{DiracMonopole} constructs a Dirac monopole. Later in section section \ref{ApproximateSolution}, proposition \ref{prop:AproximateSolution}, these Dirac Monopoles are smoothed out around the singular points $p_i \in X$. To smooth each singular point requires an element of $\mathbb{S}^1$ and this construction gives a map
\begin{eqnarray}
H:  X^k(m) \times H^1( X , \mathbb{S}^1) \times \check{\mathbb{T}}^{k-1} \rightarrow C_{k,m} (X,g),
\end{eqnarray}
whose image consists of approximate solutions to the Bogomolny equations. Still in in section \ref{ApproximateSolution} we estimate the error term $e_0=e(A_0,\Phi_0)$ of a configuration in the image of the map $H$. Section \ref{Proof} constructs a general setup to solve the monopole equation by deforming an approximate solution $(A_0,\Phi_0)$ in the image of the map $H$. More precisely, section \ref{FunctionSpaces} constructs Function Spaces depending on the approximate solution $(A_0,\Phi_0)$. These are specially adapted to uniformly invert the linearized equation, make the the error term $e_0$ small, and handle the nonlinearities. Then, in section \ref{LinearEquation} the linearised equation is shown to admit a right inverse with uniformly bounded norm on the previously defined function spaces. Finally, section \ref{NonlinearEquation} solves the monopole equation, see proposition \ref{prop:MonopoleSolution}, by using a special version of the contraction mapping principle.\\
To the author's knowledge the first reference in the literature investigating monopoles on a large class of $3$-manifolds is Braam's paper \cite{Braam1989}, which explores monopoles in asymptotically hyperbolic manifolds. Monopoles on asymptotically Euclidean $3$-manifolds have been investigated by Floer \cite{Floer1995} and Ernst \cite{Ernst1995}. The main motivation for all these is the possibility of using the moduli space of monopoles on noncompact $3$-manifolds to attack problems in $3$-dimensional topology. This tries to imitate the way instantons have shed light on $4$ dimensional topology and this possibility have remained unexplored. In this direction here we have extended Floer's and Ernst to the more general class of asymptotically conical manifolds where we are able to describe an open set of the moduli space.

\subsection{Acknowledgements}

The work presented here was done while I was a PhD student at Imperial College London. I would like to thank Simon Donaldson for guidance and encouragement. I would also like to thank Chris Kottke, Lorenzo Foscolo and Michael Singer for discussions. I am very grateful for having had my PhD supported by the FCT doctoral grant with reference SFRH / BD / 68756 / 2010.

\section{Preliminary Remarks}

\subsection{Linearized Operator}\label{sec:LinearizedOp3}

Let $(\nabla_{A}, \Phi)$ be a connection and Higgs field, not necessarily satisfying the Bogomolny equations. For such a configuration the quantity $e_0 = \ast F_{A} - \nabla_{A} \Phi$ may be nonzero. The linearized Bogomolny equation fits into a sequence
\begin{equation}\label{Complex}
\Omega^0(\mathfrak{su}(P)) \overset{d_1}{\rightarrow} \Omega^1(\mathfrak{su}(P)) \oplus \Omega^0(\mathfrak{su}(P)) \overset{d_2}{\rightarrow} \Omega^1(\mathfrak{su}(P)),
\end{equation}
with $d_1\xi = (- \nabla_A \xi , - [\Phi, \xi]) $ and
\begin{eqnarray}\label{eq:LinearizedMonopoleeq}
d_2(a, \phi) = \ast d_A a - \nabla_A \phi - [a, \Phi].
\end{eqnarray}
Their formal adjoints are given by $d_1^* (a, \phi) = - \nabla_A^* a + [\Phi,\phi]$ and $d_2^* a = (\ast d_A a + [a, \Phi], - \nabla^*_A a )$. If $(A, \Phi)$ is a monopole then the sequence in \ref{Complex} is actually an elliptic complex and so the operator $D= d_2 \oplus d_1^*$ acting on sections of $(\Lambda^1 \oplus \Lambda^0)(\mathfrak{su}(P))$ is elliptic. Its formal adjoint is $D^* = d_2^* \oplus d_1$ and these can be written as
$$D= \begin{pmatrix}
\ast d_A & - \nabla_A \\
-d_A^* & 0
\end{pmatrix} + [\Phi ,.] \ \ , \ \ D^* = D- 2[\Phi, .].$$

\begin{lemma}(Standard Weitzenb\"ock)
Let $\nabla_A$ be a connection and $u \in \Omega^1(\mathfrak{su}(2)) \oplus \Omega^0(\mathfrak{su}(2))$, then
\begin{equation}
\Delta_A u =\nabla_A^* \nabla_A u + F^W (u) + Ric^W (u).
\end{equation}
Where $F^W(a, \phi) = \left( \ast [\ast F_A \wedge a], 0 \right)$ and $Ric^W (a, \phi) = ( Ric(a), 0)$.
\end{lemma}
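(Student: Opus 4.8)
The plan is to recognize the stated identity as the classical Bochner--Weitzenb\"ock formula for the twisted Hodge Laplacian, applied separately in each form degree. First I would make explicit that $\Delta_A$ denotes the Hodge--de Rham Laplacian $d_A d_A^* + d_A^* d_A$ acting diagonally on the summand $\Omega^1(\mathfrak{su}(P)) \oplus \Omega^0(\mathfrak{su}(P))$, with no mixing between the two degrees. This diagonal structure is precisely why the correction operators $F^W$ and $Ric^W$ are supported on the $\Omega^1$ factor alone, and it reduces the lemma to two independent computations.

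The degree-zero piece is immediate. On $0$-forms one has $d_A = \nabla_A$, and the formal adjoint of $\nabla_A$ acting on $1$-forms is $d_A^*$, so $\Delta_A \phi = d_A^* d_A \phi = \nabla_A^* \nabla_A \phi$ with no curvature correction. This matches the vanishing of the second components of both $F^W$ and $Ric^W$ and disposes of half the claim.

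The substance is the degree-one computation. I would fix a point $x \in X$ and work in a local orthonormal coframe $\lbrace e^i \rbrace$ adapted so that the Levi--Civita Christoffel symbols vanish at $x$, writing $d_A a = \sum_i e^i \wedge \nabla_i a$ and $d_A^* a = -\sum_i \iota_{e_i} \nabla_i a$. Expanding $(d_A d_A^* + d_A^* d_A)a$, the terms in which both covariant derivatives act on $a$ assemble into the Bochner Laplacian $\nabla_A^* \nabla_A a$ at $x$, while every remaining term carries a commutator $[\nabla_i, \nabla_j]$. This commutator splits into two pieces: the Levi--Civita curvature acting on the form index, which after the contraction dictated by the interior products collapses to the Ricci term $Ric(a)$; and the bundle curvature $F_A$ acting on the $\mathfrak{su}(P)$ factor via the adjoint action, which produces the remaining zeroth-order operator.

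It then remains to identify this bundle-curvature operator with $\ast[\ast F_A \wedge a]$. In general dimension the bundle term reads $\sum_{i,j} e^i \, [F_A(e_i, e_j), a(e_j)]$, and in dimension three I would rewrite it through the Hodge star: since $\ast F_A$ is an $\mathfrak{su}(P)$-valued $1$-form, $[\ast F_A \wedge a]$ is an $\mathfrak{su}(P)$-valued $2$-form whose further $\ast$ returns a $1$-form of exactly this shape. The main obstacle, and essentially the only place care is needed, is this last identification: pinning down the signs produced by $\ast^2 = 1$ on $1$- and $2$-forms in dimension three and by the antisymmetrization in the wedge, so that the contraction of $F_A$ against $a$ matches $\ast[\ast F_A \wedge a]$ on the nose rather than merely up to sign. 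The Riemannian half of the argument is standard and contributes only $Ric(a)$, with no scalar-curvature term appearing for $1$-forms.
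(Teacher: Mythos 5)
The paper states this lemma without proof, treating it as the standard Bochner--Weitzenb\"ock identity, and your outline is exactly that standard argument, correctly executed: the degree-zero part is trivial, the degree-one computation in a synchronous orthonormal frame yields $\nabla_A^*\nabla_A a$ plus commutator terms that split into the Ricci endomorphism and the bundle-curvature contraction, and the only point requiring care is the three-dimensional rewriting of $\sum_{i,j} e^i\,[F_A(e_i,e_j), a(e_j)]$ as $\ast[\ast F_A \wedge a]$ (where $\ast^2 = 1$ on all degrees in dimension three, so no extra sign enters from the double star). Your proposal is correct and consistent with what the paper implicitly relies on.
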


\begin{lemma}\label{lem:MonopoleW3}(Monopole Weitzenb\"ock)
Let $(\nabla_A, \Phi)$ be a connection and an Higgs Field. Let $u \in \Omega^1(\mathfrak{su}(2)) \oplus \Omega^0(\mathfrak{su}(2))$, then
\begin{eqnarray}\label{MonopoleW}
DD^* u & = & \nabla_A^* \nabla_A u - [[u , \Phi], \Phi]  + Ric^W (u) + \epsilon_0^W(u) \\
D^*D u & = & DD^* u + 2(\nabla_A \Phi)^W(u).
\end{eqnarray}
Where $b^W (a, \phi) = \left( \ast [a \wedge b] - [b, \phi] , [ \langle b ,  a\rangle ] \right)$ and $b$ is either $\epsilon_0 = \ast F_A - \nabla_A \Phi$, $Ric$ or $(\epsilon_0 + 2d_A \Phi)$.
\end{lemma}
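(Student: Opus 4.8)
The plan is to peel the Higgs field off as a zeroth order perturbation. I would write $D = D_0 + P$, where $D_0 = \begin{pmatrix} \ast d_A & -\nabla_A \\ -\nabla_A^* & 0 \end{pmatrix}$ is the matrix part and $P = [\Phi, \cdot\,]$ acts diagonally on $(a,\phi)$. Because the metric on $\mathfrak{su}(P)$ is $Ad$-invariant, $P$ is formally skew-adjoint and $D_0$ is formally self-adjoint, so $D^* = D_0 - P$, consistent with the stated $D^* = D - 2[\Phi, \cdot\,]$. Expanding the products gives
\begin{align}
DD^* &= D_0^2 - P^2 - [D_0, P],\\
D^*D &= D_0^2 - P^2 + [D_0, P],
\end{align}
so $D^*D - DD^* = 2[D_0,P]$, and the whole lemma reduces to identifying the three pieces $-P^2$, $[D_0,P]$, and $D_0^2$.

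The two bracket terms are purely algebraic. First, $P^2 u = [\Phi,[\Phi,u]] = [[u,\Phi],\Phi]$, producing the $-[[u,\Phi],\Phi]$ summand. Second, a Leibniz computation should give $[D_0,P] = (\nabla_A\Phi)^W$: commuting each of $\nabla_A$, $\ast d_A$, and $\nabla_A^*$ past $[\Phi,\cdot\,]$ produces terms in which the derivative lands on $\Phi$, and collecting them reproduces exactly $(a,\phi) \mapsto (\ast[a\wedge \nabla_A\Phi] - [\nabla_A\Phi,\phi], [\langle \nabla_A\Phi, a\rangle])$. With these in hand the second identity $D^*Du = DD^*u + 2(\nabla_A\Phi)^W(u)$ is immediate.

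The heart of the proof is the direct computation of $D_0^2$. Applying $D_0$ twice and using the three-dimensional Hodge identities $\ast d_A \ast d_A = d_A^* d_A$ on $1$-forms (with $d_A^* = \ast d_A \ast$ on $2$-forms) together with $d_A^2 = [F_A, \cdot\,]$, the off-diagonal contributions collapse and I expect
\begin{equation}
D_0^2(a,\phi) = \left( \Delta_A a + \ast[\ast F_A \wedge a] - [\ast F_A, \phi], \ \Delta_A \phi + \ast[F_A \wedge a] \right),
\end{equation}
where $\Delta_A$ is the twisted Hodge Laplacian. The curvature terms here assemble precisely into $(\ast F_A)^W$. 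Invoking the Standard Weitzenb\"ock lemma to rewrite $\Delta_A = \nabla_A^*\nabla_A + F^W + Ric^W$, and observing that $F^W(a,\phi) = (\ast[\ast F_A \wedge a], 0)$ is exactly the part of $(\ast F_A)^W$ already sitting in $\Delta_A a$, yields $D_0^2 = \nabla_A^*\nabla_A + (\ast F_A)^W + Ric^W$ (the $Ric$ instance of the bracket is inherited directly from the standard formula). Finally, since $b \mapsto b^W$ is linear and $\ast F_A = \epsilon_0 + \nabla_A\Phi$, I split $(\ast F_A)^W = \epsilon_0^W + (\nabla_A\Phi)^W$. Substituting into $DD^* = D_0^2 - P^2 - [D_0,P]$, the $(\nabla_A\Phi)^W$ from $D_0^2$ cancels against $-[D_0,P] = -(\nabla_A\Phi)^W$, leaving $DD^*u = \nabla_A^*\nabla_A u - [[u,\Phi],\Phi] + Ric^W(u) + \epsilon_0^W(u)$, as required; the $D^*D$ version then carries the surviving $+2(\nabla_A\Phi)^W$, which one may repackage as $(\epsilon_0 + 2d_A\Phi)^W$.

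The main obstacle I anticipate is the sign and Hodge-star bookkeeping in the $D_0^2$ computation. On a $3$-manifold one must track the degree-dependent sign in $d_A^* = \pm \ast d_A \ast$, the identity $\ast\ast = 1$ on $1$- and $2$-forms, and the graded rule $[\alpha \wedge \beta] = (-1)^{pq+1}[\beta \wedge \alpha]$, so that for instance $\ast[F_A \wedge a]$ is correctly matched with $[\langle \ast F_A, a\rangle]$ and $\ast[a \wedge \ast F_A]$ with $\ast[\ast F_A \wedge a]$. Once these conventions are pinned down the identification of the curvature combination as $(\ast F_A)^W$, and hence the cancellation of $(\nabla_A\Phi)^W$, is forced, and everything else is formal manipulation of the brackets.
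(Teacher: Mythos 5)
The paper states this Weitzenb\"ock lemma without proof (it is treated as standard, in the spirit of Jaffe--Taubes), so there is no argument in the text to compare against; your decomposition $D=D_0+P$ with $D_0$ self-adjoint and $P=[\Phi,\cdot\,]$ skew-adjoint, giving $DD^*=D_0^2-P^2-[D_0,P]$ and $D^*D-DD^*=2[D_0,P]$, is exactly the natural way to organize the computation, and the identifications $-P^2u=-[[u,\Phi],\Phi]$, $[D_0,P]=(\nabla_A\Phi)^W$, $D_0^2=\nabla_A^*\nabla_A+(\ast F_A)^W+Ric^W$, followed by the splitting $(\ast F_A)^W=\epsilon_0^W+(\nabla_A\Phi)^W$, do yield both stated identities. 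One bookkeeping slip: if $\Delta_A$ denotes the twisted Hodge Laplacian, then the direct computation gives
\begin{equation*}
D_0^2(a,\phi)=\bigl(\Delta_A a-[\ast F_A,\phi],\ \Delta_A\phi+\ast[F_A\wedge a]\bigr),
\end{equation*}
with \emph{no} separate $\ast[\ast F_A\wedge a]$ term in the first slot --- that term only appears after you expand $\Delta_A a=\nabla_A^*\nabla_A a+\ast[\ast F_A\wedge a]+Ric(a)$ via the Standard Weitzenb\"ock lemma. As written, your intermediate display counts it twice; your closing sentence shows you know it should appear only once, and your final formula for $D_0^2$ is correct, but the display should be corrected so the cancellation of $(\nabla_A\Phi)^W$ against $-[D_0,P]$ comes out with the right coefficient.
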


If $(A,\Phi)=  (A_0 + a,\Phi_0 + \phi )$ for suitable $u=(a, \phi) \in \Omega^1(\mathfrak{su}(P)) \oplus \Omega^0(\mathfrak{su}(P))$ is a monopole, then 
\begin{equation}\label{firstorderPDE}
 \epsilon_0 + D(u) + Q(u,u)=0,
\end{equation}
where the operator $D$ is as above and $Q(u,u) = \begin{pmatrix} \ast [ a \wedge a ] - [a , \phi ] \\ 0 \end{pmatrix}$, i.e. the nonlinear terms appearing here in the gauge-fixed monopole equation are zero order and quadratic.\\

We turn now to one other very important property of monopoles. This is the scale invariance of the Bogomolny equation, inherited from the conformal invariance of the ASD equations in $4$ dimensions. The precise result is

\begin{proposition}\label{scaling}
Let $(\nabla_A, \Phi)$ be a monopole on $(M^3,g)$, where $M^3$ is a Riemannian $3$ manifold. Then $(\nabla_A , \delta^{-1} \Phi)$ is a monopole for $(M^3 , \tilde{g} = \delta^2 g)$.
\end{proposition}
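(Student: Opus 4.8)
The plan is to track precisely how each term of the Bogomolny equation \ref{eq:monopole} responds to the conformal rescaling $\tilde{g} = \delta^2 g$, and to observe that the only metric-dependent ingredient is the Hodge star. Here I take $\delta$ to be a positive constant, as is implicit in the phrase ``scale invariance''.

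First I would note that neither the curvature $F_A$ nor the covariant derivative $\nabla_A \Phi$ depends on the metric: $F_A$ is determined by the connection $A$ alone, and $\nabla_A$ acting on the $0$-form $\Phi \in \Omega^0(X, \mathfrak{g}_P)$ is the metric-independent exterior covariant differential $d_A$. Thus in passing from $g$ to $\tilde{g}$ these two terms are literally unchanged, except that $\Phi$ is replaced by $\tilde{\Phi} = \delta^{-1}\Phi$, which gives $\nabla_A \tilde{\Phi} = \delta^{-1}\nabla_A\Phi$ since $\delta$ is constant.

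The central step is the conformal weight of the Hodge star. On an oriented Riemannian $n$-manifold, writing $\tilde{\ast}$ for the star of $\tilde{g} = \delta^2 g$, the defining relation $\alpha\wedge\tilde{\ast}\beta = \langle\alpha,\beta\rangle_{\tilde{g}}\,\widetilde{\mathrm{vol}}$, together with $\langle\cdot,\cdot\rangle_{\tilde{g}} = \delta^{-2p}\langle\cdot,\cdot\rangle_{g}$ on $p$-forms and $\widetilde{\mathrm{vol}} = \delta^{n}\,\mathrm{vol}$, yields $\tilde{\ast} = \delta^{n-2p}\ast$ on $p$-forms. For $n = 3$ and $p = 2$ this gives $\tilde{\ast} F_A = \delta^{-1}\ast F_A$, the curvature being a $2$-form.

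Assembling these, the Bogomolny equation for $(\nabla_A, \tilde{\Phi})$ relative to $\tilde{g}$ reads $\tilde{\ast} F_A - \nabla_A\tilde{\Phi} = \delta^{-1}\ast F_A - \delta^{-1}\nabla_A\Phi = \delta^{-1}(\ast F_A - \nabla_A\Phi)$, which vanishes by the hypothesis that $(\nabla_A,\Phi)$ solves \ref{eq:monopole} for $g$. The only genuinely substantive point, and the one I would be careful to get exactly right, is the conformal exponent $n-2p$ in the transformation of $\ast$; everything else is bookkeeping, and the factors of $\delta^{-1}$ coming separately from the star and from the rescaled Higgs field are precisely what conspire to cancel.
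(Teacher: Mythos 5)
Your proof is correct and follows essentially the same route as the paper: both reduce the statement to the conformal weight $\tilde{\ast} = \delta^{n-2p}\ast$ of the Hodge star on $p$-forms, apply it with $n=3$, $p=2$ to get $\tilde{\ast} F_A = \delta^{-1}\ast F_A$, and observe that this factor matches the $\delta^{-1}$ from the rescaled Higgs field. Your version merely spells out the derivation of the conformal weight and the metric-independence of $F_A$ and $d_A\Phi$, which the paper leaves implicit.
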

\begin{proof}
In general, if $\omega$ is a $k$ form and $\tilde{\ast}$ the Hodge operator for the metric $\tilde{g}$, then $\tilde{\ast} \omega = \delta^{n-2k} \ast \omega$ ($n=3$). This implies that $\tilde{\ast} F_A = \delta^{-1} \ast F_A = \delta^{-1} \nabla_A \Phi$, and the result follows.
\end{proof}

The Yang-Mills-Higgs (YMH) Energy is defined on a precompact set $U \subset X$ as
\begin{equation}
E_U = \frac{1}{2}\int_U \vert \nabla_A \Phi \vert^2 + \vert F_A \vert^2.
\end{equation}
The Euler Lagrange equations are $d_A^{\ast} F_A = \left[ d_A \Phi, \Phi \right]$, $\Delta_{d_A} \Phi =0$. It is well known that monopoles not only solve these, but also minimize the YMH energy on $X$. The energy over a precompact set $U$ with smooth boundary is given by the flux $\int_{\partial \overline{U}} \langle \Phi , F_A \rangle$. On an AC manifold finite mass monopoles have finite energy given by $E_U = 4 \pi \sum_{i=1}^k m_i k^O_i$, see \cite{Oliveira2014} for a proof.

\subsection{Analytical Preliminaries on AC Manifolds}

\begin{definition}\label{def:AC}
A $3$ dimensional Riemannian manifold $(X^n,g)$ is called asymptotically conical (AC) with rate $\nu < 0$, if there is a compact set $K \subset X$, a Riemann surface $(\Sigma^{2}, g_{\Sigma})$ and a diffeomorphism
$$\varphi : (1 , \infty) \times \Sigma \rightarrow X \backslash K,$$
such that the metric $g_C = dr^2 + r^2 g_{\Sigma}$ on $(1 , \infty) \times \Sigma$ satisfies $\vert \nabla^{j} (\varphi^* g - g_C) \vert_{C} = O (r^{\nu-j})$, for all $j \in \mathbb{N}_0$. Here $\nabla$ is the Levi Civita connection of $g_C$. A radius function will be any positive function $\rho : X \rightarrow \mathbb{R}_{+}$, such that  in $X \backslash K$, $\rho = r \circ \varphi^{-1}$.
\end{definition}

In particular $\Sigma$ may be not be connected. In that case we can write $\Sigma$ as the disjoint union of its connected components, $\Sigma = \cup_{i=1}^{l} \Sigma_i$. Then we shall refer to $\varphi ((1, + \infty) \times \Sigma_i)$ as the $i$-th end of $X$.

The rest of this section contains a brief discussion of the Lockhart-McOwen conically weighted spaces, see chapter $4$ in \cite{Mar02} for more details. For $p>0$, $n \in \mathbb{N}$ and $\beta \in \mathbb{R}$ define the norms
$$\Vert f \Vert_{L^p_{n,\beta}}^p = \sum_{i \leq n} \int_X \vert \rho^{-\beta-i} \nabla^i f \vert^p \rho^{-3} dvol.$$

\begin{definition}
Let $L^p_{n, \beta}$ denote the Banach space completion of the smooth compactly supported sections in the norm $\Vert \cdot \Vert_{L^p_{n,\beta}}$ defined above.
\end{definition}

Later in section \ref{FunctionSpaces} we shall define Hilbert spaces $H_{n,\beta}$ such that $d_2: H_{1,\beta} \rightarrow H_{0,\beta-1}$ is surjective and admits a bounded right inverse for some $\beta \in \mathbb{R}$. To prove that this is the case we shall need to analyze the operator
\begin{equation}\label{eq:SignatureOp}
d+d^* : L^2_{1,\beta} \rightarrow L^2_{0, \beta -1}.
\end{equation}

\begin{lemma}\label{DiracBoundII}
Let $(X^3,g)$ be AC and denote by $g_{\Sigma}$ the metric on the link $\Sigma$ at the conical end of $X$, then the following statements hold.
\begin{enumerate}
\item There is a discrete set
$$D(d+d^*)= \lbrace \beta \in \mathbb{R} \ \vert \ ( \beta +1 ) (3-1 + \beta) \in Spec(\Sigma)_0 \rbrace,$$
such that if $\beta \in \mathbb{R} \backslash  D(d+d^*)$ the operator \ref{eq:SignatureOp} on $1$-forms is Fredholm. Here $Spec(\Sigma)_0$ denotes the spectrum of the Laplacian $\Delta_{g_{\Sigma}}$ on functions.
\item For $\beta \leq - \frac{1}{2}$, $ker(d+d^*)_{\beta} = ker(\Delta)_{\beta}$ and for $1$-forms, there is an isomorphism $ker(\Delta)_{-\frac{3}{2}} \cong H^1_c(X, \mathbb{R}) \cong H^2(X, \mathbb{R})^*$.
\end{enumerate}
\end{lemma}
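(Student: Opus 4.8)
The plan is to treat this as an instance of the Lockhart--McOwen Fredholm theory for elliptic operators on AC manifolds, as developed in \cite{Mar02}. First I would observe that $d+d^*$ is a first-order elliptic operator whose coefficients are asymptotic, at the conical end, to the cone-invariant model operator on $(1,\infty)\times\Sigma$ with metric $dr^2+r^2 g_\Sigma$. The general theorem then states that the operator in \ref{eq:SignatureOp} is Fredholm precisely when the weight $\beta$ avoids the discrete set of indicial roots (critical weights) of the model operator; this immediately yields the first assertion once that set is identified with $D(d+d^*)$.

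To compute the critical weights I would use $(d+d^*)^2=\Delta$, the Hodge Laplacian, so that the indicial roots of $d+d^*$ coincide with the homogeneities at which the model Laplacian admits homogeneous harmonic $1$-forms. Separating variables, take a $\Delta_\Sigma$-eigenfunction $\phi$ with $\Delta_\Sigma\phi=\mu\phi$ and the candidate harmonic function $r^\lambda\phi$ on the cone; a direct computation gives $\Delta_{g_C}(r^\lambda\phi)=(-\lambda(\lambda+1)+\mu)\,r^{\lambda-2}\phi$, so $r^\lambda\phi$ is harmonic iff $\lambda(\lambda+1)=\mu$. The dominant family of harmonic $1$-forms is then $d(r^\lambda\phi)$, which has pointwise homogeneity $\lambda-1$ in an orthonormal coframe. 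Converting homogeneity to weight (an element of $L^2_{0,\beta}$ is borderline at growth rate $\rho^\beta$, as one checks from $dvol\sim r^2\,dr\,dvol_\Sigma$ and $-2\beta-3$) gives $\beta=\lambda-1$, hence $(\beta+1)(\beta+2)=\lambda(\lambda+1)=\mu\in Spec(\Sigma)_0$, matching $D(d+d^*)$. I would then verify that the remaining families (coexact forms arising from $2$-forms, and pullbacks of harmonic forms from $\Sigma$) contribute no new critical weights, using Hodge theory on the surface $\Sigma$ to reduce the spectra on $1$-forms to $Spec(\Sigma)_0$.

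For the second part I would first show $\ker(\Delta)_\beta=\ker(d+d^*)_\beta$ for $\beta\le-1/2$. The inclusion $\ker(d+d^*)_\beta\subseteq\ker(\Delta)_\beta$ is immediate from $(d+d^*)^2=\Delta$. For the converse, apply Green's formula on $X_R=\{\rho\le R\}$: if $\Delta\omega=0$ then $\int_{X_R}(|d\omega|^2+|d^*\omega|^2)$ equals a boundary integral over $\Sigma_R$ bounded by $C\int_{\Sigma_R}|\nabla\omega|\,|\omega|\lesssim R^{2}\cdot R^{\beta}\cdot R^{\beta-1}=R^{2\beta+1}$, which tends to $0$ for $\beta<-1/2$, forcing $d\omega=d^*\omega=0$. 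The borderline $\beta=-1/2$ I would handle by the standard rate-improvement argument: since the kernel is constant on each interval free of critical weights, a harmonic form in $L^2_{1,-1/2}$ already lies in $L^2_{1,\beta'}$ for some $\beta'<-1/2$ down to the next critical weight below, reducing to the clean case.

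Finally, for the identification at $\beta=-3/2$, the key observation is that $L^2_{0,-3/2}$ is exactly the unweighted $L^2(X,dvol)$, so $\ker(\Delta)_{-3/2}$ is the space of $L^2$ harmonic $1$-forms. I would then invoke $L^2$/weighted Hodge theory for AC manifolds: the weight $-3/2$ lies in the chamber between the critical weights $-2$ and $-1$ coming from the zero eigenvalue of $\Delta_\Sigma$, and in this chamber the $L^2$ harmonic $1$-forms are identified with the image of $H^1_c(X,\mathbb{R})\to H^1(X,\mathbb{R})$; tracking the long exact sequence $\cdots\to H^0(X)\to H^0(\Sigma)\to H^1_c(X)\to H^1(X)\to\cdots$ shows this image is all of $H^1_c(X,\mathbb{R})$. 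The remaining isomorphism $H^1_c(X,\mathbb{R})\cong H^2(X,\mathbb{R})^*$ is Poincaré--Lefschetz duality on the oriented $3$-manifold $X$ together with universal coefficients. The \textbf{main obstacle} I expect is precisely this last cohomological identification: pinning down that the weight $-3/2$ computes exactly $H^1_c$, rather than merely its image or a neighboring group, requires care in locating $-3/2$ relative to all critical weights and in handling the contribution of the possibly disconnected link $\Sigma$ to the long exact sequence.
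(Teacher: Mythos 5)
Your route for the bulk of the lemma is essentially the paper's: item (1) is the Lockhart--McOwen Fredholm criterion, with the critical weights identified as the rates of homogeneous closed and coclosed $1$-forms on the exact cone, and your separation-of-variables computation $(\beta+1)(\beta+2)=\lambda(\lambda+1)=\mu$ correctly reproduces the set $D(d+d^*)$ (the paper simply cites \cite{Mar02} for this). Likewise the integration by parts giving $\ker(\Delta)_\beta=\ker(d+d^*)_\beta$ for $\beta\le -1/2$ and the observation $L^2_{0,-3/2}=L^2$ match what the paper does.

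The genuine gap is in your final cohomological identification, precisely the step you flagged as the main obstacle. You identify $\ker(\Delta)_{-3/2}$ with $\mathrm{Im}\left(H^1_c(X)\to H^1(X)\right)$ and then claim the long exact sequence shows this image is all of $H^1_c(X)$. The sequence $H^0(X)\to H^0(\Sigma)\to H^1_c(X)\to H^1(X)$ shows the opposite: the kernel of $H^1_c(X)\to H^1(X)$ is $H^0(\Sigma)/\mathrm{Im}\, H^0(X)\cong\mathbb{R}^{l-1}$, which is nonzero whenever the link $\Sigma$ is disconnected. Moreover, the classes in that kernel \emph{are} represented by $L^2$ harmonic $1$-forms: take the bounded harmonic function $u$ with $du\in L^2$ and prescribed limits $m_i$ at the ends (corollary \ref{LinearProblem2}); then $du\sim r^{-2}$ lies in $\ker(\Delta)_{-3/2}$, is exact (hence zero in $H^1$), but its class in $H^1_c$ is the image of $(m_1,\dots,m_l)$ under the connecting map and is nonzero once the $m_i$ are not all equal. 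So your argument, carried through honestly, undercounts $\ker(\Delta)_{-3/2}$ by $l-1$ dimensions. The correct statement --- and the one Lockhart \cite{Lockhart1987} proves for degree $1<n/2$ at the weight $-3/2$ in the chamber $(-2,-1)$ --- is $\ker(\Delta)_{-3/2}\cong H^1_c(X)$ itself; the ``image of $H^1_c\to H^1$'' answer is the middle-degree ($k=n/2$) phenomenon and does not apply here. The slip happens to be invisible in the paper's application, where $b^2(X)=0$ forces $H^1_c(X)\cong H^2(X)^*=0$ anyway, but it is a real error in the proof of the lemma as stated.
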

\begin{proof}
For the first item see from \cite{Mar02}, chapter $6$. The weights appearing in $D(d+d^*)$ correspond to the rates of the homogeneous closed and coclosed $1$-forms on the metric cone $(\mathbb{R}^+ \times \Sigma, g_C=dr^2 + r^2 g_{\Sigma})$. The second item follows from the fact that $L^2_{0, - 3/2}=L^2$ and the statement that there is an isomorphism between the space of $L^2$ harmonic $1$-forms and the compactly supported cohomology, see \cite{Lockhart1987}.
\end{proof}

\begin{corollary}\label{rmk:LM}
Let $(X^3,g)$ be AC with $b^2(X)=0$, then there is a constant $c>0$ such that for $\alpha < -1$ and all $u \in L^2_{1,\alpha}$
$$\Vert (d+d^*) u \Vert_{L^2_{0, \alpha -1}} \geq c \Vert  u \Vert_{L^2_{1, \alpha}}$$
\end{corollary}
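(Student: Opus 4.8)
The plan is to read the coercivity estimate off two structural facts about $d+d^*$ on the Lockhart--McOwen spaces: triviality of its kernel for $\alpha<-1$, which is exactly where $b^2(X)=0$ is used, and the Fredholm property furnished by Lemma \ref{DiracBoundII}(1). The underlying principle is that a Fredholm operator between Banach spaces with trivial kernel is automatically bounded below: its range is closed, so it is a continuous bijection onto a Banach subspace, and the bounded inverse theorem provides an estimate $\Vert(d+d^*)u\Vert \ge c\Vert u\Vert$ with $c=1/\Vert(d+d^*)^{-1}\Vert$ on the range. Thus the substance of the argument is to establish injectivity and then to control the constant.

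First I would show $\ker(d+d^*)_\alpha=0$ for every $\alpha<-1$. Since $\alpha<-1\le-\tfrac12$, Lemma \ref{DiracBoundII}(2) identifies $\ker(d+d^*)_\alpha$ with $\ker(\Delta)_\alpha$ on $1$-forms. Weighted kernels are monotone in the weight, because $L^2_{1,\alpha}\subseteq L^2_{1,\alpha'}$ whenever $\alpha\le\alpha'$; hence for $\alpha\le-\tfrac32$ one has $\ker(\Delta)_\alpha\subseteq\ker(\Delta)_{-3/2}$, and the latter vanishes since $\ker(\Delta)_{-3/2}\cong H^2(X,\mathbb{R})^*=0$ under the hypothesis $b^2(X)=0$. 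For the remaining window $-\tfrac32<\alpha<-1$ I would invoke the standard Lockhart--McOwen fact that $\dim\ker(\Delta)_\beta$ is constant on any interval of weights disjoint from $D(d+d^*)$. Reading off the critical set from Lemma \ref{DiracBoundII}(1), the quantity $(\beta+1)(\beta+2)$ is strictly negative on $(-2,-1)$ and so never belongs to $\mathrm{Spec}(\Sigma)_0\subset[0,\infty)$; therefore $(-2,-1)$ contains no critical weight, and $\ker(\Delta)_\alpha=\ker(\Delta)_{-3/2}=0$ throughout $[-\tfrac32,-1)$. Combining the two ranges gives injectivity for all $\alpha<-1$.

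With injectivity established, Lemma \ref{DiracBoundII}(1) makes $d+d^*$ Fredholm at each noncritical $\alpha<-1$, and the closed-range principle above yields $\Vert(d+d^*)u\Vert_{L^2_{0,\alpha-1}}\ge c_\alpha\Vert u\Vert_{L^2_{1,\alpha}}$ for each such weight. The main obstacle is then the uniformity of the constant, and I expect the clean estimate to live on the critical-weight-free window $(-2,-1)$ that contains the weights used in Section \ref{FunctionSpaces}. On this window I would obtain a single $c$ in one of two ways: either by noting that $c_\alpha=1/\Vert(d+d^*)^{-1}\Vert$ varies continuously in $\alpha$ and is therefore bounded below on compact subintervals, or by conjugating $u=\rho^{\,\alpha+3/2}v$ to reduce every weight to the fixed value $-\tfrac32$, under which $d+d^*$ transforms into $d+d^*$ plus a zeroth-order term supported in the end and controlled by $(\alpha+\tfrac32)/\rho$. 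The delicate feature to keep track of is that the critical weights $\beta$ with $(\beta+1)(\beta+2)=\lambda_j\in\mathrm{Spec}(\Sigma)_0$ accumulate at $-\infty$ with gaps shrinking to zero, so $c_\alpha$ necessarily degenerates as $\alpha$ approaches them; the uniform constant must therefore be extracted on a fixed window bounded away from these weights, which is precisely the regime in which the surjectivity of $d_2$ in Section \ref{FunctionSpaces} is needed.
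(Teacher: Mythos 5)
Your proposal is correct and follows essentially the same route as the paper: Fredholmness from Lemma \ref{DiracBoundII}(1) gives closed range and a lower bound off the kernel, the kernel is killed for $\alpha\leq -\tfrac32$ via $\ker(d+d^*)_\alpha=\ker(\Delta)_\alpha\subseteq\ker(\Delta)_{-3/2}\cong H^2(X,\mathbb{R})^*=0$, and the absence of critical rates in $(-2,-1)$ (since $(\beta+1)(\beta+2)<0$ there) pushes the conclusion up to $\alpha<-1$. Your added remark that the constant can only be taken uniform on a window bounded away from the critical weights is a legitimate refinement that the paper's proof leaves implicit, but it does not change the argument.
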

\begin{proof}
It follows from the fact that $d+d^*$ is Fredholm for those $\alpha \not\in D(d+d^*)$, that $L^2_{0, \alpha -1}=(d+d^*)(L^2_{1,\alpha}) \oplus W_{\alpha -1}$, with $W_{\alpha-1} \cong ker(d+d^*)_{-2-\alpha}$. Since $d+d^*$ has closed image for $\alpha \notin D(d+d^*)$, there is $c>0$ such that
$$\Vert (d+d^*) u \Vert_{L^2_{0, \alpha -1}} \geq c \Vert  u \Vert_{L^2_{1, \alpha}},$$
for all $u \in ker(d+d^*)_{\alpha}^{\perp}$. Moreover, for $\alpha \leq - \frac{3}{2}+1$, we can integrate by parts and so $ker(\Delta)_{\alpha}= ker(d + d^*)_{\alpha}$. This together with the second item in lemma \ref{DiracBoundII} gives $ker(d+d^*)_{\alpha} \subset ker(\Delta)_{-\frac{3}{2}} \cong H^2(X, \mathbb{R})^*$, for all $\alpha \leq -\frac{3}{2}$. Then, the assumption that $b^2(X)=0$ finally gives $ker(d+d^*)_{\alpha} =0$
for all $\alpha \leq -\frac{3}{2}$. However, as the Laplacian $\Delta_{\Sigma}$ has no negative eigenvalues, the first item in lemma \ref{DiracBoundII} gives that there are no critical rates $\alpha$ in the interval $(-2,-1)$. Hence, one can increase $\alpha$ up until (but excluding) $-1$.
\end{proof}

%

The following two results will also be used later during the construction of monopoles on AC $3$ manifolds and it is convenient to have them stated now.

\begin{lemma}\label{Kato}
Let $\nabla_{A}$ be a metric compatible connection on an Hermitian vector bundle $E$ over an AC manifold $(X^3 , g)$. Then, for all $\alpha \in [1,3]$, there is a constant $c_K(\alpha) > 0$, such that
$$\left( \int_X \vert \rho^{\frac{1}{2}} u \vert^{2 \alpha} \frac{dvol_g}{\rho^3}\right)^{\frac{1}{2\alpha}} \leq c_K(\alpha) \left( \int_X \vert \nabla_A u \vert^2 \right)^{\frac{1}{2}}$$
for all smooth and compactly supported section $u$. In particular for $\alpha =3,1$ one has respectively $\Vert u \Vert_{L_6}^2 \leq c_K \Vert \nabla_A u \Vert^2_{L_2}$ and $\Vert \rho^{-1} u \Vert_{L_2}^2 \leq c_K \Vert \nabla_A u \Vert^2_{L_2}$.
\end{lemma}
\begin{proof}
Kato's inequality $\vert \nabla \vert u \vert \vert \leq \vert \nabla_A u \vert$, holds pointwise for all irreducible Hermitian connections. The proof follows from combining this with corollary $1.3$ in \cite{Hein11}.
\end{proof}

\begin{lemma}\label{LimitLemma}
In the conditions of lemma \ref{Kato}. Let $u$ be a section such that $\nabla_A u \in L^2$, then there is a covariant constant limit $u\vert_{\Sigma} \in \Gamma(\Sigma, E \vert_{\Sigma})$. Moreover, on the cone $C(\Sigma_i)$ over each end there is an inequality
$$\Vert \vert u \vert - u_{\Sigma_i} \Vert_{L^{2\alpha}_{0, -\frac{1}{2}} } \leq \Vert \nabla_A u \Vert_{L^2}.$$
\end{lemma}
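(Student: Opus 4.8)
The plan is to prove the two assertions separately, and to reduce the quantitative one to the Hardy--Kato inequality of Lemma~\ref{Kato}. The key observation for the estimate is that, unwinding the weighted norm with $\beta=-\tfrac12$ and $p=2\alpha$, its left-hand side is exactly the expression controlled in Lemma~\ref{Kato}:
$$\big\Vert\,|u|-u_{\Sigma_i}\,\big\Vert_{L^{2\alpha}_{0,-1/2}} = \Big(\int_X \big|\rho^{1/2}(|u|-u_{\Sigma_i})\big|^{2\alpha}\,\frac{dvol}{\rho^3}\Big)^{1/(2\alpha)}.$$
Since $u_{\Sigma_i}$ will turn out to be a constant, $\nabla(|u|-u_{\Sigma_i}) = \nabla|u|$, and the pointwise Kato inequality $|\nabla|u||\le|\nabla_A u|$ gives $\Vert\nabla(|u|-u_{\Sigma_i})\Vert_{L^2}\le\Vert\nabla_A u\Vert_{L^2}$. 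Thus it would suffice to apply the scalar case of Lemma~\ref{Kato} (trivial bundle with $\nabla_A=d$) to $f=|u|-u_{\Sigma_i}$.

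To produce the limit I would argue along radial rays. On the $i$-th end $g_C=dr^2+r^2 g_\Sigma$ and $dvol_C = r^2\,dr\,dvol_\Sigma$, so for fixed $\sigma\in\Sigma_i$ the curve $\tilde u(r)=P_r^{-1}u(r,\sigma)$ obtained by parallel transport $P_r$ along $s\mapsto(s,\sigma)$ lies in a fixed fibre and satisfies $|\tfrac{d}{dr}\tilde u|=|\nabla_{\partial_r}u|\le|\nabla_A u|$. Cauchy--Schwarz against the cone weight gives
$$|\tilde u(r_2)-\tilde u(r_1)| \le \int_{r_1}^{r_2}|\nabla_A u|\,ds \le \Big(\int_{r_1}^{r_2}|\nabla_A u|^2 s^2\,ds\Big)^{1/2} r_1^{-1/2},$$
whose right-hand side tends to $0$ as $r_1\to\infty$ for almost every $\sigma$ (by Fubini, since $\nabla_A u\in L^2$). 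Hence $\tilde u(r)$ converges, defining $u|_\Sigma$; as $\nabla_A$ is metric, parallel transport is isometric, so $|u(r,\sigma)|\to u_{\Sigma_i}:=|u|_\Sigma|$ and, by the reverse triangle inequality, $|\,|u(r,\sigma)|-u_{\Sigma_i}\,|\le r^{-1/2}(\int_r^\infty|\nabla_A u|^2 s^2\,ds)^{1/2}$. That $u|_\Sigma$ is covariant constant I would extract from the tangential part of the energy: in the orthonormal cone frame the tangential derivative appears as $r^{-1}\nabla^\Sigma u$, so $\int|\nabla_A u|^2\,dvol_C \ge \int_1^\infty(\int_{\Sigma_i}|\nabla^\Sigma u|^2\,dvol_\Sigma)\,dr$, and finiteness forces $\int_{\Sigma_i}|\nabla^\Sigma u|^2\to0$ along some $r_n\to\infty$; passing to the limit (using that near the end $A\to A_\infty$ is pulled back from $\Sigma$) gives $\nabla^\Sigma u|_\Sigma=0$, so in particular $u_{\Sigma_i}$ is constant on each connected component.

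The one genuine obstacle is that $f=|u|-u_{\Sigma_i}$ is not compactly supported, so Lemma~\ref{Kato} does not apply verbatim. I would cut off: take $\chi_R$ equal to $1$ on $\{\rho\le R\}$, vanishing on $\{\rho\ge2R\}$, with $|\nabla\chi_R|\le C/R$. Lemma~\ref{Kato} applies to $\chi_R f$, and $\nabla(\chi_R f)=\chi_R\nabla f+f\nabla\chi_R$; the first term is bounded by $\Vert\nabla_A u\Vert_{L^2}$, while the decay bound from the previous step controls the second:
$$\Vert f\nabla\chi_R\Vert_{L^2}^2 \le \frac{C^2}{R^2}\int_{R\le\rho\le2R}|f|^2\,dvol \le C'\int_{\{\rho\ge R\}}|\nabla_A u|^2\,dvol \xrightarrow{R\to\infty} 0.$$
Letting $R\to\infty$, with Fatou on the left-hand side, then yields the asserted inequality on the cone over each end (the implicit constant being the Kato constant $c_K(\alpha)$ of Lemma~\ref{Kato}). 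Making this localization to a single end $C(\Sigma_i)$ clean — inserting a further cutoff supported where $\rho$ is large and checking that the resulting commutator terms are again absorbed into $\Vert\nabla_A u\Vert_{L^2}$ — is the only fiddly bookkeeping, and is the step I expect to demand the most care.
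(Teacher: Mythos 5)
Your argument is sound, but it is worth knowing that the paper does not actually prove this lemma: its entire ``proof'' is a citation to Propositions A.0.16 and A.0.17 of the author's thesis \cite{Oliveira2014}, so you have supplied a self-contained reconstruction where the paper offers none. Your route --- radial parallel transport plus Cauchy--Schwarz against the cone weight to get both the existence of the limit and the pointwise tail bound $\big| |u|(r,\sigma)-u_{\Sigma_i}\big| \leq r^{-1/2}\bigl(\int_r^{\infty}|\nabla_A u|^2 s^2\,ds\bigr)^{1/2}$, the tangential energy argument for covariant constancy, and then the scalar case of Lemma \ref{Kato} applied to cutoffs of $|u|-u_{\Sigma_i}$ --- is exactly the standard way such statements are proved on AC/cylindrical ends, and each step checks out: the commutator term $\Vert f\,\nabla\chi_R\Vert_{L^2}$ is indeed absorbed because your tail bound gives $\int_{R\leq\rho\leq 2R}|f|^2\,dvol \leq C R^{2}\int_{\{\rho\geq R\}}|\nabla_A u|^2$, and Fatou closes the limit. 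Two small points deserve a remark. First, your proof produces the inequality with the Kato--Hein constant $c_K(\alpha)$ on the right, whereas the paper states it with constant $1$; this is surely an abuse of notation in the paper (the constant is irrelevant everywhere the lemma is used), but you should state the inequality with an explicit constant. Second, in the localization to a single end and in the $L^{2\alpha}$ control over the compact ``neck'' annulus $\{1\leq\rho\leq 2\}$, your tail bound only gives $f(r,\cdot)\in L^2(\Sigma_i)$ fibrewise, so for $\alpha>1$ you need a local Sobolev embedding on the annulus (using $\nabla f\in L^2$ there) rather than the pointwise bound alone; this is precisely the ``fiddly bookkeeping'' you flagged, and it does go through.
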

\begin{proof}
This lemma is a particular case of propositions $A.0.16$ and $A.0.17$ in the Appendix A to \cite{Oliveira2014}.
\end{proof}

\subsection{Monopoles on $(\mathbb{R}^3, g_E)$}\label{MonopolesR3}

In this short section $\mathcal{M}_{k,m}$ denotes the moduli space of charge $k \in \mathbb{Z}$ and mass $m \in \mathbb{R}^+$ monopoles in Euclidean $\mathbb{R}^3$. In the construction of the approximate solution it will be important to scale these monopoles. Given $\lambda \in \mathbb{R}^+$, there is a bijection $\mathcal{M}_{k,m} \rightarrow \mathcal{M}_{k,\lambda m}$, which can be described as follows. Let $\lambda \in \mathbb{R}^+$ and $(A,\Phi)$ a monopole on $(\mathbb{R}^3, g_E)$, then $(A, \lambda \Phi)$ is a monopole on $(\mathbb{R}^3,\lambda^{-2} g_E)$. Using the scaling map $\exp_{\lambda}(x)= \lambda x$ on $ \mathbb{R}^3$ we define
$$\Phi_{\lambda} = \lambda \exp_{\lambda}^* \Phi \ , \ A_{\lambda} = \exp_{\lambda}^* A \ , \ (g_E)_{\lambda}= \lambda^{-2} \exp_{\lambda}^* g_E.$$
Since the Euclidean metric is invariant under scaling $(g_E)_{\lambda}=g_E$, and $(A_{\lambda},\Phi_{\lambda} )$ is a monopole for the Euclidean metric. However, the monopole $(A_{\lambda},\Phi_{\lambda} )$ no longer has mass $m$ but mass $\lambda m$.

Let $\underline{\mathbb{C}^2}$ denote the trivial rank $2$ complex vector bundle, then an isomorphism $\eta : \underline{\mathbb{C}^2} \vert_{\mathbb{R}^3 \backslash \lbrace 0 \rbrace} \rightarrow H^k \oplus H^{-k}$ is called a framing. We shall fix a framing $\eta$ which identifies the limiting Higgs Field and connection with the pullbacks via $\eta$ of those determined by the unique $SU(2)$ invariant configurations on the Hopf bundle $H$. These framings are unique up to a factor of $\mathbb{S}^1 / \mathbb{Z}_k$, where $\mathbb{S}^1$ denotes the automorphism group of $H$ equipped with its unique $SU(2)$ invariant connection. Moreover, such a framing $\eta$ also gives an isomorphism $\underline{\mathfrak{su}(2)}\vert_{\mathbb{R}^3 \backslash \lbrace 0 \rbrace} \rightarrow \underline{\mathbb{R}} \oplus H^2$. Hence, given a section $a$ of the adjoint bundle, one can write $a= a^{\Vert} \oplus a^{\perp}$ according to this splitting. These components will be respectively called the longitudinal and the transverse one.

\begin{example}\label{1MonopoleExample}
Fixing a framing gives a model for the Hopf bundle $H$, this together with the map that to a charge one monopole assigns the zero of the Higgs field, gives a map $M_{1,1} \cong \mathbb{R}^3 \times \mathbb{S}^1$. Hence, one can consider monopoles with a fixed center and then $\mathring{M}_{1,1} \cong \mathbb{S}^1$.
\end{example}

\begin{definition}\label{def:BPSandR}
There is a unique spherically symmetric monopole with mass $1$, \cite{BPS}, which we shall call the BPS monopole and denote it by $(A^{BPS}, \Phi^{BPS})$. Moreover, one remarks that there is $R>0$ such that $\vert \Phi^{BPS} \vert > 1/2$ outside the ball of radius $R$ in $\mathbb{R}^3$.
\end{definition}

Then, any element of $ \mathring{M}_{1,1}$ is a framing $\eta: \underline{\mathbb{C}^2} \vert_{\mathbb{R}^3 \backslash \lbrace 0 \rbrace} \rightarrow H \oplus H^{-1}$ which at infinity identifies the connection $A^{BPS}$ with the direct sum of the unique $SU(2)$ invariant connection on $H$. Hence, fixing a random framing $\eta_0$ any other $\eta \in  \mathring{M}_{1,1}$ is such that $\eta \circ \eta_0^{-1}$ is multiplication by a constant function with values in $\mathbb{S}^1$. This gives an isomoprhism $ \mathring{M}_{1,1} \cong \mathbb{S}^1$ and from now on we shall think of $ \mathring{M}_{1,1}$ as being $\mathbb{S}^1$. 

The following lemma is an important tool for estimating the error term of the approximate solution

\begin{lemma}\label{lem:BPSestimate}
Let $(A^D , \Phi^D)$ be the Dirac monopole on $\mathbb{R}^3$ with mass $1$. Then, for all $k$ and $(A, \Phi) \in C_{k,1}$, there is $\nu$, such that
$$
\vert \left( \Phi - \Phi^D \right)^{\Vert} \vert = O(r^\nu)  \ , \
\vert \left( \Phi - \Phi^D \right)^{\perp} \vert = O(e^{-r}) \ , \
\vert A - A^D \vert = O(e^{-r}) 
$$
on $\mathbb{R}^3 \backslash \lbrace 0 \rbrace$. Moreover, if $k=1$ i.e. for the BPS monopole then $\nu = - \infty$, i.e. $\vert  \Phi - \Phi^D \vert = O(e^{-r})$.
\end{lemma}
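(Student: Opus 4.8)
The plan is to work entirely on an exterior region $\mathbb{R}^3 \setminus B_R$, where finite mass gives the uniform bound $|\Phi| \geq 1/2$ (for the BPS case this is exactly Definition \ref{def:BPSandR}), and to exploit the framing splitting $\underline{\mathfrak{su}(2)} \cong \underline{\mathbb{R}} \oplus H^2$ into the longitudinal line (spanned by the asymptotically covariant-constant direction $\hat\Phi_\infty$, along which $\Phi^D$ and $A^D$ lie) and its transverse complement. Writing the difference field $u = (a,\phi) = (A - A^D, \Phi - \Phi^D)$ and using that the framing matches the abelian reduction of $A$ to $A^D$, the connection difference is purely transverse, $a = a^{\perp}$, so that the three estimates split into two mechanisms: exponential decay of the transverse fields $a^{\perp}, \Phi^{\perp}$ driven by the mass gap, and polynomial decay of the single longitudinal scalar $\phi^{\Vert} = (\Phi - \Phi^D)^{\Vert}$ governed by the massless abelian sector.

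The main step, and the principal obstacle, is the transverse decay. On flat $\mathbb{R}^3$ one has $Ric = 0$, and since $(A,\Phi)$ is a monopole $\epsilon_0 = 0$, so the Monopole Weitzenb\"ock of Lemma \ref{lem:MonopoleW3} applied to the transverse part yields a genuine mass term: because $\Phi$ lies in the longitudinal direction up to exponentially small transverse corrections, the double commutator acts as $-[[u^{\perp},\Phi],\Phi] = |\Phi|^2 u^{\perp}$ (the adjoint action squares to $-|\Phi|^2$ on $H^2$). Projecting the monopole equation onto the transverse bundle and keeping track that the residual source is quadratically small, I would obtain
\[
\nabla_A^* \nabla_A u^{\perp} + |\Phi|^2\, u^{\perp} = O(|u|^2).
\]
Pairing with $u^{\perp}$ and invoking Kato's inequality (Lemma \ref{Kato}) to pass to the scalar $|u^{\perp}|$ gives the differential inequality $\Delta |u^{\perp}| \geq (|\Phi|^2 - o(1))|u^{\perp}| \geq \tfrac14 |u^{\perp}|$ for $r \geq R$. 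Comparing $|u^{\perp}|$ against the barrier $\psi = C e^{-\mu r}/r$, which solves $\Delta \psi = \mu^2 \psi$ on $\mathbb{R}^3 \setminus \{0\}$, via the maximum principle then forces $|u^{\perp}| = O(e^{-\mu r})$; since $|\Phi| \to m = 1$ one may take $\mu$ arbitrarily close to $1$, recorded as $O(e^{-r})$. This simultaneously gives $|\Phi^{\perp}| = O(e^{-r})$ and, as $A - A^D = a^{\perp}$, the estimate $|A - A^D| = O(e^{-r})$.

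For the longitudinal scalar, projecting the monopole identity $\nabla_A^* \nabla_A \Phi = 0$ onto the abelian direction and discarding the transverse cross-terms (which are $O(e^{-2r})$ by the previous step) shows that $\Phi^{\Vert}$ is harmonic up to an exponentially small error. Since $\Phi^D = m - \tfrac{k}{2r}$ is exactly harmonic with the prescribed monopole singularity and total charge, the difference $\phi^{\Vert}$ is a bounded harmonic function on $\mathbb{R}^3 \setminus B_R$ decaying at infinity, modulo $O(e^{-2r})$. A multipole (spherical-harmonic) expansion, in which the $l=0$ Coulomb term is already fixed by the charge, then gives $\phi^{\Vert} = O(r^{-2})$, so $\nu = -2$ works in general.

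Finally, for the BPS monopole the extra ingredient is spherical symmetry: $\Phi^{BPS}$ depends only on $r$, so every multipole with $l \geq 1$ vanishes and the Coulomb term is matched exactly, while the explicit profile gives $|\Phi^{BPS}| = m - \tfrac{k}{2r} + O(e^{-2r})$; hence $\phi^{\Vert}$ retains only its exponential remainder and $\nu = -\infty$. The genuinely delicate points I expect are the barrier comparison at the conical end—made routine by Kato's inequality and the uniform bound $|\Phi| \geq 1/2$—and verifying that the quadratic remainder $Q(u,u)$ is negligible against the mass term so that the Schr\"odinger-type inequality is not spoiled; both are standard once the mass gap has been isolated from the massless abelian channel.
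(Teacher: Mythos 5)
The paper does not actually prove this lemma: it is stated as a known fact about finite-mass monopoles on $(\mathbb{R}^3,g_E)$ (the classical asymptotics of Jaffe--Taubes, already invoked around equation \ref{bc1}), and in the body of the paper it is only ever applied to the BPS monopole, in lemma \ref{prop:ErrorEstimate}. So there is no in-text argument to compare yours against; your sketch follows the classical route (mass gap in the transverse channel via the Weitzenb\"ock formula and a barrier function, harmonic-function asymptotics in the abelian channel), which is the right mechanism.

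There is, however, one step of yours that fails as written: the assertion that ``the connection difference is purely transverse, $a = a^{\perp}$.'' The framing only matches the bundles and the limiting configurations at infinity; it does not kill the longitudinal component $a^{\Vert}$, which is an honest real-valued $1$-form on the exterior region, and for $k \geq 2$ it cannot in general be gauged to be exponentially small. Indeed, taking longitudinal parts of the curvatures and using $\ast F_A = \nabla_A \Phi$ gives $\langle F_A , \hat\Phi \rangle = \ast d \vert \Phi \vert$, hence $d a^{\Vert} = \ast d \bigl( \vert \Phi \vert - \phi^D \bigr) + O(e^{-\delta r})$; since $\vert \Phi \vert - \phi^D = O(r^{-2})$ has generically nonvanishing higher multipole corrections for multimonopoles, $d a^{\Vert}$ decays only polynomially, and so must $a^{\Vert}$. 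Your argument therefore yields $\vert A - A^D \vert = O(e^{-r})$ only when $k=1$, where $\vert \Phi^{BPS} \vert - \phi^D$ is itself exponentially small --- which happens to be the only case the paper uses --- while for general $k$ it proves the correct but weaker statement that $a^{\perp}$ decays exponentially and $a^{\Vert} = O(r^{\nu})$. A second, lesser, issue is a circularity in your transverse equation: the projection onto $H^2$ is built from $\hat\Phi = \Phi / \vert \Phi \vert$, and the failure of $\nabla_A$ to commute with it is controlled by the transverse part of $\nabla_A \Phi$ --- exactly the quantity whose decay you are establishing --- so the clean inequality $\nabla_A^* \nabla_A u^{\perp} + \vert \Phi \vert^2 u^{\perp} = O(\vert u \vert^2)$ needs either a bootstrap or, as in Jaffe--Taubes, a preliminary exponential decay estimate for the gauge-invariant quantities $\vert \nabla_A \Phi \vert^2 - \vert d \vert \Phi \vert \vert^2$ and $\vert F_A \vert^2 - \vert \langle F_A , \hat\Phi \rangle \vert^2$ before the splitting is introduced.
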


\section{Dirac Monopoles}\label{section:DiracMonopole}

This section contains the linear analysis necessary for the construction of Dirac monopoles, the main result comes in the form of theorem \ref{DiracMonopole}, below.\\
Let $l=b_0(\Sigma)$, then given a total charge $k \in \mathbb{N}$ and mass $m=(m_1, ...,m_{l}) \in  \mathbb{R}^{l} \cong H^0(\Sigma, \mathbb{R})$ we shall construct Dirac monopoles with this mass and total charge. The quantity
$$m_0(m) = \frac{1}{l} \sum_{i=1}^{b_0(\Sigma)} m_i, $$
is called the average mass of the monopole.

\begin{definition}
Let $p=(p_1,...,p_k) \in X^k$ and define the current $\delta \in (C^{\infty}_0(X))^*$, by
$$\delta(f)= \sum_{i=1}^{k} f(p_i) $$
for a compactly supported $f \in C^{\infty}_0(X)$.
\end{definition}

These points can be repeated and so the current $\delta$ has multiplicities giving a vector $k^{I}=(k_1^{I}, ..., k_{\sharp-pts}^{I}) \in \mathbb{Z}^{\sharp-pts}$ of charges generating a flux that must then leave $X$ through its ends with some charges $k^O \in \mathbb{Z}^{l}$ with $\vert k^I \vert = \vert k^O \vert$.
The Laplacian $\Delta$ acts $C^{\infty}_0(X)$ and one can consider its transpose operator, also denoted by $\Delta$ acting on $H \in (C^{\infty}_0)^*$ by $\Delta H (g) = H(\Delta g)$, for all $g \in C^{\infty}_0(X)$.

\begin{proposition}\label{LinearProposition}
Let $(p_1,...,p_k) \in X^k$ and $(m_1,...,m_{l}) \in  \mathbb{R}^{l}$, then there is a current $H^D \in (C^{\infty}_0(X))^*$ such that $\Delta H^D = \delta$. This can be represented by an integral operator
$$H^D(f) = \int_X f \phi^D ,$$
where $\phi^D$ is an harmonic function on $X \backslash \lbrace p_1,...,p_k \rbrace$ and $k^O \in \mathbb{R}^{l}$ satisfying $\vert k^O \vert = \vert k^I \vert$ such that
\begin{eqnarray}\label{BehaviourAtPoints}
\phi_D \big\vert_{U(p_i)} & = & - \frac{k_i^{I}}{2r} + O(r^0)  \\ \label{BehaviourAtEnds}
\phi_D \big\vert_{U(\Sigma_i)} & = & m_i - \frac{k_i^{O}}{Vol(\Sigma_i)} \frac{1}{r} + O(r^{-2}). 
\end{eqnarray}
Where $U(p_i)$ and $U(\Sigma_i)$ respectively denote a neighborhood of $p_i$ and the $i$-th end.
\end{proposition}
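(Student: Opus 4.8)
The plan is to write $\phi^D$ as an explicit model carrying the prescribed point singularities and the prescribed asymptotic mass, plus a decaying correction supplied by the weighted Fredholm theory of $\Delta$; the equation $\Delta H^D=\delta$ and both expansions \ref{BehaviourAtPoints}--\ref{BehaviourAtEnds} then hold by construction, and the balance $|k^I|=|k^O|$ is a flux computation. Grouping the repeated points, write $\delta=\sum_j k_j^I\delta_{q_j}$ over the distinct $q_j$, so $|k^I|=\sum_j k_j^I=k$. For each $q_j$ let $r_j=\mathrm{dist}(\cdot,q_j)$, fix a cutoff $\chi_j$ supported in a small geodesic ball, and let $\Gamma_j$ be the fundamental solution of $\Delta$ centred at $q_j$, normalised as in the statement so that $\Gamma_j=-\tfrac{k_j^I}{2r_j}+O(1)$ and $\Delta\Gamma_j=k_j^I\delta_{q_j}$ as currents. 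Choose also $\psi\in C^\infty(X)$ equal to $m_i$ on the $i$-th end for $\rho$ large. With $G=\sum_j\chi_j\Gamma_j$ one gets $\Delta G=\delta-f_0$ and $\Delta\psi=-f_1$ with $f_0,f_1\in C^\infty_0(X)$, the compactly supported terms coming from the commutators $[\Delta,\chi_j]$, the deviation of $g$ from Euclidean near the $q_j$, and the transition region of $\psi$. The ansatz $\phi^D=\psi+G+v$ then reduces the proposition to solving $\Delta v=f_0+f_1=:f$ with $f\in C^\infty_0(X)$ and $v\to0$ at every end.

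To solve this I would use the Lockhart--McOwen theory invoked in Lemma \ref{DiracBoundII}, now for the scalar Laplacian. On the model cone the critical rates of $\Delta$ are the $\alpha$ with $\alpha(\alpha+1)\in \mathrm{Spec}(\Sigma)_0$; since $\Delta_\Sigma\geq0$, the eigenvalue $0$ gives the rates $0$ and $-1$, and every positive eigenvalue gives one rate $>0$ and one rate $<-1$. Hence $(-1,0)$ contains no critical rate and $\Delta:L^2_{2,\beta}\to L^2_{0,\beta-2}$ is Fredholm for $\beta\in(-1,0)$. Its kernel is the space of harmonic functions tending to $0$ at infinity, which vanishes by the maximum principle since $X$ is connected; by the $L^2$ duality $\mathrm{coker}(\Delta)_{\beta-2}\cong\ker(\Delta)_{-1-\beta}$ with $-1-\beta\in(-1,0)$ the cokernel vanishes too, so $\Delta$ is an isomorphism there. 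As $C^\infty_0\subset L^2_{0,\beta-2}$, there is a unique $v\in L^2_{2,\beta}$ with $\Delta v=f$, and interior elliptic regularity makes it smooth. Since $\Delta\phi^D=\delta$ is supported on $\{p_1,\dots,p_k\}$, the function $\phi^D=\psi+G+v$ is harmonic and smooth on $X\setminus\{p_1,\dots,p_k\}$ and represents $H^D$ by $H^D(f)=\int_X f\,\phi^D$.

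Next I would read off the asymptotics. Near $p_i$ both $\psi$ and $v$ are smooth, so $\phi^D=\Gamma_j+O(1)=-\tfrac{k_i^I}{2r}+O(1)$, giving \ref{BehaviourAtPoints}. On the $i$-th end $G\equiv0$, $\psi\equiv m_i$, and $v$ is harmonic and lies in $L^2_{2,\beta}$; since $(-1,0)$ carries no critical rate, the conical expansion of $v$ begins at the rate $-1$, i.e. $v=\tfrac{c_i}{r}+(\text{strictly faster})$, the remainder being controlled by the first nonzero eigenvalue of $\Delta_{\Sigma_i}$ and by the AC rate $\nu$ of Definition \ref{def:AC}. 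Setting $k_i^O=-\mathrm{Vol}(\Sigma_i)\,c_i$ yields \ref{BehaviourAtEnds}. Finally, the charges are the fluxes of $\nabla\phi^D$: integrating $\Delta\phi^D=0$ over $\{\rho\leq R\}\setminus\bigcup_i B_\epsilon(p_i)$ and applying the divergence theorem equates the flux through $\{\rho=R\}$ with that through the small spheres, and in the normalisation of the statement these limits are $\sum_i k_i^O$ and $\sum_i k_i^I$ respectively, giving $|k^O|=|k^I|$.

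The existence statement is entirely carried by the weighted isomorphism of the second step, which is routine once the critical rates are located. The one genuinely delicate point is the refined end expansion: extracting the single $r^{-1}$ mode and showing the remainder is as clean as in \ref{BehaviourAtEnds} requires combining the separation of variables on the exact cone with the $O(r^{\nu-j})$ error of the AC metric, so that the perturbation does not reintroduce a slower-decaying term. I expect this to be the main technical obstacle, and would handle it through the asymptotic-expansion results for the Laplacian on conically ended manifolds in \cite{Mar02}.
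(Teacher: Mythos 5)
Your proof is correct, but it takes a genuinely different route from the paper's. The paper proceeds variationally: it mollifies the current $\delta$ by smooth densities $\delta^{\epsilon}$ of uniformly bounded $L^1$-norm, solves $\Delta \phi_D^{\epsilon}=\delta^{\epsilon}$ with boundary values $m_i$ via the Riesz representation theorem in the Dirichlet space $H$ (lemma \ref{LinearProblem1} through corollary \ref{LinearProblem3}, resting only on the Sobolev inequality of lemma \ref{Kato}), and then extracts $\phi^D$ as a weak limit $\epsilon \to 0$, reading off \ref{BehaviourAtPoints} and \ref{BehaviourAtEnds} a posteriori from local ellipticity and the conical structure. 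You instead build an explicit parametrix --- cut-off local Green's functions at the $p_i$ plus a function realizing the masses at the ends --- and correct it by solving $\Delta v=f$ with $f\in C^{\infty}_0$ via the Lockhart--McOwen isomorphism $\Delta\colon L^2_{2,\beta}\to L^2_{0,\beta-2}$ for $\beta\in(-1,0)$, where your identification of the critical rates, the maximum-principle argument for the kernel, and the duality $\mathrm{coker}(\Delta)_{\beta-2}\cong\ker(\Delta)_{-1-\beta}$ are all correct. Your route makes the singular structure at the points and the decay at the ends manifest by construction and avoids the limiting argument in $\epsilon$, at the cost of invoking the weighted Fredholm machinery (which the paper in any case uses later, in lemma \ref{DiracBoundII} and its corollary); the paper's route is more elementary and self-contained but leaves the verification of the local expansions somewhat informal. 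The one point you rightly flag as delicate --- that the remainder in \ref{BehaviourAtEnds} is genuinely $O(r^{-2})$ rather than $O(r^{-1+\nu})$ or $O(r^{\alpha_1})$ with $\alpha_1$ determined by the first positive eigenvalue of $\Delta_{\Sigma_i}$ --- is not treated in any more detail in the paper's own proof, so your deferral to the asymptotic-expansion results of \cite{Mar02} is no weaker than what is printed.
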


The proof is an exercise in the calculus of variations, which below is hidden by the use of the Riesz representation theorem. Before the proof two lemmas are required. Let $V$ denote the space of functions $f \in L^1_{loc}(X)$, with $\nabla f \in L^2$. Given $g \in L^1_{loc} $, a function $u \in V$ is called a weak solution to $\Delta u = g$ if for all smooth compactly supported $\psi$ 
$$\langle du , d \psi \rangle_{L^2} = \int_X g \psi. $$
Recall that if $f: X \rightarrow \mathbb{R}$ is such that $\nabla f \in L^2$, then according to lemma \ref{LimitLemma} the uniform limit of $f \vert_{\varphi^{-1} ( \lbrace r \rbrace \times \Sigma_i )}$ along any end exists, is constant and will be denoted by $f\vert_{\Sigma_i}$ on each connected component $\Sigma_i$ of $\Sigma$.

\begin{lemma}\label{LinearProblem1}
Let $g \in L^{\frac{6}{5}}(X)$, then there is a unique weak solution $u_g \in V$ of 
\begin{eqnarray}\nonumber
\Delta u_g & = & g \\ \nonumber
u_g \vert_{\Sigma} & = & 0.
\end{eqnarray}
\end{lemma}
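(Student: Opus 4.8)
The statement asks for existence and uniqueness of a weak solution $u_g \in V$ to $\Delta u_g = g$ with vanishing trace on $\Sigma$, for $g \in L^{6/5}(X)$. Let me think about the right functional-analytic setup.

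We want $\Delta u = g$ weakly, meaning $\langle du, d\psi\rangle_{L^2} = \int_X g\psi$ for all compactly supported smooth $\psi$. The natural space to work in is the subspace $V_0 \subset V$ of functions whose trace $u|_\Sigma$ vanishes; by Lemma \ref{LimitLemma} every $f$ with $\nabla f \in L^2$ has a well-defined covariant-constant limit on each end, so $V_0$ is a well-defined closed condition. On $V_0$ the quantity $\|u\|_{V_0} := \|\nabla u\|_{L^2}$ is a genuine norm: if $\nabla u = 0$ then $u$ is constant, and the vanishing trace forces that constant to be zero. I expect $(V_0, \langle \nabla\,\cdot\,, \nabla\,\cdot\,\rangle_{L^2})$ to be a Hilbert space.

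The plan is to apply Riesz representation. Consider the linear functional $\psi \mapsto \int_X g\psi$ on $V_0$. To invoke Riesz I must show this is bounded, i.e. $|\int_X g\psi| \le C\|g\|_{L^{6/5}} \|\nabla\psi\|_{L^2}$. This is exactly where the exponent $6/5$ enters: by Hölder, $|\int_X g\psi| \le \|g\|_{L^{6/5}} \|\psi\|_{L^6}$, and then the Sobolev-type inequality from Lemma \ref{Kato} (the case $\alpha = 3$, giving $\|\psi\|_{L^6}^2 \le c_K\|\nabla_A\psi\|_{L^2}^2$, here with the trivial connection on functions) controls $\|\psi\|_{L^6}$ by $\|\nabla\psi\|_{L^2}$. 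So the functional is bounded on $V_0$, and Riesz produces a unique $u_g \in V_0$ with $\langle \nabla u_g, \nabla \psi\rangle_{L^2} = \int_X g\psi$ for all $\psi \in V_0$; since compactly supported smooth functions lie in $V_0$, this $u_g$ is the desired weak solution, and uniqueness within $V_0$ is immediate from the inner-product representation.

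Two points need care, and I expect the completeness/trace interplay to be the main obstacle. First, one must confirm that $V_0$ is actually complete under $\|\nabla\,\cdot\,\|_{L^2}$ and that the trace map $u \mapsto u|_\Sigma$ is continuous, so that the vanishing-trace condition is preserved under limits; this is where one genuinely uses the inequality in Lemma \ref{LimitLemma}, which bounds $\| |u| - u_{\Sigma_i}\|$ on each end by $\|\nabla_A u\|_{L^2}$ and thereby makes the trace a bounded functional. Second, one should verify that smooth compactly supported functions are dense in $V_0$, so that testing against them is equivalent to testing against all of $V_0$; this density, together with Lemma \ref{Kato}, is what lets the weak formulation and the Riesz identity coincide. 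Once these are in place the argument is essentially the standard variational (Lax--Milgram/Riesz) existence scheme, with all the noncompact-end subtleties absorbed into the two cited lemmas.
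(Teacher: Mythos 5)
Your proposal is correct and follows essentially the same route as the paper: the paper defines the Hilbert space $H$ directly as the completion of $C^\infty_0(X)$ in the Dirichlet inner product (which absorbs the completeness and density issues you flag), bounds the functional $f \mapsto \int_X gf$ by H\"older against $L^6$ plus the Sobolev inequality of Lemma \ref{Kato}, and applies Riesz representation. The only cosmetic difference is in uniqueness, where the paper integrates by parts along a sequence of radii $R_i \to \infty$ rather than appealing to density of test functions, but both arguments reduce to showing a harmonic $w$ with $dw \in L^2$ and vanishing trace is zero.
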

\begin{proof}
Those $f \in V$ such that $f\vert_{\Sigma_i}$ vanishes form an Hilbert space $H$, namely the completion of the smooth compactly supported functions in the inner product $\langle f_1 , f_2 \rangle_H = \int_X df_1 \wedge \ast df_2$, for $f_1 , f_2 \in C^{\infty}_0(X)$. To find a weak solution $u_g \in H$ one proves the linear functional $f \mapsto \int_X g f$ is bounded on $H$. This follows from
$$\vert \int_X g f \vert \leq \Vert g \Vert_{L^{\frac{6}{5}}} \Vert f \Vert_{L^6} \leq \Vert g \Vert_{L^{\frac{6}{5}}} \Vert f \Vert_{H},$$
where we used first H\"older's inequality and then the Sobolev inequality from lemma \ref{Kato}. The Riesz representation theorem gives an element $u_g \in H$ such that $\langle u_g , f \rangle_H = \int_X gf$, for all $f \in H$, i.e. $u_g$ is a weak solution to the problem. To prove uniqueness suppose there are two solutions $u,v \in H$. Then $w= u-v$ is an harmonic function in $H$. Moreover, since $w \in H$ we have $dw \in L^2$ and $w \vert_{\Sigma}=0$ and so there is a sequence $R_i \rightarrow \infty$ such that $ w \vert_{r^{-1}(R_i)} = o(R_i^{-1/2})$ and $dw \vert_{R_i}=o(R_i^{-3/2})$. So one can compute
\begin{eqnarray}\nonumber
0 & = & \int_X w \Delta w = - \int_X  \left(d(w\ast dw) - dw \wedge \ast dw \right) \\ \nonumber
& = & \lim_{i \rightarrow \infty} \int_{r^{-1}(R_i)} w \ast dw + \Vert w \Vert_{H}^2 \\ \nonumber
& = & \Vert w \Vert_{H}^2 ,
\end{eqnarray}
and conclude $w \in H$ with $dw=0$ and so $w=0$, i.e $u=v$.
\end{proof}

\begin{corollary}\label{LinearProblem2}
Given $m = (m_1 ,..., m_{l})\in \mathbb{R}^{l}$, there is a unique solution $u_m \in V$ of the problem
\begin{eqnarray}\nonumber
\Delta u_m & = & 0 \\ \nonumber
u_m \vert_{\Sigma_i} & = & m_i \ , \ i=1,..., l.
\end{eqnarray}
\end{corollary}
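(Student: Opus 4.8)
The plan is to reduce this inhomogeneous boundary-value problem to the homogeneous one already solved in Lemma \ref{LinearProblem1} by subtracting off a fixed function carrying the prescribed asymptotic values. First I would fix, for each end, a smooth cutoff $\chi_i : X \to \mathbb{R}$ which equals $1$ on $\{\rho > R\} \cap \varphi((1,\infty) \times \Sigma_i)$ for some large $R$ and is supported away from the compact core $K$ and from all other ends; then set $\chi_m = \sum_{i=1}^{l} m_i \chi_i$. By construction $\chi_m$ attains the constant value $m_i$ near the $i$-th end, so its uniform limit there is $m_i$, i.e. $\chi_m\vert_{\Sigma_i} = m_i$ in the sense of Lemma \ref{LimitLemma}. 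Since both $\nabla \chi_m$ and $\Delta \chi_m$ are supported in the compact transition region, one has $\nabla \chi_m \in L^2$ and $g := -\Delta \chi_m \in L^{6/5}(X)$ (indeed in every $L^p$).

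Next I would apply Lemma \ref{LinearProblem1} to this $g$ to obtain the unique weak solution $v \in V$ of $\Delta v = -\Delta \chi_m$ with $v\vert_{\Sigma} = 0$, and set $u_m := \chi_m + v$. Then $\Delta u_m = 0$ weakly, $\nabla u_m = \nabla \chi_m + \nabla v \in L^2$ so that $u_m \in V$, and since $v$ has vanishing limit at each end while $\chi_m\vert_{\Sigma_i} = m_i$, the boundary condition $u_m\vert_{\Sigma_i} = m_i$ holds. This establishes existence.

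For uniqueness, suppose $u, u'$ are two solutions. Their difference $w = u - u'$ is weakly harmonic, has $\nabla w \in L^2$, and satisfies $w\vert_{\Sigma_i} = 0$ for every $i$; by Lemma \ref{LimitLemma} this places $w$ in the Hilbert space $H$ of Lemma \ref{LinearProblem1}. The integration-by-parts argument proving uniqueness in that lemma then applies verbatim: choosing a sequence of radii $R_i \to \infty$ along which the boundary terms decay, one finds $\Vert w \Vert_H^2 = 0$, whence $dw = 0$ and, because $w\vert_{\Sigma} = 0$, $w \equiv 0$.

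I expect no serious obstacle here, since the hard analytic content (the Sobolev and Hölder estimate underlying solvability and the closedness of the image) has already been absorbed into Lemma \ref{LinearProblem1}. The only points requiring care are that the cutoff $\chi_m$ can be chosen with $\Delta \chi_m \in L^{6/5}$, which is immediate from its compact support, and that the prescribed "boundary value" must be interpreted through the uniform limit guaranteed by Lemma \ref{LimitLemma} rather than a classical trace. The corollary is thus essentially a bookkeeping argument reducing inhomogeneous to homogeneous boundary data.
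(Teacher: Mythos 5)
Your proposal is correct and follows essentially the same route as the paper: the paper likewise takes a smooth extension $g$ of the boundary data with $\Delta g \in L^{6/5}$, solves $\Delta \tilde{u} = \Delta g$ with vanishing limit via Lemma \ref{LinearProblem1}, and sets $u_m = g - \tilde{u}$; your cutoff $\chi_m$ is just an explicit choice of such an extension, and your uniqueness argument is the one the paper implicitly invokes.
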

\begin{proof}
Let $g \in C^{\infty}(X)$ with $g \vert_{\Sigma} =m$ be a smooth extension of $m$ to the whole $X$, such that $\Delta g \in L^{\frac{6}{5}}$. Then from lemma \ref{LinearProblem1} one concludes that there is a unique solution $\tilde{u} \in H$ to the problem $\Delta \tilde{u}= \Delta g$. Then the solution $u_m$ is given by setting $u_m = g- \tilde{u}$.
\end{proof}

\begin{corollary}\label{LinearProblem3}
Let $g \in L^{\frac{6}{5}}(X)$ and $m = (m_1 ,..., m_{l})\in \mathbb{R}^{l}$, there is a unique solution $u \in V$ of the problem
\begin{eqnarray}\nonumber
\Delta u & = & g \\ \nonumber
u \vert_{\Sigma_i} & = & m_i \ , \ i=1,..., l.
\end{eqnarray}
\end{corollary}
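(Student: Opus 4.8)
The plan is to solve the problem by superposition, splitting it into the two pieces already settled by Lemma \ref{LinearProblem1} and Corollary \ref{LinearProblem2}. First I would invoke Corollary \ref{LinearProblem2} to produce the unique $u_m \in V$ with $\Delta u_m = 0$ and $u_m\vert_{\Sigma_i} = m_i$, and Lemma \ref{LinearProblem1} to produce the unique $u_g \in H \subset V$ with $\Delta u_g = g$ and $u_g\vert_{\Sigma} = 0$. Setting $u = u_m + u_g$ and using the linearity of $\Delta$ together with the linearity of the boundary-trace map $f \mapsto (f\vert_{\Sigma_i})_i$, which is well defined on all of $V$ by Lemma \ref{LimitLemma}, one obtains $\Delta u = \Delta u_m + \Delta u_g = g$ and $u\vert_{\Sigma_i} = m_i + 0 = m_i$. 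Since $\nabla u_m, \nabla u_g \in L^2$ we have $u \in V$, so $u$ is the desired solution.

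For uniqueness, suppose $u, v \in V$ both solve the problem. Then $w = u - v$ satisfies $\Delta w = 0$ weakly and $w\vert_{\Sigma_i} = 0$ for every $i$. Because $u, v \in V$ we have $\nabla w \in L^2$, and this, combined with the vanishing of all boundary traces, places $w$ in the Hilbert space $H$ from the proof of Lemma \ref{LinearProblem1}. The integration-by-parts argument given there applies verbatim: choosing a sequence $R_i \to \infty$ along which $w\vert_{r^{-1}(R_i)} = o(R_i^{-1/2})$ and $dw\vert_{R_i} = o(R_i^{-3/2})$ makes the boundary flux term vanish in the limit, forcing $\Vert w \Vert_H^2 = 0$. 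Hence $dw = 0$ and, since $w\vert_{\Sigma} = 0$, we conclude $w = 0$, i.e. $u = v$.

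This corollary is essentially a bookkeeping statement, so I do not expect a genuine obstacle. The only points needing a little care are that the trace map is linear and well defined on the whole of $V$ — which is precisely the content of Lemma \ref{LimitLemma} — and that the uniqueness claim reduces cleanly to the homogeneous case of Lemma \ref{LinearProblem1} rather than demanding a fresh energy estimate; both reductions are immediate from linearity.
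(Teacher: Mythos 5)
Your proposal is correct and follows exactly the paper's own argument: superposing the solution $u_g$ from Lemma \ref{LinearProblem1} with $u_m$ from Corollary \ref{LinearProblem2}, and reducing uniqueness to the homogeneous integration-by-parts argument of Lemma \ref{LinearProblem1}. Your write-up is in fact slightly more careful than the paper's, since you explicitly note that the trace map is linear and defined on all of $V$ via Lemma \ref{LimitLemma}.
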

\begin{proof}
Let $u_g \in H \subset V$ be the solution to $\Delta u_g = g$, $u_g \vert_{\Sigma}=0$ given by lemma \ref{LinearProblem1} and let $u_m \in V$ be the solution to $\Delta u_m =0$, $u_m \vert_{\Sigma} =m$ given by corollary \ref{LinearProblem2}. Then $u=u_g + u_m$ is the desired solution and the uniqueness follows from a similar argument as the one used in the proof of lemma \ref{LinearProblem1}.
\end{proof}

\begin{proof}(of proposition \ref{LinearProposition})
There is a $1$-parameter family of smoothings of the current $\delta$ represented by smooth $3$-forms $\delta^{\epsilon}dvol$, such that $\delta(f) = \lim_{\epsilon \rightarrow 0} \int_X \delta^{\epsilon} f$, for all $f \in C^{\infty}_0(X)$. The $\delta^{\epsilon}$ can be chosen to have uniformly bounded $L^1$-norm, i.e. $\Vert \delta^{\epsilon} \Vert_{L^1} = \int_X \vert \delta^{\epsilon} \vert dvol_X =k$, and to be supported on small $\epsilon >0$ balls around the points $p_i$. At this point it must be remarked that the original distribution $\delta$ does not make sense as an element of $H^*$, as elements of $H$ need not be bounded. However, it does makes sense as a current, i.e. $\delta \in (C^{\infty}_0(X))^*$ as $\Vert \delta^{\epsilon} \Vert_{L^1} =k$ is bounded independently of $\epsilon$ and so
$$ \vert \delta(f) \vert = \Big\vert \lim_{\epsilon \rightarrow 0} \int_X \delta^{\epsilon} f \Big\vert \leq k \Vert f \Vert_{L^{\infty}}.$$
The trick now is to understand that for each $\epsilon$ the norm $\Vert \delta^{\epsilon} \Vert_{L^{\frac{6}{5}}}$ is still bounded, however not independently of $\epsilon$. Then corollary \ref{LinearProblem3} gives a family of functions $\phi_{D}^{\epsilon} \in V$, weakly solving
\begin{eqnarray}\nonumber
\Delta \phi_{D}^{\epsilon} & = & \delta^{\epsilon} \\ \nonumber
\phi_{D}^{\epsilon} \vert_{\Sigma_i} & = & m_i \ , \ i=1,..., l,
\end{eqnarray}
and with $\phi_D^{\epsilon}$ unique for each $\epsilon$. Since the $\delta^{\epsilon}$ are smooth, elliptic regularity guarantees that so are the $\phi_{D}^{\epsilon}$. However it must be remarked that the norm $\Vert \phi_{D}^{\epsilon} \Vert_H$ is not uniformly bounded independently of $\epsilon$. For $f \in C^{\infty}_0(X)$
$$\delta(f)= \lim_{\epsilon \rightarrow 0} \int_X \delta^{\epsilon} f = \lim_{\epsilon \rightarrow 0} \langle \phi_{D}^{\epsilon} , f \rangle_H,$$
and since for all $\epsilon$ we have $ \langle \phi_{D}^{\epsilon} , f \rangle_H =\int_X \delta^{\epsilon} f \leq k\Vert f \Vert$, the weak limit as $\epsilon \rightarrow 0$ of the $\phi^D_{\epsilon}$ exists and gives a current $\phi^D$ weakly solving $\Delta \phi^D = \Delta \delta$. This current is represented by an unbounded function which we still denote by $\phi^D$ such that the integral $\int_X f \phi^D=\lim_{\epsilon \rightarrow 0} \int_X \phi_{D}^{\epsilon} f$ is well defined for all $f \in C^{\infty}_0(X)$. As the $L^1$-norm of the $\delta^{\epsilon}$ is bounded independently of $\epsilon$ and $\Delta \phi_D^{\epsilon}=0$ outside an $\epsilon$-neighborhood of the $p_i$'s one respectively concludes that $\phi^D \in L^1_{loc}$ and is smooth away from the $p_i$'s.\\
Moreover, as the metric is asymptotically conical $\phi_D$ behaves as \ref{BehaviourAtEnds} at each end. Locally on small balls $U_{p_i}$ around each $p_i$ the metric is approximately Euclidean, so that on these \ref{BehaviourAtPoints} holds. The last thing to be checked is the identity $\vert k^I \vert = \vert k^O \vert$, this follows from integrating
\begin{eqnarray}\nonumber
0 & = & \int_{X \backslash \lbrace p_1,...,p_{\sharp-pts.} \rbrace} \Delta \phi^D dvol = - \int_{X \backslash \lbrace p_1,...,p_{\sharp-pts.} \rbrace} d\ast d \phi^D \\ \nonumber
& = & \lim_{\epsilon \rightarrow 0} \sum_{i=1}^{\sharp-pts.} \int_{\partial B_{\epsilon}(p_i)}\ast d \phi^D  - \lim_{r \rightarrow \infty} \sum_{i=1}^{\sharp-ends.} \int_{\Sigma_{i}^r}\ast d \phi^D  \\ \nonumber
& = &  \sum_{i=1}^{\sharp-pts.} k^I_i  -  \sum_{i=1}^{\sharp-ends.} k^O_i,
\end{eqnarray}
where we use Stokes' theorem and the local behavior of $\phi^D$ both at the singular points $p_i$ and at the ends $\Sigma_i$.
\end{proof}

\begin{remark}
The distribution $\delta$ can be extended from the smooth compactly supported functions to those $f$ which are smooth and bounded. Then, the second Green's identity gives
\begin{equation*}
\delta (f) = \left\{
\begin{array}{rl}
&  \int_X  d \phi_D \wedge \ast d f , \\
&  \int_X  f \Delta \phi_D + \int_{\Sigma} f \ast d\phi_D ,
\end{array} \right.
\end{equation*}
where the integrals involving $\phi^D$ can be interpreted as the corresponding limits as $\epsilon \rightarrow 0$. Since, $\ast d \phi_D \vert_{\Sigma_i} = \frac{k^O_i}{Vol(\Sigma_i)}dvol_{\Sigma_i}$,
$$\int_{\Sigma} f \ast d\phi_D= \sum_{i=1}^{b_0(\Sigma)} k^O_i  \frac{1}{Vol(\Sigma_i)} \int_{\Sigma_i}f ,$$
moreover if $df \in L^2$, then $f$ converges to a constant by lemma \ref{LimitLemma} and the formula above simplifies to $\int_{\Sigma} f \ast d\phi_D= \sum_{i=1}^{b_0(\Sigma)} k_i^O f \vert_{\Sigma_i}$. One still needs to show $\Delta \phi_D = \sum_{i=1}^{\sharp-pts.} \delta(p_i)$, but this can be achieved by the limit $\lim_{\epsilon \rightarrow 0} \Delta \phi^{\epsilon}_D$ this can be seen to converge uniformly to zero outside the $p_i$'s leaving $\sum_{i=1}^{\sharp-pts.} \delta_{p_i}$. To check that $\Delta \phi_D$ does not get any contribution at $\infty$ one needs to use the fact that the metric is asymptotically conical and so at the $i$-th end
$$\phi^{\epsilon}_D = m_i -  \frac{k^O_i}{Vol(\Sigma_i)} \frac{1}{\sqrt{r^2 + \epsilon^2}} + o(r^{-2}),$$
one computes
\begin{eqnarray}
\Delta \phi^{\epsilon}_D & = & \frac{\partial^2  \phi^{\epsilon}_D}{\partial r^2} + \frac{2}{r} \frac{\partial \phi^{\epsilon}_D}{\partial r} + ... \\ \nonumber
& = & \frac{ k^O_i }{ Vol(\Sigma_i) } \frac{3\epsilon^2}{ (r^2 + \epsilon^2)^{\frac{5}{2}} } + ...
\end{eqnarray}
and so the integration of $\Delta \phi^{\epsilon}_D$ times a bounded function is finite and then converges to zero as $\epsilon \rightarrow 0$. Inserting all this information one concludes that $\delta$ extends as
\begin{equation*}
\delta (f) =  \sum_{i=1}^{\sharp-pts} k^{I}_i f(p_i) - \sum_{i=1}^{\sharp-ends} k^O_i  \frac{1}{Vol(\Sigma_i)} \int_{\Sigma_i}f.
\end{equation*}
\end{remark}

The next goal is to construct a line bundle with connection on $U = X \backslash \lbrace p_i \rbrace_{i=1}^{\sharp-pts.}$

\begin{proposition}\label{2Form}
Let $U = X \backslash \lbrace p_i \rbrace_{i=1}^{\sharp-pts.}$ and $F_D = \ast d \phi_D$. Then $F_D$ is a closed $2$-form on $U$ and the class $[F_D] \in H^2(U, \mathbb{R/Z})$ is uniquely determined by a class $[F_X] \in  H^2(X, \mathbb{R/Z})$. Moreover, the $F_D$ is the curvature of a connection on a line bundle over $U$ if and only if the class $[F_X]$ vanishes.
\end{proposition}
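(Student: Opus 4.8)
The plan is to dispatch the three assertions in turn: closedness is immediate from harmonicity, the middle statement rests on a Mayer--Vietoris computation with $\mathbb{R}/\mathbb{Z}$ coefficients, and the last assertion is a formal consequence of the injectivity of a restriction map.

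First, closedness. On $U$ the function $\phi_D$ is smooth and harmonic by Proposition \ref{LinearProposition}, so $\Delta\phi_D=0$ there, which in three dimensions reads $d\ast d\phi_D=0$; hence $dF_D=0$ and $F_D$ defines a de Rham class in $H^2(U,\mathbb{R})$. I would then record the periods of $F_D$ over the $2$-spheres linking the punctures. Using the local expansion \ref{BehaviourAtPoints}, $\phi_D=-k_i^I/(2r)+O(1)$ near $p_i$, and $\ast dr=r^2\,dvol_{S^2}$ in the approximately Euclidean coordinates there, the leading term gives $\ast d\phi_D=\tfrac{k_i^I}{2}\,dvol_{S^2}+\dots$, so that $\int_{S^2_\epsilon(p_i)}F_D=2\pi k_i^I$; since $F_D$ is closed this period is independent of $\epsilon$ and equals this integer multiple of $2\pi$. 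Consequently the reduction of $\tfrac{1}{2\pi}[F_D]$ modulo $\mathbb{Z}$, which is the class denoted $[F_D]\in H^2(U,\mathbb{R}/\mathbb{Z})$, restricts to zero on every linking sphere.

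Next, the identification with a class on $X$. Write $X=U\cup B$ with $B=\sqcup_i B_i$ a disjoint union of small balls around the $p_i$, so that $U\cap B\simeq\sqcup_i S^2$. Since $H^1(\sqcup S^2,\mathbb{R}/\mathbb{Z})=0$ and $H^2(B,\mathbb{R}/\mathbb{Z})=0$, the Mayer--Vietoris sequence with $\mathbb{R}/\mathbb{Z}$ coefficients reduces to the exact sequence
$$0 \to H^2(X,\mathbb{R}/\mathbb{Z}) \xrightarrow{\; r \;} H^2(U,\mathbb{R}/\mathbb{Z}) \xrightarrow{\; s \;} \bigoplus_i H^2(S^2,\mathbb{R}/\mathbb{Z}),$$
where $r$ is restriction from $X$ (hence injective) and $s$ is restriction to the linking spheres. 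The previous step shows $s[F_D]=0$, so by exactness $[F_D]=r[F_X]$ for some $[F_X]\in H^2(X,\mathbb{R}/\mathbb{Z})$, and injectivity of $r$ makes $[F_X]$ unique. This is the second assertion.

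Finally, the third assertion is formal: a closed real $2$-form is the curvature of a connection on a complex line bundle over $U$ precisely when its periods are integral, i.e.\ when $\tfrac{1}{2\pi}[F_D]$ lies in the image of $H^2(U,\mathbb{Z})\to H^2(U,\mathbb{R})$, which by the Bockstein sequence associated with $0\to\mathbb{Z}\to\mathbb{R}\to\mathbb{R}/\mathbb{Z}\to0$ is equivalent to $[F_D]=0$ in $H^2(U,\mathbb{R}/\mathbb{Z})$. By injectivity of $r$ this holds if and only if $[F_X]=0$, giving the stated equivalence. The only points demanding care are the Mayer--Vietoris bookkeeping with $\mathbb{R}/\mathbb{Z}$ coefficients — in particular verifying $H^1(S^2,\mathbb{R}/\mathbb{Z})=0$ so that $r$ is genuinely injective — and the correct normalization of $F_D$ ensuring that the linking-sphere periods are integers; once these are pinned down, the rest follows mechanically.
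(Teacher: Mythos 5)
Your proof is correct and follows essentially the same route as the paper's: closedness from harmonicity of $\phi_D$, integrality of the periods on the linking spheres, a Mayer--Vietoris (equivalently, pair-sequence) argument with $\mathbb{R}/\mathbb{Z}$ coefficients to identify $[F_D]$ with the restriction of a unique class $[F_X]$ on $X$, and the standard prequantization criterion for the final equivalence. Your explicit tracking of the $2\pi$ normalization of the periods is if anything slightly more careful than the paper's.
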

\begin{proof}
Let $R$ be one of the following Abelian Groups $\mathbb{Z}, \mathbb{R}, \mathbb{R/Z}$. Since $H^1(U_{p_i} \backslash \lbrace p_i \rbrace)$, $H^2(U_{p_i})$ and $H^3(X)$ all vanish, the long exact sequence for the pair $U=X \backslash \lbrace p_i \rbrace_{i=1}^{\sharp-pts}$, $\cup_{i=1}^{\sharp-pts} U_{p_i}$, gives
$$0 \rightarrow H^2(X,R) \rightarrow H^2(U,R) \rightarrow \bigoplus_{i=1}^{\sharp-pts} H^2(U_{p_i} \backslash \lbrace p_i \rbrace , R) \rightarrow 0.$$
The $2$-form $F_D =  \ast d \phi_D$ is closed in $U$, since $dF_D = -\ast \Delta \phi^D$ which vanishes in $U$. It represents a class in $H^2(U,\mathbb{R})$ and is the curvature of a connection on a line bundle over $U$ if and only if this class has integer periods, equivalently if $[F_D]$ vanishes in $H^2(U,\mathbb{R/Z})$. In fact, from the definition of $\phi_D$ one knows that the image of $[F_D]$ in $H^2(U_{p_i} \backslash \lbrace p_i \rbrace, \mathbb{R/Z})$ vanishes, since
$$\int_{\partial \overline{U}_{p_i}} F_D = \int_{U_{p_i}} dF_D = \int_{U_{p_i}} \delta = k_i^{I} \in \mathbb{Z}.$$
So by exactness of the Mayer-Vietoris sequence, the class $[F_D] \in H^2(U, \mathbb{R/Z})$ is the image of a class in $[F_X] \in H^2(X, \mathbb{R/Z})$ and for $F_D$ to be the curvature of a connection on a line bundle over $U$ one just needs $[F_X]$ to vanish as well. 
\end{proof}

\begin{definition}\label{bMap}
\begin{enumerate}
\item Define the action of $R$ on $H^{0}(\Sigma, R) \cong R^{b_0(\Sigma)}$ given by
\begin{equation}\label{eq:Raction}
c.(m_1,...,m_{b_0(\Sigma)}) = (m_1+c,...,m_{b_0(\Sigma)}+c),
\end{equation}
for $c \in R$ and let $H^{0}(\Sigma, R)/ R$ denote the space of $R$-equivalence classes.

\item Define the map 
\begin{eqnarray}
b : X^{k} \times H^{0}(\Sigma, \mathbb{R})/ \mathbb{R} & \rightarrow & H^2(X, \mathbb{R/Z}) \\ \nonumber
(p, [m]) & \mapsto & [F_X],
\end{eqnarray}
where $[F_X]$ is the class determined via proposition \ref{2Form}. Moreover, fixing the mass $m$ denote by $b_{[m]}$ the map that to $p \in X^k$ assigns $b_{[m]}(p)= b(p, [m]) \in H^2(X, \mathbb{R/Z})$.
\end{enumerate}
\end{definition}

The map $b$ above is indeed well defined, since changing the mass 
$$m=(m_1,...,m_{b_0(\Sigma)})$$
by an overall constant amounts to add this constant to $\phi_D$. Hence $F_D= \ast d\phi_D$ remains unchanged and proposition \ref{2Form} can be rephrased as

\begin{corollary}\label{cor:mapb}
Given $(p, [m]) \in X^k \times H^0(\Sigma, \mathbb{R})/ \mathbb{R}$, these determine a line bundle $L$ over $U = X \backslash \lbrace p_1 , ..., p_k \rbrace $ with connection $\nabla_D$ and curvature $F_D= \ast d \phi_D$ if and only if $b(p, [m]) =0$. 
\end{corollary}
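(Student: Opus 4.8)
The plan is to read the statement directly off Proposition \ref{2Form} together with Definition \ref{bMap}, the only genuine verification being that the construction is insensitive to the $\mathbb{R}$-action on the mass. First I would recall the standard integrality criterion: a closed real $2$-form is the curvature of a connection on a complex line bundle precisely when all of its periods are integers, equivalently when its image in $H^2(\,\cdot\,,\mathbb{R}/\mathbb{Z})$ under the coefficient map $H^2(\,\cdot\,,\mathbb{Z})\to H^2(\,\cdot\,,\mathbb{R})\to H^2(\,\cdot\,,\mathbb{R}/\mathbb{Z})$ vanishes. Applying this to $F_D=\ast d\phi_D$ on $U$, the pair $(p,[m])$ determines a line bundle $L$ with connection $\nabla_D$ of curvature $F_D$ if and only if $[F_D]=0$ in $H^2(U,\mathbb{R}/\mathbb{Z})$.

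Next I would invoke Proposition \ref{2Form}, which exhibits $[F_D]$ as the image of a unique class $[F_X]\in H^2(X,\mathbb{R}/\mathbb{Z})$ under the map $H^2(X,\mathbb{R}/\mathbb{Z})\to H^2(U,\mathbb{R}/\mathbb{Z})$ occurring in the short exact sequence of that proposition. Since $H^2(X,R)$ is precisely the kernel of restriction to the punctured neighborhoods, this map is injective, so $[F_D]=0$ if and only if $[F_X]=0$. By Definition \ref{bMap} the class $[F_X]$ is by construction $b(p,[m])$, and combining the two equivalences yields that the existence of $(L,\nabla_D)$ with curvature $F_D$ is equivalent to $b(p,[m])=0$, which is the assertion.

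Finally, to see that the statement is well posed on $X^k\times H^0(\Sigma,\mathbb{R})/\mathbb{R}$ rather than on $X^k\times H^0(\Sigma,\mathbb{R})$, I would check that the $\mathbb{R}$-action \ref{eq:Raction} leaves $F_D$ untouched: shifting $(m_1,\dots,m_l)$ by a common constant $c$ shifts the boundary data in Corollary \ref{LinearProblem3} by $c$, and since constants are harmonic the uniqueness there forces $\phi_D\mapsto\phi_D+c$; hence $F_D=\ast d\phi_D$, and therefore $[F_X]=b(p,[m])$, are unchanged. This confirms that $b$ descends to the quotient and that the equivalence is independent of the chosen representative of $[m]$.

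I expect no serious obstacle here, since all the hard analysis (existence, uniqueness, and asymptotics of $\phi_D$) and the topological input (injectivity of $H^2(X)\to H^2(U)$) are already furnished by Propositions \ref{LinearProposition} and \ref{2Form}; the corollary is essentially a repackaging. The one point that genuinely demands an argument is the invariance of $F_D$ under the overall mass shift, which is exactly what makes $b$ well defined on $H^0(\Sigma,\mathbb{R})/\mathbb{R}$ and hence makes the stated equivalence meaningful.
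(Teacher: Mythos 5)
Your proposal is correct and follows essentially the same route as the paper: the equivalence is read off from Proposition \ref{2Form} (integrality of periods, injectivity of $H^2(X,\mathbb{R}/\mathbb{Z})\to H^2(U,\mathbb{R}/\mathbb{Z})$ from the exact sequence there), and the only point requiring verification is that an overall shift of the mass by $c$ shifts $\phi_D$ by $c$ and hence leaves $F_D=\ast d\phi_D$, and therefore $b(p,[m])$, unchanged — which is exactly the remark the paper makes before stating the corollary.
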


\begin{remark}
To make a connection with Hitchin's point of view in \cite{Hitchin99} one must regard the formal sum of points $\sum_{i=1}^k p_i$ as a sort of divisor on the $3$ dimensional Riemannian manifold $(X,g)$. This divisor can be equivalently regarded as a flat gerbe, since $H^3(X, \mathbb{Z})=0$ as $X$ is noncompact. The holonomy of the flat connection represents a class in $H^2(X, \mathbb{R/Z})$ and then one has the following equivalent statements. $1.$ The holonomy vanishes. $2.$ The flat gerbe is trivial. $3$ The divisor is linearly trivial. $4$ There is a Dirac monopole on $U$.
\end{remark}

\begin{theorem}\label{DiracMonopole}
Let $m \in H^{0}(\Sigma, \mathbb{R})$, then for each 
$$(p, \alpha) \in  b_{[m]}^{-1}(0)  \times H^{1}( X, \mathbb{S}^1)$$
there is a reducible $SU(2)$ bundle $L \oplus L^{-1}$ over $X \backslash \lbrace p_1,...,p_{k} \rbrace$ equipped with a charge $k$ and mass $m$ Dirac monopole $(A_D, \Phi_D)$. In particular, if $b_2 =0$, then the map $b$ vanishes and there is such a Dirac monopole for all $p \in X^k$.\\
Moreover, for any two different $(p, \alpha)$ and $(p', \alpha')$ the Dirac monopoles associated with these are not related by any gauge transformation $g \in C^{\infty}(X, \mathbb{S}^1)$.
\end{theorem}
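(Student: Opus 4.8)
The plan is to build the Dirac monopole directly from the harmonic function $\phi_D$ of Proposition \ref{LinearProposition} and then to treat construction and uniqueness separately. First I would invoke Corollary \ref{cor:mapb}: the hypothesis $(p,[m]) \in b_{[m]}^{-1}(0)$ provides a complex line bundle $L \to U = X \setminus \{p_1,\dots,p_k\}$ carrying a connection $\nabla_D$ with curvature $F_D = \ast d\phi_D$. The class $\alpha \in H^1(X,\mathbb{S}^1)$ then enters as a twist: since removing the codimension-three set $\{p_i\}$ leaves $H^1$ unchanged, $H^1(X,\mathbb{S}^1) \cong H^1(U,\mathbb{S}^1)$, so $\alpha$ determines a flat line bundle with flat connection on $U$, and tensoring produces a connection $\nabla_D^\alpha$ with the same curvature $F_D$ but prescribed holonomy. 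I would then set $A_D$ to be the reducible $SU(2)$ connection $\nabla_D^\alpha \oplus (\nabla_D^\alpha)^\ast$ on $L \oplus L^{-1}$ and take $\Phi_D = \phi_D\,\xi$, where $\xi$ is the unit-norm generator of the diagonal $\mathfrak{u}(1) \subset \mathfrak{su}(2)$ determined by the reduction.

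The verification that $(A_D,\Phi_D)$ solves \eqref{eq:monopole} is then purely abelian. Because $A_D$ and $\Phi_D$ are simultaneously diagonal, all bracket terms vanish, $\nabla_{A_D}\Phi_D = (d\phi_D)\,\xi$ and $F_{A_D} = F_D\,\xi$, so the equation $\ast F_{A_D} = \nabla_{A_D}\Phi_D$ reduces to the identity $\ast F_D = \ast\!\ast d\phi_D = d\phi_D$, valid since $\ast\ast = \mathrm{id}$ on $1$-forms in dimension three. The mass and charge are read off from the asymptotics of $\phi_D$: \eqref{BehaviourAtEnds} gives $|\Phi_D| = |\phi_D| \to m_i$ on the $i$-th end, so the mass is $m$, while \eqref{BehaviourAtPoints} together with the flux identity $\sum_i k_i^I = \sum_i k_i^O$ from Proposition \ref{LinearProposition} gives total charge $k$. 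For the special case $b^2(X) = 0$ I would run the long exact sequence attached to $0 \to \mathbb{Z} \to \mathbb{R} \to \mathbb{R}/\mathbb{Z} \to 0$: the segment $H^2(X,\mathbb{R}) \to H^2(X,\mathbb{R}/\mathbb{Z}) \to H^3(X,\mathbb{Z})$ has vanishing outer terms (the first by $b^2=0$, the second since a connected noncompact $3$-manifold has $H^3(X,\mathbb{Z})=0$), so $H^2(X,\mathbb{R}/\mathbb{Z})=0$, the map $b$ is identically zero, and every $p \in X^k$ lies in $b_{[m]}^{-1}(0)$.

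For the distinctness statement the key observation is that the pair $(F_{A_D}, \Phi_D)$ is fully invariant under $\mathbb{S}^1$-gauge transformations: abelian curvature is gauge invariant, and $\Phi_D$, being valued in the diagonal, commutes with $g = \mathrm{diag}(g_0, g_0^{-1})$ and is therefore fixed. Hence if the monopoles attached to $(p,\alpha)$ and $(p',\alpha')$ are related by $g \in C^\infty(X,\mathbb{S}^1)$, their Higgs fields coincide, so $\phi_D = \phi_{D'}$; the singular set and the blow-up rates $k_i^I$ in \eqref{BehaviourAtPoints} then recover the divisor $\sum_i p_i$ (forcing $p = p'$), while the end asymptotics recover the mass. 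With $p = p'$ fixed, the two connections share the curvature $F_D$, so they differ by a flat connection, and they are $\mathbb{S}^1$-gauge equivalent precisely when that flat connection is gauge-trivial, i.e. when $\alpha - \alpha'$ vanishes in $H^1(U,\mathbb{S}^1) \cong H^1(X,\mathbb{S}^1)$.

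The main obstacle will be this last step: correctly identifying the gauge-equivalence classes of connections with fixed curvature $F_D$ on $U$ with $H^1(U,\mathbb{S}^1)$ while keeping track of the ``large'' gauge transformations of nontrivial winding class in $H^1(X,\mathbb{Z})$, which shift the flat part by integral periods and are exactly what is quotiented out in passing from $H^1(X,\mathbb{R})$ to $H^1(X,\mathbb{S}^1)$. Establishing $H^1(X,\mathbb{S}^1) \cong H^1(U,\mathbb{S}^1)$ — so that distinct $\alpha$ remain distinct after restriction to $U$, and so that gauge transformations defined on all of $X$ see the same holonomy classes as those on $U$ — is what makes the parameter $\alpha$ genuinely injective rather than redundant.
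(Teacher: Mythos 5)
Your proposal is correct and follows essentially the same three-step route as the paper: obtain $(L,\nabla_D)$ with curvature $\ast d\phi_D$ from Corollary \ref{cor:mapb}, twist by the flat line bundle determined by $\alpha$, and promote to the reducible $SU(2)$ pair on $L\oplus L^{-1}$ with $\Phi_D=\phi_D\oplus 0$. You in fact supply more detail than the paper on the abelian verification, the vanishing of $H^2(X,\mathbb{R}/\mathbb{Z})$ when $b^2(X)=0$, and the gauge-inequivalence of distinct parameters (which the paper dismisses as obvious), so no gaps to report.
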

\begin{proof} This has $4$ steps.
\begin{enumerate}
\item From proposition \ref{LinearProposition} one constructs the function $\phi_D$, which determines a $2$ form via $F_D=\ast d \phi_D$. Then, from proposition \ref{2Form} or equivalently corollary \ref{cor:mapb} one knows that the two form $F_D$ is the curvature of a connection $\nabla_D$ on a line bundle $L$ over $X \backslash \lbrace p_1,...,p_{k} \rbrace$ if and only if $b(p,[m])=0$.\\
We also note that the connections $\nabla_D$ associated with different tuples of points are obviously gauge inequivalent and may possibly live in different line bundles.
%

\item A class $\alpha \in H^1(X, \mathbb{S}^1)$ represents a gauge equivalence class of flat connections $\nabla^F$ on a torsion line bundle $L^F$ over $X$. One can now twist this with the Dirac monopole without changing the Higgs field. This gives the line bundle $L \otimes L^F$, which one still denotes by $L$, equipped with the connection $\nabla = \nabla_D \otimes 1 + 1 \otimes \nabla^F$. The curvature of this connection remains unchanged and so it still satisfies the Bogomolnyi equation $\ast F_D = d\phi_D$.

\item To finish the proof increase the gauge group to $SU(2)$, either by equipping $L$ with a metric and considering the principal bundle $P= \mathbb{S}^1(L) \times_{U(1)} SU(2)$, or by considering the vector bundle $L \oplus L^{-1}$, associated with $P$ through the standard representation of $SU(2)$ in $\mathbb{C}^2$. Then, the adjoint bundle is $\mathfrak{g}_P = \underline{\mathbb{R}} \oplus L^2$. Define the connection $A_D = \nabla_D \oplus (-\nabla_D)$ on $E$ and Higgs field $\Phi_D = \phi_D \oplus 0$, these do satisfy the monopole equations on $U$ and have Dirac type singularities at the points $p_i$, for $i=1,...,k$.
\end{enumerate}
\end{proof}

\begin{remark}
The automorphism group of the pair $(A_D, \Phi_{D})$ constructed above on $X \backslash \cup_{i=1}^k \lbrace p_i \rbrace$ is isomorphic to $\mathbb{S}^1$. In fact, given $e^{i a} \in \mathbb{S}^1$, we have $e^{i a \Phi_D} \cdot (A_D,\Phi_D)=(A_D, \Phi_D)$.
\end{remark}

\subsection{The Approximate Solution}\label{ApproximateSolution}

This section constructs an approximate solution to the monopole equations in proposition \ref{prop:AproximateSolution}, whose error term is estimated in lemma \ref{prop:ErrorEstimate}. Recall the map $b$ from definition in \ref{bMap} and theorem \ref{DiracMonopole} which proves that if $p=(p_1,...,p_{k}) \in X^{k}$ and $m \in H^{0}(\Sigma, \mathbb{R})$ are such that $b(p,[m])=0$, then one can construct a reducible Dirac Monopole $(A_D,\Phi_D)$ on an $SU(2)$-bundle $L \oplus L^{-1}$ over $X \backslash \lbrace p_1,...,p_{k} \rbrace$. Before proceeding to the construction of an approximate solution it is helpful to show how one can make this Dirac monopole have very large Higgs field in a big open set in $X$.

\begin{lemma}\label{MinimumDirac}
Let $[m] \in H^0(\Sigma, \mathbb{R})/ \mathbb{R}$ be an $\mathbb{R}$-equivalence class. Then, there is $\mu \in \mathbb{R}$ (depending on the metric $g$ and the class $[m]$), with the following property. Let $\phi_D$ the Higgs field constructed in proposition \ref{LinearProposition} associated with a representative $m \in [m]$ with average mass $m_0(m) \geq \mu$. Then, there is $\epsilon = \sqrt{ \frac{10}{8m_0}}$ 
$$ \phi^D \geq \frac{m_0}{2},$$
on $U_{\epsilon} = X \backslash \cup_{i=1}^{\sharp-pts.} \overline{B_{\epsilon} (p_i)}$.
\end{lemma}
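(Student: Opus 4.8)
The plan is to use that $\phi_D$ is genuinely harmonic on the open set $U_\epsilon$ (it has no sources there and nothing is happening at the ends), so by the minimum principle its infimum is controlled by the two pieces of the boundary of $U_\epsilon$: the small spheres $\partial B_\epsilon(p_i)$ and the conical ends. First I would record the decomposition already implicit in the proof of Proposition \ref{LinearProposition}, namely $\phi_D = u_m + u_\delta$, where $u_m$ is the harmonic function with $u_m|_{\Sigma_i}=m_i$ from Corollary \ref{LinearProblem2} and $u_\delta$ solves $\Delta u_\delta = \delta$ with $u_\delta|_\Sigma = 0$ from Lemma \ref{LinearProblem1}. The virtue of separating these is that $u_\delta$ is \emph{independent} of the representative $m \in [m]$, whereas $u_m$ carries all the $m_0$-dependence: writing $m = m_0\mathbf{1} + w$ with $\sum_i w_i = 0$ and using that the constant $m_0$ is its own harmonic extension, linearity and uniqueness give $u_m = m_0 + u_w$, with $u_w$ harmonic of boundary values $w_i = m_i - m_0$. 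The maximum principle then yields $|u_w| \le \sigma_{[m]} := \max_i |m_i - m_0|$, a quantity invariant under the $\mathbb{R}$-shift and hence depending only on $[m]$ and $g$, not on $m_0$.

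Next I would run the minimum principle on the noncompact domain $U_\epsilon$. Exhausting it by $U_\epsilon \cap \{\rho \le R\}$ and letting $R \to \infty$, while using $\phi_D \to m_i$ along the $i$-th end (see \eqref{BehaviourAtEnds}), gives
\[
\inf_{U_\epsilon}\phi_D \ \ge\ \min\Big(\ \min_i m_i,\ \ \inf_{\partial U_\epsilon}\phi_D\ \Big).
\]
For the ends, $m_i = m_0 + w_i \ge m_0 - \sigma_{[m]}$, so $\min_i m_i \ge m_0/2$ once $m_0 \ge 2\sigma_{[m]}$. For the spheres the key geometric observation is that every point of $\partial U_\epsilon$ lies on some $\partial B_\epsilon(p_i)$ while staying outside all the open balls $B_\epsilon(p_j)$, so that $\mathrm{dist}(\cdot,p_j) \ge \epsilon$ holds simultaneously for all $j$ there. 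Writing $u_\delta$ through the positive Green's function $G$ of the Laplacian vanishing at the ends, normalised so that $G(\cdot,p_i) \sim 1/(4\pi\,\mathrm{dist}(\cdot,p_i))$, the singular profile \eqref{BehaviourAtPoints} together with uniqueness forces $u_\delta = -\sum_i c_i\, G(\cdot,p_i)$ with each $c_i$ a fixed positive multiple of $k_i^I$. Invoking the standard bound $G(x,y) \le C_g/\mathrm{dist}(x,y)$ on an AC manifold, on $\partial U_\epsilon$ every term satisfies $G(\cdot,p_i) \le C_g/\epsilon$, whence $u_\delta \ge -C_g' k/\epsilon$ with $k = \sum_i k_i^I$ and $C_g'$ depending only on $g$.

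Combining the two estimates, on $\partial U_\epsilon$ one obtains $\phi_D \ge m_0 - \sigma_{[m]} - C_g' k/\epsilon$, and substituting $\epsilon^{-1} = \sqrt{8m_0/10}$ turns the right-hand side into $m_0 - \sigma_{[m]} - c\,k\sqrt{m_0}$ for a constant $c$ depending only on $g$. Since $m_0$ dominates $\sqrt{m_0}$, the inequality $m_0 - \sigma_{[m]} - c\,k\sqrt{m_0} \ge m_0/2$ is a quadratic condition in $\sqrt{m_0}$ and holds for all $m_0 \ge \mu$, with $\mu$ its larger root, depending only on $g$ and $[m]$ (the charge $k$ being fixed). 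Together with $\min_i m_i \ge m_0/2$ this gives $\inf_{U_\epsilon}\phi_D \ge m_0/2$, as claimed. The heart of the matter, and the only genuinely analytic input, is the uniform upper bound $G(x,y) \le C_g/\mathrm{dist}(x,y)$: it is what makes the estimate independent of the locations of the $p_i$ and what forces the singular contributions to scale only like $\sqrt{m_0}$ rather than like $m_0$, which is exactly why the threshold $\mu$ can be chosen depending on $g$ and $[m]$ alone. I would either cite the standard Green's function bounds on AC manifolds (of the same flavour as the Sobolev estimate of Lemma \ref{Kato}, via \cite{Hein11}), or, if a self-contained argument is preferred, replace this step by a superharmonic barrier built from the geodesic distances $\mathrm{dist}(\cdot,p_i)$ on each ball and compared with $\phi_D$ on $\partial B_\epsilon(p_i)$.
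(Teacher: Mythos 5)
Your argument is correct, but it runs along a genuinely different track from the paper's. The paper works \emph{locally}: it takes the expansion $\phi_D = -\tfrac{1}{2r}+c_i+m_0+O(r)$ of Proposition \ref{LinearProposition} on a fixed ball $B_\delta(p_i)$, absorbs the $O(r)$ error into the constants, requires $m_0\ge 10\max_i(-c_i)$ so that $m_0+c_i\ge\tfrac{9}{10}m_0$, and then the explicit inequality $\tfrac{9}{10}m_0-\tfrac{1}{2r}\ge\tfrac{m_0}{2}$ is what produces the specific radius $\tfrac{10}{8m_0}$ and hence $\epsilon=\sqrt{10/(8m_0)}$; the maximum principle then propagates the boundary bound into $U_\epsilon$. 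You instead work \emph{globally}, splitting $\phi_D=m_0+u_w+u_\delta$ with $|u_w|\le\sigma_{[m]}$ by the maximum principle and controlling $u_\delta$ on all of $U_\epsilon$ at once through the Green's function bound $G(x,y)\le C_g/\mathrm{dist}(x,y)$ and the observation that every point of $U_\epsilon$ is at distance at least $\epsilon$ from \emph{every} $p_j$. What your route buys is uniformity: the paper's threshold $\mu=10\max_i(-c_i)$ secretly depends on the regular parts $c_i$, which vary with the configuration of points (and can blow up as points collide), whereas your $\mu$ visibly depends only on $g$, $[m]$ and the total charge $k$, which is what is actually needed later when $p$ ranges over $X^k(m)$; you also do not need the paper's unproved assertion that $\phi_D$ is radially increasing near each $p_i$. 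The price is the one genuinely nontrivial analytic input, the uniform upper bound on the Green's function of an AC $3$-manifold; this is standard (it follows from the Sobolev inequality of Lemma \ref{Kato} by the usual iteration, or from the barrier you sketch) but must be cited or proved, and the identification $u_\delta=-\sum_i c_i G(\cdot,p_i)$ deserves one line (the difference is harmonic, has finite Dirichlet energy away from the poles and vanishes at the ends, hence vanishes by the uniqueness argument of Lemma \ref{LinearProblem1}). Finally, your argument yields the conclusion for any $\epsilon\gtrsim m_0^{-1/2}$ rather than the particular constant $\sqrt{10/(8m_0)}$ appearing in the statement; since that constant is only an artifact of the paper's explicit computation, this is harmless.
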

\begin{proof}
Let $m_{init} \in H^0(\Sigma, \mathbb{R})$ be a representative in the class $[m]$ with average mass $m_0(m_{init})=0$. Then,equation \ref{BehaviourAtPoints} in roposition \ref{LinearProposition} implies that there are $c_i \in \mathbb{R}$ and a positive $\delta < inj(X,g)$ such that on $B_{\delta}(p_i)$, 
$$\phi_D(r)  =  c_i - \frac{1}{2r} + O(r).$$
One can always add a constant $m_0$ and construct a new Dirac monopole $\phi_D$ with mass $m= m_{init} + \left( m_0, ...,m_0 \right)$. Then $[m]=[m_{init}]$, but $m_0([m])=m_0$. Then, by possibly changing the constants $c_i$ by some term linear in $\delta$ one can assume that on each $B_{\delta}(p_i)$
$$\phi_D  \geq  m_0 + c_i - \frac{1}{2r},$$
and is increasing along the radial coordinate. The goal now is to increase $m_0$ in order to find $\epsilon < \delta$, such that for $r \in (\epsilon, \delta)$ one has $\phi_D \geq \frac{m_0}{2}$. As the $c_i$'s do not change with $m_0$, one can change $m_0$ so that $m_0 + c_i \geq \frac{9}{10} m_0$, i.e. $m_0 \geq 10 \max_i (- c_i )$. Then it is enough to solve
$$ \frac{9}{10}m_0 - \frac{1}{2r} \geq \frac{m_0}{2}, $$
which gives $r \geq \frac{10}{8m_0} $. This quantity is strictly decreasing with increasing $m_0$, and so we can arrange for $\sqrt{\frac{10}{8m_0}} << \delta$, as $\delta$ does not depend on $m_0$. Then, set $\epsilon = \sqrt{\frac{10}{8m_0}} > \frac{10}{8m_0}$ for big $m_0$. Then, one has 
$$\phi_D \vert_{\partial B_{\epsilon}} \geq  \frac{m_0}{2},$$
so that the problem is reduced to show that $\phi_D \geq \frac{m_0}{2}$ on the whole of $U_{\epsilon}= X \backslash \cup_i B_{\epsilon}(p_i)$. This follows from the fact that $\phi_D$ is harmonic on $U_{\epsilon}$ and so by the maximum principle it has no interior maximum or minimum. These are attained at the ends of $X$ or at the spheres $\partial B_{\epsilon}$. Since, the $\epsilon$'s where chosen smaller than $\delta$ and on each $B_{\delta}(p_i)$, $\phi_D$ is increasing with $r=dist(p_i, \cdot )$, the conclusion is that the minimum is attained at the inner boundaries and so $\phi_D \geq \frac{m_0}{2}$ on $X \backslash \cup_i B_{\epsilon}(p_i)$.
\end{proof}

\begin{definition}
Let $m \in H^{0}(\Sigma, \mathbb{R})$ be a configuration with positive average mass $m_0$. Define $X^k(m)$ as the set of $p=(p_1,...,p_k) \in X^k$ such that $b(p,[m])=0$ and $dist(p_i,p_j) \geq \frac{4}{\sqrt{m_0}}$, for all distinct $i,j \in \lbrace 1, ..., k \rbrace$.
\end{definition}

\begin{remark}
In the case where $b^2(X)=0$, the map $b$ vanishes identically and $H^0(\Sigma, \mathbb{R}) \cong \mathbb{R}^l$ so $X^k(m)$ is the open subset of $X^k$ where the distinct points are at distance more than $4/\sqrt{m}$ apart. However, in general this is the intersection of this open set $U(m)$ with a submanifold $b_{[m]}^{-1}(0)$ of $X^k$ of dimension $3k - b^2(X)$. Moreover, this submanifold only depends on the equivalence class $[m]$ and so by increasing $m_0(m)$ one can make the open set $U(m)$ as large as one wants and so $X^k(m)= b_{[m]}^{-1}(0) \cap U(m)$ is certainly nonempty.
\end{remark}

\begin{proposition}\label{prop:AproximateSolution}
Fix an identification $\mathbb{S}^1 \cong (\mathring{M}_{1}(\mathbb{R}^3))$ with the moduli space of charge $1$ centered monopoles on $\mathbb{R}^3$. Denote $k$ copies of this by $\mathbb{T}^k$ and let $\check{\mathbb{T}}^{k-1} = \lbrace (e^{i\theta_1}, ... , e^{i \theta_k} ) \in \mathbb{T}^k \ \vert \ e^{i(\theta_1 + ... + \theta_k)}=1 \rbrace$. Then, for all $[m] \in H^0(\Sigma, \mathbb{R})/ \mathbb{R}$, there is $\mu_0>0$, such that for $m_0(m) > \mu_0$ there is a function
\begin{eqnarray}
H: X^k(m) \times H^1(X, \mathbb{S}^1) \times \check{\mathbb{T}}^{k-1} \rightarrow C_{k,m} (X,g),
\end{eqnarray}
with the property that for each configuration $(A_0,\Phi_0)= H(p, \alpha, \theta)$ there are real numbers $\epsilon^i_{out} = 2\epsilon^i_{in} =O(m_0^{-1/2})$ and $\lambda_i = O(m_0)$ such that 
\begin{itemize}
\item $(A_0, \Phi_0)$ is a configuration on a $SU(2)$ bundle $E$, such that $E \vert_{X \backslash \lbrace p_1,...,p_k \rbrace} \cong L \oplus L^{-1}$, where $L \oplus L^{-1}$ is the bundle from theorem \ref{DiracMonopole} and $\vert \Phi_0 \vert > \frac{m_0}{2}$.
\item On $X \backslash \cup_{i=1}^{\sharp-pts.} B_{\epsilon^i_{out}}(p_i)$, the configuration $(A_0,\Phi_0)$ coincides with a Dirac monopole,
\item Using geodesic normal coordinates on each $B_{\epsilon^i_{in}}(p_i)$, $(A_0,\Phi_0)$ coincides with a centered charge $1$ and mass $\lambda_i$ BPS monopole on $\mathbb{R}^3$.
\end{itemize}
\end{proposition}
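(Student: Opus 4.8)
The plan is to build $(A_0,\Phi_0)$ by gluing the singular Dirac monopole of Theorem \ref{DiracMonopole} to a rescaled charge-$1$ BPS monopole inserted at each marked point $p_i$, with the rescaling chosen so that the BPS core collapses to a scale much smaller than the separation $4m_0^{-1/2}$ guaranteed by membership in $X^k(m)$. Concretely, given $(p,\alpha,\theta)$, Theorem \ref{DiracMonopole} produces a Dirac monopole $(A_D,\Phi_D)$ on $L\oplus L^{-1}$ over $X\setminus\{p_1,\dots,p_k\}$ carrying the flat twist determined by $\alpha\in H^1(X,\mathbb{S}^1)$; after enlarging $m_0$ through Lemma \ref{MinimumDirac} it satisfies $\phi_D\ge m_0/2$ on $U_\epsilon$. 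Near each $p_i$, equation \ref{BehaviourAtPoints} with $k_i^I=1$ gives $\phi_D=-\tfrac{1}{2r}+c_i+O(r)$ in geodesic normal coordinates $r=\mathrm{dist}(p_i,\cdot)$, so on the spheres of radius $\epsilon_{in}^i=O(m_0^{-1/2})$ the field $\phi_D$ takes a value of size $O(m_0)$.

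The local model to be inserted is a scaled BPS monopole. Using the scaling map of Section \ref{MonopolesR3} and Proposition \ref{scaling}, the unit-mass BPS monopole of Definition \ref{def:BPSandR} yields a centred charge-$1$ monopole $(A^{BPS}_{\lambda_i},\Phi^{BPS}_{\lambda_i})$ of mass $\lambda_i$ on $(\mathbb{R}^3,g_E)$, whose Higgs norm grows from $0$ at the centre to its asymptotic value $\lambda_i$ with Dirac-type decay $\lambda_i-O(r^{-1})$. I would fix $\lambda_i$ by matching this asymptotic mass to the value of $\phi_D$ near the gluing radius; since that value is of size $O(m_0)$ this forces $\lambda_i=O(m_0)$, while the $r^{-1}$ coefficients agree automatically because both configurations carry unit magnetic charge. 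The framing of $(A^{BPS}_{\lambda_i},\Phi^{BPS}_{\lambda_i})$ is fixed by the phase $e^{i\theta_i}\in\mathbb{S}^1\cong\mathring{M}_{1,1}$ coming from $\check{\mathbb{T}}^{k-1}$, the constraint $e^{i(\theta_1+\dots+\theta_k)}=1$ removing exactly the residual diagonal $\mathbb{S}^1$ which is the automorphism group of $(A_D,\Phi_D)$ (see the remark after Theorem \ref{DiracMonopole}). Transplanting this model to $B_{\epsilon_{in}^i}(p_i)$ via the normal-coordinate chart, I then declare $(A_0,\Phi_0)$ to equal the scaled BPS monopole on $B_{\epsilon_{in}^i}(p_i)$, the Dirac monopole on $X\setminus\bigcup_i B_{\epsilon_{out}^i}(p_i)$, and a cutoff interpolation between them on each annulus $B_{\epsilon_{out}^i}(p_i)\setminus B_{\epsilon_{in}^i}(p_i)$, where $\epsilon_{out}^i=2\epsilon_{in}^i=O(m_0^{-1/2})$.

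The consistency of these choices is where the hypotheses enter. The separation $\mathrm{dist}(p_i,p_j)\ge 4m_0^{-1/2}$ built into $X^k(m)$, together with $\epsilon_{out}^i=O(m_0^{-1/2})$, keeps the annuli disjoint and inside the injectivity radius once $\mu_0$ is large. With $\lambda_i=O(m_0)$ and $\epsilon_{in}^i=O(m_0^{-1/2})$ one has $\lambda_i\epsilon_{in}^i=O(m_0^{1/2})\to\infty$, so throughout each annulus $\lambda_i r\to\infty$; Lemma \ref{lem:BPSestimate}, in the charge-$1$ case where it gives the full exponential bounds $|\Phi^{BPS}-\Phi^D|=O(e^{-r})$ and $|A^{BPS}-A^D|=O(e^{-r})$, then shows the scaled BPS monopole is $O(e^{-\lambda_i r})=O(e^{-c\,m_0^{1/2}})$-close to the flat Dirac configuration of mass $\lambda_i$. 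Since the metric is Euclidean to order $O(r^{\nu})$ near $p_i$ and $\lambda_i$ was matched to $\phi_D$, the monopole $(A_D,\Phi_D)$ is likewise close to that same flat model on the annulus, so the two pieces being interpolated agree to exponential accuracy — which is precisely what will make the cutoff contribution to $e_0$ controllable in Lemma \ref{prop:ErrorEstimate}. The bound $|\Phi_0|>m_0/2$ holds wherever $(A_0,\Phi_0)$ coincides with the Dirac monopole by Lemma \ref{MinimumDirac}; inside each core $\Phi_0$ inherits the single zero of the charge-$1$ BPS monopole, so $(A_0,\Phi_0)$ is a genuine charge-$k$, mass-$m$ configuration in $C_{k,m}(X,g)$.

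The main obstacle is the topological and gauge-theoretic matching of the reductions across the annulus. Away from the punctures the $SU(2)$-bundle is $L\oplus L^{-1}$ with $L$ of local degree $1$, so its restriction to a small sphere about $p_i$ is the degree-$1$ reduction $H\oplus H^{-1}$, which is exactly the reduction to which the charge-$1$ BPS monopole converges at infinity; the two structures are therefore abstractly isomorphic and the glued bundle $E$ extends smoothly across $p_i$ as a trivial $SU(2)$-bundle, the charge-$1$ monopole filling in the puncture, while $E|_{X\setminus\{p_1,\dots,p_k\}}\cong L\oplus L^{-1}$. The delicate point is that the \emph{specific} gluing isomorphism is not canonical: it is ambiguous by the $\mathbb{S}^1$ of framings, and this is the choice recorded by the phase $e^{i\theta_i}$. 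I expect checking that these $k$ framing choices assemble into a well-defined configuration on a global $SU(2)$-bundle — independent of the diagonal $\mathbb{S}^1$ and smooth across the cutoff — to be the step requiring care, the remaining transplantation and cutoff estimates being routine and deferred to Lemma \ref{prop:ErrorEstimate}.
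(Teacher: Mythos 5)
Your proposal is correct and follows essentially the same route as the paper: construct the Dirac monopole from Theorem \ref{DiracMonopole}, read off $\lambda_i$ from the constant term of $\phi_D$ near each $p_i$ (the paper takes $\lambda_i=m_0+c_i$ explicitly), set $\epsilon^i_{out}=2\epsilon^i_{in}=O(m_0^{-1/2})$, use the framings $\theta_i$ to extend $L\oplus L^{-1}$ across the punctures to $E$, and interpolate with a cutoff on the annuli, deferring the error estimate to Lemma \ref{prop:ErrorEstimate}. The only cosmetic difference is that the paper writes the interpolation as an explicit partition-of-unity formula for $(A_0,\Phi_0)$, which is what your ``cutoff interpolation'' amounts to.
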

\begin{proof}
Let $x=(p, \alpha , \theta) \in X^k(m) \times H^1(X, \mathbb{S}^1) \times \check{\mathbb{T}}^{k-1}$. Starting with $(p,\alpha)$, theorem \ref{DiracMonopole} in the last section constructs a reducible $SU(2)$ Dirac monopole $(A_D,\Phi_D)$ on $L \oplus L^{-1}$ over $X \backslash \lbrace p_1,..., p_k \rbrace$. From the proof of lemma \ref{MinimumDirac}, in a neighborhood of each $p_i$, one can write
$$\phi_D = -\frac{1}{2r} + c_i + m_0 + O(r),$$
where the $c_i$'s are constants independent of the average mass $m_0$ and depend only on the points $p_i$ and the class $[m]$. Moreover there is $\mu >0$, such that for $m_0 > \max \lbrace 1+ 2\sup_i \vert c_i \vert, \mu \rbrace$ and define
\begin{eqnarray}\label{lambdai}
\lambda_i & = & m_0 + c_i \\ \label{epsiloni}
\epsilon_{in}^i & = & \lambda_i^{-1/2},
\end{eqnarray}
and one can check that locally around each $p_i$ for $r> \epsilon_{in}^i$, it holds that $\phi_D > \frac{\lambda_i}{2}$.

The identification $\mathbb{S}^1 \cong \mathring{M}_{1}(\mathbb{R}^3)$ gives for each element of $ \theta_i \in \mathbb{S}^1$ a framed centered BPS monopole $(A^{BPS}, \Phi^{BPS} , \eta_i)$. As the bundle $L$ has degree one on small spheres around each $p_i$ we can use the framing $\eta_i $ to identify the trivial $\mathbb{C}^2$ bundle over $\partial B_{\epsilon^i_{out}} \backslash \lbrace p_i \rbrace$ with the restriction of $L \oplus L^{-1}$. This extends the bundle $L \oplus L^{-1}$ over the points $p_i$ to form the bundle $E$.\\
Recall from definition \ref{def:BPSandR} that $\vert \Phi^{BPS} \vert \geq \frac{1}{2}$, outside $B_1(0) \subset \mathbb{R}^3$. After scaling these BPS configurations to have mass $\lambda_i$, they satisfy $\vert \Phi^{BPS}_{\lambda_i} \vert \geq \frac{\lambda_i}{2}$ outside the ball of radius $\lambda_i^{-1}$ in $\mathbb{R}^3$. Let $\epsilon^i_{out} = 2(\lambda_i)^{-1/2}$, then $dist(p_i,p_j) > 2 \epsilon^i_{out}$. Then using geodesic normal coordinates centered at the points $p_i$ and $\theta = (\theta_1, ... , \theta_k ) \in  \check{\mathbb{T}}^{k-1}$ to identify the bundles we can pullback these scaled monopoles $(A^{BPS}_{\lambda_i}, \Phi^{BPS}_{\lambda_i})$ from a small ball $B_{\epsilon^i_{out}} \subset \mathbb{R}^3$ to the ball $B_{\epsilon^i_{out}}(p_i)$.

Now consider the open cover of $X$ given by the $B_{\epsilon^i_{out}}(p_i)$ and $U_{\epsilon}= X \backslash \cup_{i=1} \overline{ B_{\epsilon^i_{in}}(p_i)} $ and let $\lbrace \chi_{out}, \chi_1 , ..., \chi_{k} \rbrace$ be a partition of unity subordinate to this cover, such that
\begin{equation*}
\chi_{in}^i = \left\{
\begin{array}{rl}
1 & \text{in } B_{\epsilon^i_{in}}(p_i),\\
0 & \text{in } X \backslash B_{\epsilon^i_{out}}(p_i).
\end{array} \right. \ \ , \ \ 
\chi_{out} = \left\{
\begin{array}{rl}
0 & \text{in } \cup_{i=1}^{k} B_{\epsilon^i_{in}}(p_i),\\
1 & \text{in } X \backslash \cup_{i=1}^{k} B_{\epsilon^i_{out}}(p_i).
\end{array} \right. 
\end{equation*}
and
$$\sum_{i=1}^{k}\vert \nabla \chi_i \vert + \vert \nabla \chi_{out} \vert \leq c \sup_{i\in \lbrace 1,...,k \rbrace} \left( \epsilon^i_{out} - \epsilon^i_{in} \right)^{-1}\leq c \sqrt{m_0}.$$
Define the approximate solution as the following configuration on $E$
\begin{eqnarray}\label{Approximate1}
\Phi_0 & = & \chi_{out} \Phi_D + \sum_{i=1}^{k} \chi^i_{in} \Phi^{BPS}_{\lambda_i} \\  \label{Approximate2}
A_0 & = & A_D + \sum_{i=1}^{k} \chi^i_{in} (A^{BPS}_{\lambda_i} - A_D).
\end{eqnarray}
One must remark that even though the connection $A_D$ does not extend over the points $p_i$, the connection $A_0$ does, as on each trivialization over $B_{\epsilon^i_{in}}(p_i)$, the connection $A_0$ is represented by the connection $A^{BPS}_{\lambda_i}$.
\end{proof}

\begin{remark}
Recall that the automorphism group of the pair $(A_D, \Phi_{D})$ on $X \backslash \cup_{i=1}^k B_{\epsilon^i_{in}} (p_i)$ is isomorphic to $\mathbb{S}^1$ via $e^{i a} \mapsto e^{i a \Phi_D}$. Hence, given $\theta=(\theta_1, ..., \theta_k) \in \mathbb{T}^k$, the configuration $(A_0,\Phi_0)=H(p, \alpha , \theta)$ is gauge equivalent via $e^{i a \Phi_0}$ to $H(p,\alpha,\theta+ \vec{a})$ where $\theta + \vec{a}=(\theta_1 + a, ..., \theta_k + a )$.
\end{remark}

\begin{lemma}\label{prop:ErrorEstimate}
There is a constant $c>0$ independent of $m$, such that for any approximate solution $(A_0,\Phi_0)$ in the image of the map $H$ the quantity $e_0 = \ast F_{A_0} - \nabla_{A_0} \Phi_0$ vanishes outside the balls $B_{\epsilon^i_{out}}(p_i)$ and its $C^0$ norm is bounded by
$$\vert e_0 \vert  \leq c m_0 .$$ 
In particular, if the metric $g$ on $X$ is exactly Euclidean on the balls $B_{\epsilon^i_{out}}(p_i)$, then in fact the error $e_0$ is supported on the annulus $B_{\epsilon^i_{out}}(p_i) \backslash B_{\epsilon^i_{in}}(p_i)$.
\end{lemma}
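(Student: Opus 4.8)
The plan is to estimate $e_0 = \ast F_{A_0} - \nabla_{A_0}\Phi_0$ region by region, exploiting the explicit three-part structure of the approximate solution $(A_0,\Phi_0)$ given by equations \eqref{Approximate1} and \eqref{Approximate2}. First I would observe that $e_0$ vanishes on the two ``interior'' regions of the construction. On $X \backslash \cup_i B_{\epsilon^i_{out}}(p_i)$ the partition of unity satisfies $\chi_{out}=1$ and $\chi^i_{in}=0$, so $(A_0,\Phi_0)=(A_D,\Phi_D)$ is exactly the Dirac monopole of Theorem \ref{DiracMonopole}, which solves the Bogomolny equation there; hence $e_0=0$. Symmetrically, on each ball $B_{\epsilon^i_{in}}(p_i)$ we have $\chi^i_{in}=1$ and $\chi_{out}=0$, so $(A_0,\Phi_0)$ equals the pulled-back scaled BPS monopole $(A^{BPS}_{\lambda_i},\Phi^{BPS}_{\lambda_i})$. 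This is an honest monopole on Euclidean $\mathbb{R}^3$, so the only source of error inside $B_{\epsilon^i_{in}}(p_i)$ is the discrepancy between the true metric $g$ and the Euclidean metric in geodesic normal coordinates; when $g$ is exactly Euclidean on these balls this term vanishes identically, giving the final sentence of the statement.

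This localizes all of $e_0$ (up to the curvature-of-metric correction) to the annuli $B_{\epsilon^i_{out}}(p_i) \backslash B_{\epsilon^i_{in}}(p_i)$, where the cutoffs interpolate between $(A_D,\Phi_D)$ and $(A^{BPS}_{\lambda_i},\Phi^{BPS}_{\lambda_i})$. On each annulus I would expand $e_0$ using $\Phi_0 = \Phi_D + \chi^i_{in}(\Phi^{BPS}_{\lambda_i}-\Phi_D)$ and $A_0 = A_D + \chi^i_{in}(A^{BPS}_{\lambda_i}-A_D)$. Since both $(A_D,\Phi_D)$ and $(A^{BPS}_{\lambda_i},\Phi^{BPS}_{\lambda_i})$ solve the Bogomolny equation, the error is generated entirely by terms in which the derivative lands on $\chi^i_{in}$, together with the quadratic term $\ast[a\wedge a]$ coming from the difference $a = A^{BPS}_{\lambda_i}-A_D$ of connections (see the structure of $Q$ in \eqref{firstorderPDE}). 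Schematically,
\[
e_0 = \ast\big(d\chi^i_{in}\wedge (A^{BPS}_{\lambda_i}-A_D)\big) - d\chi^i_{in}\,(\Phi^{BPS}_{\lambda_i}-\Phi_D) + (\text{zero-order quadratic in } \chi^i_{in}).
\]
The two factors to control are therefore $|\nabla\chi^i_{in}|$ and the pointwise size of the differences $A^{BPS}_{\lambda_i}-A_D$ and $\Phi^{BPS}_{\lambda_i}-\Phi_D$ on the annulus.

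For the cutoff, the construction already records $|\nabla\chi^i_{in}| \leq c(\epsilon^i_{out}-\epsilon^i_{in})^{-1} \leq c\sqrt{m_0}$, using $\epsilon^i_{out}=2\epsilon^i_{in}=O(m_0^{-1/2})$ from equations \eqref{lambdai} and \eqref{epsiloni}. For the differences, I would apply the comparison estimate of Lemma \ref{lem:BPSestimate}: after scaling the BPS monopole to mass $\lambda_i=O(m_0)$ via Proposition \ref{scaling}, the transverse parts of $\Phi^{BPS}_{\lambda_i}-\Phi_D$ and all of $A^{BPS}_{\lambda_i}-A_D$ decay exponentially in the rescaled radius $r = \lambda_i\,\mathrm{dist}(p_i,\cdot)$, which on the annulus $\{\epsilon^i_{in}<\mathrm{dist}<\epsilon^i_{out}\}$ is of order $\lambda_i^{1/2}=O(m_0^{1/2})$, hence negligible; while the longitudinal part of the Higgs difference is controlled by matching the leading $-1/(2r)$ singularity of $\Phi_D$ against that of the Dirac monopole underlying the BPS asymptotics, so that $|\Phi^{BPS}_{\lambda_i}-\Phi_D| = O(1)$ relative to the ambient scale but $O(m_0^{1/2})$ in absolute terms on the annulus. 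Multiplying the worst gradient bound $|\nabla\chi^i_{in}|=O(m_0^{1/2})$ by the worst difference bound $O(m_0^{1/2})$ yields $|e_0| \leq c\,m_0$, as claimed. The main obstacle I anticipate is precisely this bookkeeping of the longitudinal Higgs difference: one must verify that the constants $c_i$ appearing in the local expansion $\phi_D = -\tfrac{1}{2r}+c_i+m_0+O(r)$ are absorbed correctly into the choice $\lambda_i=m_0+c_i$ so that the two $-1/(2r)$ singularities genuinely cancel on the annulus, leaving only the $O(m_0)$ contribution rather than a larger one; the exponential decay of the transverse and connection pieces, by contrast, makes those contributions subdominant and essentially automatic.
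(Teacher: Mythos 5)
Your decomposition of the error by regions and your reading of the final sentence are correct, but there is a genuine gap: you never estimate the one term that actually produces the bound $c\,m_0$. In the paper's proof the cutoff/interpolation terms on the annuli are \emph{not} the dominant contribution --- they are exponentially small --- and the bound $\vert e_0\vert \leq c\,m_0$ comes entirely from the failure of $g$ to be Euclidean on $B_{\epsilon^i_{out}}(p_i)$. Concretely, one must estimate $(\ast_g-\ast_{g_0})F_{A^{BPS}_{\lambda_i}}$: in geodesic normal coordinates $g=g_0+x_ix_j\gamma^{ij}+O(x^3)$, so this term is $O\bigl((\epsilon^i_{out})^2\,\vert F_{A^{BPS}_{\lambda_i}}\vert_{g_0}\bigr)=O(m_0^{-1}\cdot m_0^{2})=O(m_0)$, since the curvature of the mass-$\lambda_i$ BPS monopole is $O(m_0^2)$. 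You acknowledge this term exists (``the curvature-of-metric correction'') but treat it as a parenthetical and supply no bound, so your argument only proves the lemma under the extra hypothesis that $g$ is Euclidean near the $p_i$.

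The secondary problem is your bookkeeping on the annuli, which you yourself flag as unresolved. Lemma \ref{lem:BPSestimate} states that for $k=1$ one has $\nu=-\infty$, i.e.\ the \emph{full} difference $\vert\Phi^{BPS}-\Phi^{D}\vert=O(e^{-r})$, longitudinal part included; and the choice $\lambda_i=m_0+c_i$ in Proposition \ref{prop:AproximateSolution} is made precisely so that the mass-$\lambda_i$ Dirac field matches the local expansion $\phi_D=-\tfrac{1}{2r}+c_i+m_0+O(r)$, so the $-1/(2r)$ singularities and the constants both cancel. After scaling, $\vert\Phi_D-\Phi^{BPS}_{\lambda_i}\vert+\vert A_D-A^{BPS}_{\lambda_i}\vert=O(m_0e^{-\sqrt{m_0}})$ on the annulus, and all cutoff terms are $O(m_0^{3/2}e^{-\sqrt{m_0}})$, i.e.\ negligible. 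Your claimed $O(m_0^{1/2})$ for the longitudinal Higgs difference is therefore both unjustified and a large overestimate; the fact that multiplying it by $\vert d\chi\vert=O(m_0^{1/2})$ happens to land on $O(m_0)$ is a coincidence of exponents, not the mechanism of the proof. (Your schematic for $e_0$ also drops the terms $\chi_{out}\bigl(\ast d_{A^{BPS}_{\lambda_i}}(A_D-A^{BPS}_{\lambda_i})-\nabla_{A^{BPS}_{\lambda_i}}(\Phi_D-\Phi^{BPS}_{\lambda_i})\bigr)$, which do not vanish merely because both configurations solve the Bogomolny equation, though they too are exponentially small.)
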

\begin{proof}
On $X \backslash \cup_{i=1}^{k} B_{\epsilon^i_{out}}(p_i)$ the configuration $(A_0, \Phi_0)$ agrees with the Dirac monopole $(A_D, \Phi_D)$ and hence satisfies the monopole equations. So, the error term $e_0 = \ast F_{A_0} - \nabla_{A_0} \Phi_0$ is supported on the small balls $\cup_{i=1}^{k} B_{\epsilon^i_{out}(p_i)}$. On each of these balls, say $B_{\epsilon^i_{out}}(p_i)$ the bundle $E$ has a trivialization and $\chi^i_{in}= 1- \chi_{out}$, these may be used to write
\begin{eqnarray}\nonumber
\Phi_0 & = & \Phi^{BPS}_{\lambda_i} + \chi_{out}(\Phi_D - \Phi^{BPS}_{\lambda_i}) \\ \nonumber
A_0 & = &  A^{BPS}_{\lambda_i} + \chi_{out}(A_D - A^{BPS}_{\lambda_i}).
\end{eqnarray}
Since this agrees with $(A^{BPS}_{\lambda_i}, \Phi^{BPS}_{\lambda_i})$ in the ball $ B_{\epsilon^i_{in}(p_i)}$, the error term only depends on how far the metric $g$ on these balls differs from the Euclidean one. Computing the curvature and the covariant derivative of the Higgs field
\begin{eqnarray}\nonumber
F_{A_0} & = & F_{A^{BPS}_{\lambda_i}} + d_{A^{BPS}_{\lambda_i}} \chi_{out}(A_D - A^{BPS}_{\lambda_i}) + \chi_{out}^2 \left[ (A_D - A^{BPS}_{\lambda_i}) \wedge (A_D - A^{BPS}_{\lambda_i}) \right] \\ \nonumber
& = & F_{A^{BPS}_{\lambda_i}} + d\chi_{out} \wedge (A_D - A^{BPS}_{\lambda_i}) +  \chi_{out}d_{A^{BPS}_{\lambda_i}}(A_D - A^{BPS}_{\lambda_i}) \\ \nonumber
& & + \chi_{out}^2 \left[ (A_D - A^{BPS}_{\lambda_i}) \wedge (A_D - A^{BPS}_{\lambda_i}) \right], \\ \nonumber
\nabla_{A_0} \Phi_0 & = & \nabla_{A^{BPS}_{\lambda_i}} \Phi_0 + \chi_{out} \left[ (A_D - A^{BPS}_{\lambda_i}) , \Phi_0 \right] \\ \nonumber
& = & \nabla_{A^{BPS}_{\lambda_i}} \Phi^{BPS}_{\lambda_i} + d \chi_{out} (\Phi_D - \Phi^{BPS}_{\lambda_i} ) +  \chi_{out} \nabla_{A^{BPS}_{\lambda_i}}(\Phi_D - \Phi^{BPS}_{\lambda_i} ) \\ \nonumber
& & + \chi_{out} \left[ (A_D - A^{BPS}_{\lambda_i}) ,\Phi_0 \right],
\end{eqnarray}
one concludes that the error term is given by
\begin{eqnarray}\nonumber
\ast F_{A_0}- \nabla_{A_0} \Phi_0 & = & \ast F_{A^{BPS}_{\lambda_i}} - \nabla_{A^{BPS}_{\lambda_i}} \Phi_0  \\ \nonumber 
& & + \chi_{out} \left( \ast d_{A^{BPS}_{\lambda_i}}(A_D - A^{BPS}_{\lambda_i}) - \nabla_{A^{BPS}_{\lambda_i}}(\Phi_D - \Phi^{BPS}_{\lambda_i} ) \right) \\ \nonumber
& & + \ast  \left( d\chi_{out} \wedge (A_D - A^{BPS}_{\lambda_i}) \right) -  d \chi_{out} (\Phi_D - \Phi^{BPS}_{\lambda_i} ) \\ \nonumber
& & + \chi_{out}^2 \ \ast \left[ (A_D - A^{BPS}_{\lambda_i}) \wedge (A_D - A^{BPS}_{\lambda_i}) \right] \\ \nonumber
& & -  \chi_{out}\left[ (A_D - A^{BPS}_{\lambda_i}) ,\Phi_0 \right] 
\end{eqnarray}
The first of these terms would vanish if the metric $g$ is Euclidean in these balls. The terms $\vert A_D - A^{BPS}_{\lambda_i} \vert$ and $\vert \Phi_D - \Phi^{BPS}_{\lambda_i} \vert$ always appear multiplied by $\chi_{out}$ or $d\chi_{out}$ which are supported outside $B_{\epsilon_{in}}(p_i)$. So, one starts by estimating these on the annulus $B_{\epsilon_{out}}(p_i) \backslash B_{\epsilon_{in}}(p_i)$, where lemma \ref{lem:BPSestimate} gives
\begin{eqnarray}\nonumber
\Phi_{D} = \Phi^{BPS}_{\lambda_i} + O( \lambda_i e^{-\lambda_i r} ) \ , \ A_{D} = A^{BPS}_{\lambda_i} + O( \lambda_i r e^{-\lambda_i r}),
\end{eqnarray}
and since on these annulus $r \in (\epsilon^i_{in}, \epsilon^i_{out})$ and $\lambda_i=O(m_0)$, while $\epsilon^i_{out}= O(m_0^{-1/2})$
$$\vert A_D - A^{BPS}_{\lambda_i} \vert + \vert \Phi_D - \Phi^{BPS}_{\lambda_i} \vert = O( m_0 e^{- \sqrt{m_0}} ).$$
and this gives
\begin{eqnarray}\nonumber
\vert \ast  \left( d\chi_{out} \wedge (A_D - A^{BPS}_{\lambda_i}) \right) \vert + \vert  d \chi_{out} (\Phi_D - \Phi^{BPS}_{\lambda_i} ) \vert & = & O( m_0^{3/2} e^{- \sqrt{m_0}} ) \\ \nonumber
\vert \chi_{out}^2 (A_D - A^{BPS}_{\lambda_i})^2 \vert + \vert  \chi_{out} \left[ (A_D - A^{BPS}_{\lambda_i}) ,\Phi_0 \right] \vert & = & O( m_0 e^{- \sqrt{m_0}} )\\  \nonumber
 \chi_{out}^2 \left( \vert \ast d_{A^{BPS}_{\lambda_i}}(A_D - A^{BPS}_{\lambda_i})\vert  + \vert \nabla_{A^{BPS}_{\lambda_i}}(\Phi_D - \Phi^{BPS}_{\lambda_i} ) \vert \right) & = & O( m_0 e^{- \sqrt{m_0}} ) .
\end{eqnarray}
Back to evaluate the first term $\ast F_{A^{BPS}_{\lambda_i}} - \nabla_{A^{BPS}_{\lambda_i}} \Phi_0 $, one needs to compare the metric $g$ with the Euclidean metric $g_0$ on the balls $B_{\epsilon^i_{out}}(p_i)$. Let the $\lbrace x_i \rbrace_{i=1,2.3}$ denote geodesic normal coordinates centered at the point $p_i$ and recall that these are used to pullback the configuration $(A^{BPS}_{\lambda_i}, \Phi^{BPS}_{\lambda_i})$ from $\mathbb{R}^3$ to this small ball. In these coordinates $g= g_{0} + x_i x_j \gamma^{ij} + O(x^3)$ for some symmetric $2$-tensor $\gamma= x_i x_j\gamma^{ij}$. After a short computation expanding the formula $\omega \wedge \ast \omega = \vert \omega \vert_g^2 dvol_g$ for any $k$-form $\omega$, one concludes that
$$\ast_g \omega = \left( 1 + \frac{\gamma(\omega, \omega)}{\vert \omega \vert_{g_0}^2} - \frac{1}{6}Ric^{ij}x_i x_j \right) \ast_0 \omega + O( x^3).$$
Applying this to $\ast F_{A^{BPS}_{\lambda_i}}$ gives
\begin{eqnarray}\nonumber
\ast F_{A^{BPS}_{\lambda_i}} - \nabla_{A^{BPS}_{\lambda_i}} \Phi^{BPS}_{\lambda_i} & = & \ast_0 F_{A^{BPS}_{\lambda_i}} - \nabla_{A^{BPS}_{\lambda_i}} \Phi^{BPS}_{\lambda_i} + \\ \nonumber
& + & \left( \frac{ \gamma( F_{A^{BPS}_{\lambda_i}}, F_{A^{BPS}_{\lambda_i}}) }{ \vert F_{A^{BPS}_{\lambda_i}} \vert_{g_0}^2} - \frac{1}{6}Ric^{ij}x_i x_j \right) \ast_0 F_{A^{BPS}_{\lambda_i}},
\end{eqnarray}
the first term vanishes because $(A^{BPS}_{\lambda_i}, \Phi^{BPS}_{\lambda_i})$ is a monopole for the metric $g_0$. Regarding the second term it is $O( (\epsilon^i_{out})^2 \vert F_{A^{BPS}_{\lambda_i}} \vert_{g_0} )$, then since $\epsilon^i_{out}= O(m_0^{-1/2})$ and $\vert F_{A^{BPS}_{\lambda_i}} \vert_{g_0} = O(m_0^2)$, which gives
$$\vert \ast F_{A_{\lambda_i}} - \nabla_{A_{\lambda_i}} \Phi_{\lambda_i} \vert_{g}= O(m_0).$$
Summing all these terms for big $m_0$ (all the previous ones were lower order compared to this one), gives the result in the statement.
\end{proof}

This lemma points out one other "problem", there is no hope in controlling the error term in the $C^0$ norm provided by the metric $g$. This is related to the fact that there is no mass independent lower bound on the first eigenvalue of $d_2 d_2^*$ for the large mass BPS monopole. The idea to overcome this issue is to rescale all data inside the balls $B_{\epsilon_{out}^i}(p_i)$ and work there with the metric $g_{m_0^{-1}}=m_0^{2} \exp_{m_{0}^{-2}}^*g$. Then one can use the scaling identity
\begin{equation}
\vert \ast_{\epsilon} F_{A_{\epsilon \lambda_i}} - \nabla_{A_{\epsilon \lambda_i}} \Phi_{\epsilon \lambda_i} \vert_{g_{\epsilon}} = \epsilon^2 \vert \ast F_{A_{\lambda_i}} - \nabla_{A_{\lambda_i}} \Phi_{\lambda_i} \vert_{g},
\end{equation}
for all $\epsilon>0$. Applying this with $\epsilon= m_0^{-1}$ gives the following result

\begin{corollary}\label{cor:ErrorEstimate}
There is $c \in \mathbb{R}^+$, such that $e_0 = \ast F_{A_0} - \nabla_{A_0} \Phi_0$ vanishes on $X \backslash \cup_{i=1}^{k} B_{\epsilon^i_{out}}(p_i)$ and one the interior of these balls
$$\vert (e_0)_{m_0^{-1}} \vert_{g_{m_0^{-1}}}  = m_0^{-2} \vert e_0 \vert_{g}  \leq c m_0^{-1}.$$
\end{corollary}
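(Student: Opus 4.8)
The plan is to read this off directly from Lemma \ref{prop:ErrorEstimate} together with the scaling behaviour of the Bogomolny operator recorded in Proposition \ref{scaling}, so that the corollary is essentially a bookkeeping consequence of results already in hand. The vanishing statement needs no new argument: Lemma \ref{prop:ErrorEstimate} already asserts that $e_0 = \ast F_{A_0} - \nabla_{A_0}\Phi_0$ is supported on $\cup_i B_{\epsilon^i_{out}}(p_i)$, since outside these balls $(A_0,\Phi_0)$ coincides with the genuine Dirac monopole $(A_D,\Phi_D)$. Hence $e_0$ vanishes identically on $X \setminus \cup_{i=1}^k B_{\epsilon^i_{out}}(p_i)$, which is the first half of the claim.

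For the interior bound I would use the scaling identity displayed immediately before the corollary. The essential point is that this identity is a \emph{pointwise} statement about how $\ast_g$ and $\nabla_A\Phi$ transform under the conformal rescaling $g \mapsto g_{\epsilon}$ and pullback by the dilation $\exp_{\epsilon}$; it follows from the computation $\tilde{\ast}\,\omega = \delta^{n-2k}\ast\omega$ used in Proposition \ref{scaling}, and is valid for \emph{any} connection and Higgs field rather than only for a monopole. Applying it to the full approximate solution $(A_0,\Phi_0)$ on each ball with $\epsilon = m_0^{-1}$ gives
$$\vert (e_0)_{m_0^{-1}} \vert_{g_{m_0^{-1}}} = m_0^{-2}\,\vert e_0 \vert_{g}.$$
Combining this with the $C^0$ estimate $\vert e_0 \vert_g \leq c\,m_0$ from Lemma \ref{prop:ErrorEstimate} then yields $m_0^{-2}\vert e_0\vert_g \leq c\,m_0^{-1}$ on the interior of the balls, which is precisely the asserted bound.

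The one point that merits a moment's care — and the closest thing here to a genuine obstacle, though a mild one — is checking that the scaling relation applies verbatim to the patched configuration $(A_0,\Phi_0)$, and not only to the pure scaled BPS monopole $(A^{BPS}_{\lambda_i},\Phi^{BPS}_{\lambda_i})$ in terms of which the identity was written. This is routine precisely because the relation is insensitive to whether the underlying configuration solves the equation: it is the quantity $\ast F_A - \nabla_A\Phi$ itself that rescales by the factor $\epsilon^2$, so the cutoff-induced error contributions scale in the same way as the curvature term. Conceptually, the rescaling by $\epsilon = m_0^{-1}$ is doing exactly the work one wants, trading the unbounded factor $m_0$ produced by the concentrated large-mass BPS curvature for the decaying factor $m_0^{-1}$, thereby restoring the mass-independent control of the error term that the subsequent linear and nonlinear analysis will require.
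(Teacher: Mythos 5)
Your proposal is correct and follows exactly the route the paper takes: the support statement is read off from Lemma \ref{prop:ErrorEstimate}, and the bound is the pointwise scaling identity with $\epsilon = m_0^{-1}$ applied to the $C^0$ estimate $\vert e_0\vert_g \leq c\,m_0$. Your extra remark that the scaling relation holds for any configuration, not just solutions, is a correct and worthwhile clarification of why it applies to the patched pair $(A_0,\Phi_0)$.
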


Based on this, one can define function spaces where not only the error term will be small but the first eigenvalue of $d_2 d_2^*$ is bounded from below.

\section{Proof of the Main Theorem}\label{Proof}

Our goal is to apply a version of the Banach space Implicit Function theorem to solve the monopole equation. In order to do that we divide our proof into 3 major steps: 1. In subsection \ref{FunctionSpaces} we introduce suitable function spaces. 2. In subsection \ref{LinearEquation} we prove that using these function spaces the linearisation of the monopole equation $d_2$ is surjective. 3. Finally in subsection \ref{NonlinearEquation} we use the result from the second step in order to find a solution to the monopole equation. This also requires the spaces defined in the first step to give the initial approximate solution a small error term. In fact, we shall be able to control the error term in the approximate solutions in a uniform manner with respect to the mass of the approximate solution.

\subsection{Function Spaces}\label{FunctionSpaces}

This section introduces function spaces specially adapted to solve the monopole equation. To proceed with the definition of these Function spaces some preparation is needed. Let $(A_0, \Phi_0)$ be the approximate solution constructed in section \ref{ApproximateSolution}. Then on $U= X \backslash \cup_{i=1}^k B_{\epsilon^i_{out}}(p_i)$ the adjoint bundle $\mathfrak{g}_E$ splits as $\mathfrak{g}_E= \underline{\mathbb{R}} \oplus L^2$ and one writes a section $f=f^{\Vert} \oplus f^{\perp}$ according to this splitting. Let $\beta \in \mathbb{R}$ and $K \subset X$ containing $\cup_{i=1}^k B_{1}(p_i)$ and for each $n \in \mathbb{N}_0$, define $W_n: X \rightarrow \mathbb{R}$ to be smooth weight functions, interpolating between the values

\begin{equation*}
W_n= \left\{
\begin{array}{rl}
m_0^{-2+n}, & \text{in } \cup_{i=1}^k B_{\epsilon^i_{out}}(p_i),\\
1 , & \text{at } \cup_{i=1}^k \partial B_{1}(p_i),\\
\rho^{n-\beta- \frac{3}{2}} & \text{in } X \backslash K,
\end{array} \right.
\end{equation*}

Define for smooth compactly supported $f$, the norms

\begin{eqnarray}\nonumber
\Vert f \Vert_{H_{0,\beta}}^2 & = & \sum_{i=1}^{k} \int_{B_{1}(p_i)} \vert W_0 f \vert^2_{g} dvol_g \\  \label{FunctionSpaces1}
& &+ \int_{X \backslash \cup_{i=1}^k B_{1}(p_i)} \left( \vert W_0 f^{\Vert} \vert_g^2 + \vert f^{\perp} \vert^2_g \right) dvol_g .\\ \nonumber
\Vert f \Vert_{H_{n,\beta}}^2 & = & \sum_{j=0}^n  \sum_{i=1}^{k}  \int_{B_{1}(p_i)} \vert W_j \nabla_{A^{BPS}}^j f \vert^2_{g} dvol_g \\ \label{FunctionSpaces2}
& & + \sum_{j=0}^k \int_{X \backslash \cup_{i=1}^k B_{1}(p_i)}  \left( \vert W_j \nabla_{A_0}^j  f^{\Vert} \vert_g^2 + \vert ad_{\Phi_0}^{n-j} \left( \nabla_{A_0}^j f^{\perp} \right) \vert^2_g \right) dvol_g .
\end{eqnarray}

\begin{definition}\label{def:FunctionSpaces}
For $n \in \mathbb{N}_0$ and $\beta \in \mathbb{R}$, let $H_{n,\beta}$ be the Hilbert space completion of $C^{\infty}_0(X, (\Lambda^0 \oplus \Lambda^1)\otimes \mathfrak{g}_E)$ in the inner product obtained from polarizing the norms \ref{FunctionSpaces1} and \ref{FunctionSpaces2}.
\end{definition}

\begin{remark}
\begin{enumerate}
\item Recall the error $e_0$ of $(A_0,\Phi_0)$ is supported in the small balls $B_{\epsilon^i_{out}}(p_i)$ and to make it small in the $C^0$ norm we used an $\epsilon=m_0^{-1}$-rescaled norm $g_{\epsilon}= \epsilon^{-2} \exp^*_{\epsilon}g$, i.e. for $f \in \Gamma(X, \Lambda^l \otimes \mathfrak{g}_E)$ one needed to use $ \vert f_{\epsilon} \vert_{g_{\epsilon}}= \epsilon^2 \vert f \vert_g$, where $f_{\epsilon} = \exp^*_{\epsilon}f$ if $l=1$ and $f_{\epsilon} = \epsilon \exp^*_{\epsilon}f$ if $l=0$. Indeed, the functions $W_n$ are defined so that $W_0= m_0^{-2}$, on $\cup_{i=1}^k B_{\epsilon^i_{out}}(p_i)$.

\item Outside a big compact set $K$ the function spaces to be defined will coincide with the usual $L^2$ spaces for the $f^{\perp}$ component and the Lockhart-McOwen weighted spaces in the $f^{\Vert}$ component. These are suitable to order to find a bounded right inverse to the operator $d_2$.
\end{enumerate}
\end{remark}

\subsection{The Linear Equation}\label{LinearEquation}

The goal of this section is to solve the linear equation associated with the linearized monopole equation. Namely, one needs to find a range for $\beta \in \mathbb{R}$ such that if $g \in H_{\beta-1}$, then one can solve
$$d_2 f = g$$
for $f \in H_{1,\beta}$. In other words one needs to construct a right inverse $Q$ to the operator $d_2: H_{1,\beta} \rightarrow H_{\beta-1}$. This whole section is occupied with the proof of

\begin{proposition}\label{prop:LinearEquation}
Let $(X,g)$ be asymptotically conical with $b^2(X)=0$. Then there is $\mu>0$, such that if $m_0(m)> \mu$, the operator
$$d_2: H_{1,\beta} \rightarrow H_{0,\beta-1},$$
associated with any approximate solution $(A_0,\Phi_0)$ constructed in proposition \ref{ApproximateSolution} is surjective for all $\beta>-1$. In particular, there is a uniformly bounded right inverse $Q:H_{\beta-1} \rightarrow H_{1,\beta}$.
\end{proposition}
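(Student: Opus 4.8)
The plan is to produce the right inverse in the standard form $Q = d_2^*(d_2 d_2^*)^{-1}$, so that everything reduces to showing $d_2 d_2^*$ is invertible on $H_{0,\beta-1}$ with inverse bounded uniformly in $m_0 > \mu$. Concretely I would set up the problem variationally: to solve $d_2 f = g$ I seek $f = d_2^* u$, reducing to $d_2 d_2^* u = g$, and apply Lax--Milgram to the coercive form $u \mapsto \|d_2^* u\|^2$ on the appropriate weighted space. Thus the entire content is the \emph{uniform} a priori estimate $\|d_2^* u\| \ge c\,\|u\|$ with $c$ independent of $m_0$. The coercivity is extracted from the monopole Weitzenb\"ock formula (Lemma \ref{lem:MonopoleW3}): for an honest monopole the sequence \ref{Complex} is exact, so $DD^*$ is block diagonal with $(1,1)$-block exactly $d_2 d_2^*$, giving
$$\langle d_2 d_2^* u, u\rangle = \|\nabla_{A_0} u\|^2 - \langle [[u,\Phi_0],\Phi_0], u\rangle + \langle Ric^W u, u\rangle + \langle \epsilon_0^W u, u\rangle + R(u),$$
where $R(u)$ collects the terms arising because $d_2 d_1 \neq 0$ for the approximate solution and $\epsilon_0^W$ is governed by the error $e_0$, both controlled by Corollary \ref{cor:ErrorEstimate}.

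The decisive mechanism is the splitting $u = u^{\parallel} \oplus u^{\perp}$ relative to $\Phi_0$. On the transverse part the commutator gives $-\langle [[u^{\perp},\Phi_0],\Phi_0], u^{\perp}\rangle \sim |\Phi_0|^2 |u^{\perp}|^2 \ge (m_0/2)^2 |u^{\perp}|^2$ by Lemma \ref{MinimumDirac} and Proposition \ref{prop:AproximateSolution}, so \emph{away} from the points $p_i$ the transverse modes have a spectral gap of size $\sim m_0^2$, which dwarfs both the bounded Ricci term and the $O(m_0^{-1})$ rescaled error from Corollary \ref{cor:ErrorEstimate}. On the longitudinal part the bundle reduces to $\underline{\mathbb{R}}$ and the operator becomes the abelian linearisation, essentially $d+d^*$; here I would invoke Corollary \ref{rmk:LM}, which under $b^2(X)=0$ yields $\|(d+d^*)u\|_{L^2_{0,\alpha-1}} \ge c\,\|u\|_{L^2_{1,\alpha}}$ for all $\alpha < -1$. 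By the $L^2$-duality of the conically weighted spaces, the cokernel of $d_2$ at weight $\beta$ is identified with $\ker d_2^*$ at the dual weight $-\beta-2$, and $\beta > -1$ is exactly the range making $-\beta-2 < -1$; the hypothesis $b^2(X)=0$ is precisely what kills this kernel and permits pushing the weight up to, but not including, the critical value $-1$.

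Inside the balls $B_{\epsilon^i_{out}}(p_i)$ neither mechanism applies, since $|\Phi_0|$ is not large and $(A_0,\Phi_0)$ is a scaled charge-one BPS monopole. This is exactly the role of the weight functions $W_n$ (equal to $m_0^{-2+n}$ there) and the rescaled metric $g_{m_0^{-1}}$: after rescaling by $\exp_{m_0^{-1}}$ the configuration becomes a fixed-mass BPS monopole on nearly Euclidean $\mathbb{R}^3$, for which $d_2 d_2^*$ has a mass-independent lower bound on its first eigenvalue, modulo the one-dimensional cokernel corresponding to $\mathring{M}_{1,1} \cong \mathbb{S}^1$ (Example \ref{1MonopoleExample}). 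I would establish this uniform BPS gap, transplant it through the rescaling map, and observe that the normalisation $W_0 = m_0^{-2}$ converts the rescaled inequality directly into the $H$-norm, so the contribution of the balls carries a constant independent of $m_0$.

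Finally I would patch the three regional estimates -- transverse-gap region, abelian Lockhart--McOwen end, and rescaled-BPS balls -- across the interfaces $\partial B_1(p_i)$ and $\partial B_{\epsilon^i_{out}}(p_i)$ via the partition of unity, absorbing the commutator cross-terms $[\nabla_{A_0},\chi]$, which carry $|\nabla\chi| = O(\sqrt{m_0})$, into the dominant coercivity terms. The main obstacle I expect is exactly this patching: controlling the $u^{\parallel}$--$u^{\perp}$ cross-terms and the remainder $R(u)$ from $d_2 d_1 \neq 0$ uniformly in $m_0$, and verifying that the interpolating weights $W_n$ are arranged so that no region's constant degenerates as $m_0 \to \infty$. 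Making \emph{all} the constants simultaneously uniform in $m_0$, rather than merely within each region, is the crux of the argument; once that is achieved, closed range of $d_2^*$ and invertibility of $d_2 d_2^*$ give the uniformly bounded right inverse $Q$.
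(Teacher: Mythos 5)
Your reduction of surjectivity to the uniform coercivity estimate $\Vert d_2^* u\Vert_{H_{0,\alpha-1}} \ge c\Vert u\Vert_{H_{0,\alpha}}$ at the dual weight $\alpha=-\beta-2<-1$, and your treatment of the exterior region $U$ --- splitting $u=u^{\Vert}\oplus u^{\perp}$, getting the $m_0^2$ gap on $u^{\perp}$ from the Weitzenb\"ock formula and $\phi_D\ge m_0/2$, and invoking Corollary \ref{rmk:LM} with $b^2(X)=0$ for the abelian part $u^{\Vert}$ --- is exactly the paper's Lemma \ref{lem:IntermediateInequality}. Where you diverge is inside the balls $B_{\epsilon^i_{out}}(p_i)$. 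You propose to prove a mass-independent spectral gap for $d_2d_2^*$ of the rescaled BPS monopole and then patch it to the exterior estimate; the paper never proves any estimate there at all. Instead it minimizes $J(f)=\Vert d_2^*f\Vert^2_{H_{0,\alpha-1}}$ subject to $\Vert f\Vert_{H_{0,\alpha}}=1$, observes that the minimizer satisfies an elliptic equation and hence an $L^{\infty}$ bound on compact sets, and then notes that the cutoff error $\Vert d\chi\Vert^2_{H_{0,\alpha-1}}=O(m_0^{-3})$ and the ball contribution $\Vert f\Vert^2_{H_{0,\alpha}(B_{\epsilon^i_{out}})}$ are both $o(1)$ as $m_0\to\infty$ --- the latter precisely because the weight $W_0=m_0^{-2}$ and the volume $O(m_0^{-3/2})$ of the balls crush anything $L^{\infty}$-bounded. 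So the exterior inequality applied to $\chi f$ already forces $J(f)\ge c/2$ for large $m_0$, with no inner-region analysis needed. This is the real purpose of the weights $W_n$: not to transplant a BPS eigenvalue bound, but to make the balls invisible in the norm.

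Two concrete soft spots in your inner-region program, which the paper's route avoids. First, $\mathring{M}_{1,1}\cong\mathbb{S}^1$ parametrizes elements of $\ker(d_2\oplus d_1^*)$, i.e.\ deformations of the BPS monopole, not of $\ker d_2^*$; the relevant cokernel for your estimate is $\ker d_2^*$, which is actually trivial for the BPS monopole on $\mathbb{R}^3$ by the Weitzenb\"ock vanishing argument, so there is nothing to mod out --- but second, and more seriously, the longitudinal block of $d_2d_2^*$ for the BPS monopole is the flat $d+d^*$ on $\mathbb{R}^3$, whose $L^2$ spectrum reaches down to $0$, so there is no uniform gap in unweighted $L^2$ on the (rescaled, macroscopically large) balls; you would need a second layer of weighted analysis on $\mathbb{R}^3$ plus a genuine matching of boundary behaviour at $\partial B_{\epsilon^i_{out}}(p_i)$, which is exactly the kind of delicate patching your last paragraph worries about. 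The paper's variational argument shows this entire difficulty can be bypassed, at the modest price of a nonconstructive $L^{\infty}$ bound on the minimizer.
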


To prove this proposition we need to show that the cokernel of $d_2$ in $H_{0,\beta-1}$ vanishes. Using the $L^2$ inner product one can identify $H_{\beta-1}^* \cong H_{0,-\beta-2}$ and the cokernel of $d_2$ in $H_{0,\beta-1}$ with the kernel of $d_2^*$ in $H_{0,-\beta-2}$. The goal is to find a range of $\beta$ for which this vanishes. To prove this it enough to prove an inequality of the form
\begin{equation}\label{eq:MainInequality}
\Vert d_2^* f \Vert_{H_{0,\alpha-1}} \geq c \Vert f \Vert_{H_{0,\alpha}},
\end{equation}
for some $\alpha \in \mathbb{R},c>0$ and all $f \in H_{1,\alpha}$. Then, we set $\beta=-\alpha-2$ and construct $Q:H_{0,\beta-1} \rightarrow H_{1,\beta}$ to be such that $u=Qf$ is the unique solution of $d_2 u = f$ such that $u$ is $H_{1,\beta}$-orthogonal to the kernel of $d_2$ in $H_{1,\beta}$. To proceed with the proof, let $\chi$ be a bump function such that
\begin{equation*}\label{eq:BumpFunction}
\chi = \left\{
\begin{array}{rl}
0 & \text{in } \cup_{i=1}^{k} B_{\epsilon^i_{in}}(p_i),\\
1 & \text{in } U=X \backslash \cup_{i=1}^{k} \overline{B_{\epsilon^i_{out}}(p_i)},
\end{array} \right. 
\end{equation*}
and vanishing with all derivatives on $\partial B_{\epsilon^i_{in}}(p_i)$. One can further suppose that there is a constant $C>0$, such that $\vert d \chi \vert_g \leq C m_0^{1/2}$ on each annulus where $d\chi$ is supported. Hence its $L^2$ norm is bounded by $\Vert d\chi \Vert_{L^2} \leq C m_0^{-1/2}$, as the volume of the set containing the support of $d\chi$ is of order $(\epsilon^i_{out})^3=O(m_0^{-3/2})$. Then, for any section $f \in \Omega^1(X, \mathfrak{g}_P)$ one can write $f = (1-\chi)f + \chi f$, where the first term is supported inside the balls $B_{\epsilon^i_{out}}(p_i)$ and the second one is supported on $U$, the complement to their closure.

\begin{lemma}\label{lem:IntermediateInequality}
There is $\mu >0$ such that if $m_0(m) > \mu$ and $\alpha< -1$, there is a constant $c>0$, such that
$$\Vert d_2^* (\chi f) \Vert_{H_{0,\alpha-1}} \geq c \Vert \chi f \Vert_{H_{0,\alpha}},$$
for all $f \in H_{1,\alpha}$.
\end{lemma}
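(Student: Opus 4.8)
The plan is to exploit the reducible structure of the approximate solution on the support of $\chi f$. Since $\chi f$ is supported in $U = X\setminus\cup_i \overline{B_{\epsilon^i_{out}}(p_i)}$, where $(A_0,\Phi_0)$ coincides with the Dirac monopole $(A_D,\Phi_D)$, the adjoint bundle splits as $\mathfrak{g}_E=\underline{\mathbb{R}}\oplus L^2$ and we may write $\chi f=(\chi f)^{\Vert}\oplus(\chi f)^{\perp}$. Because $A_D$ is reducible and $\Phi_D=\phi_D\oplus 0$ lies in the central summand, the operators $\nabla_{A_0}$, $\ast d_{A_0}$ and $\mathrm{ad}_{\Phi_0}=[\,\cdot\,,\Phi_0]$ all preserve this splitting; hence so does $d_2^*$. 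I would therefore estimate $\Vert d_2^*(\chi f)^{\Vert}\Vert$ and $\Vert d_2^*(\chi f)^{\perp}\Vert$ by two completely different mechanisms and take the worse of the two constants.

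For the longitudinal part one has $[(\chi f)^{\Vert},\Phi_0]=0$, and $\nabla_{A_0}$ restricts to the trivial connection on $\underline{\mathbb{R}}$ (the unit section $\Phi_0/|\Phi_0|$ is $\nabla_{A_0}$-parallel on $U$), so that $d_2^*(\chi f)^{\Vert}=(\ast d(\chi f)^{\Vert},-d^*(\chi f)^{\Vert})$ is precisely the de Rham operator $d+d^*$ acting on an ordinary real $1$-form. On the region $X\setminus\cup_i B_1(p_i)$ the $H_{0,\beta}$-norm restricts on this component to the Lockhart--McOwen norm $L^2_{0,\beta}$, and I would apply Corollary \ref{rmk:LM} to $(\chi f)^{\Vert}$ to obtain $\Vert d_2^*(\chi f)^{\Vert}\Vert\ge c\Vert(\chi f)^{\Vert}\Vert$ with $c$ independent of $m_0$. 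This is where the hypotheses $b^2(X)=0$ and $\alpha<-1$ are used, since they are exactly what guarantees the injectivity estimate for $d+d^*$ at these weights.

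For the transverse part I would use the mass gap. Lemma \ref{MinimumDirac} gives $|\Phi_0|\ge m_0/2$ on $U$, and since $\mathrm{ad}_{\Phi_0}$ acts on the root space $L^2$ with eigenvalue of modulus comparable to $|\phi_D|$, we get the pointwise bound $|[(\chi f)^{\perp},\Phi_0]|\ge c_0 m_0\,|(\chi f)^{\perp}|$. Writing $\Vert d_2^*a\Vert^2=\langle DD^*(a,0),(a,0)\rangle$ and invoking the Monopole Weitzenb\"ock (Lemma \ref{lem:MonopoleW3}), the term $-\langle[[a,\Phi_0],\Phi_0],a\rangle=\Vert[a,\Phi_0]\Vert^2$ furnishes the dominant contribution $\gtrsim m_0^2\Vert(\chi f)^{\perp}\Vert^2$, the rough-Laplacian term $\Vert\nabla_{A_0}a\Vert^2$ is nonnegative, the $Ric^W$ term is $O(\Vert(\chi f)^{\perp}\Vert^2)$, and the $\epsilon_0^W$ term is controlled because $|e_0|=O(m_0)$ (Lemma \ref{prop:ErrorEstimate}) is pointwise dominated by the mass gap $m_0^2$. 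Thus for $m_0>\mu$ large the mass term wins, giving the desired transverse bound with a constant that in fact grows in $m_0$.

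The step I expect to be the main obstacle is reconciling these clean operator estimates with the actual $W_0$-weighted $H$-norms near the gluing balls, where $W_0$ dips all the way down to $m_0^{-2}$. Concretely, passing the weight through $d_2^*$ by applying the Weitzenb\"ock to $W_0(\chi f)^{\perp}$ produces, via the first-order symbol, an error $\int|dW_0|^2\,|(\chi f)^{\perp}|^2$ supported on the annuli $B_1(p_i)\setminus B_{\epsilon^i_{out}}(p_i)$; this is harmless for the transverse component because $W_0$ may be chosen to vary only logarithmically fast, $|d\log W_0|=O(\log m_0)\ll m_0$, so the error is absorbed by the mass term, whereas for the longitudinal component, which has no mass gap, one must instead rescale the inner region by $\exp_{m_0^{-1}}$ (as in Corollary \ref{cor:ErrorEstimate}) so that $W_0=m_0^{-2}$ becomes order $1$ and $d+d^*$ becomes a uniformly elliptic operator on a fixed domain admitting a standard coercivity estimate. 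Patching the rescaled interior estimate to the exterior Lockhart--McOwen estimate through the cutoff $\chi$, using $\Vert d\chi\Vert_{L^2}\le Cm_0^{-1/2}$, and checking that no $m_0$-dependent loss enters, is the delicate heart of the argument; once done, combining the longitudinal and transverse bounds yields $\Vert d_2^*(\chi f)\Vert_{H_{0,\alpha-1}}\ge c\,\Vert\chi f\Vert_{H_{0,\alpha}}$ with $c=\min$ of the two constants.
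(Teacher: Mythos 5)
Your proposal follows essentially the same route as the paper: split $\chi f$ into longitudinal and transverse parts using the reducible structure on $U$, control the longitudinal part by the $d+d^*$ estimate of Corollary \ref{rmk:LM} (exactly where $b^2(X)=0$ and $\alpha<-1$ enter), and control the transverse part by integrating the Weitzenb\"ock identity, $\Vert d_2^*(\chi f^{\perp})\Vert_{L^2}^2=\Vert\nabla_D(\chi f^{\perp})\Vert_{L^2}^2+\Vert\phi_D\,\chi f^{\perp}\Vert_{L^2}^2+\langle\chi f^{\perp},Ric(\chi f^{\perp})\rangle_{L^2}$, together with the lower bound $\phi_D\geq m_0/2$ from Lemma \ref{MinimumDirac} and $m_0>2\sqrt{\Vert Ric\Vert_{L^\infty}}$. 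The weight-transition issue you raise in your final paragraph is not engaged with in the paper's proof of this lemma, which simply identifies $\Vert\chi f^{\Vert}\Vert_{H_{n,\alpha}}$ with the Lockhart--McOwen norm and $\Vert\chi f^{\perp}\Vert_{H_{0,\alpha}}$ with the plain $L^2$ norm on the support of $\chi f$, deferring the contributions near the balls to the argument after the lemma (the $L^\infty$ bound on the minimizer and the smallness of $\Vert d\chi\Vert_{H_{0,\alpha-1}}$), so your extra care there goes beyond, rather than diverges from, the paper's treatment.
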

\begin{proof}
Recall that in the complement $U$, the approximate solution $(A_0,\Phi_0)$ gives a splitting $\mathfrak{g}_P \cong \underline{\mathbb{R}} \oplus L^2$, which is preserved by the operator $d_2$. Moreover, this is an orthogonal decomposition with respect to the $Ad$-invariant metric on $\mathfrak{g}_P$. So
$$\Vert d_2^* (\chi f) \Vert_{H_{0,\alpha-1}}^2 = \Vert d_2^* (\chi f^{\Vert}) \Vert_{H_{0,\alpha-1}}^2 + \Vert d_2^* (\chi f^{\perp}) \Vert_{H_{0,\alpha-1}}^2,$$
and it is enough to separately prove the inequality for each of these terms.
\begin{enumerate}
\item The longitudinal component $\chi f^{\Vert}$ is a one form, so $\Vert \chi f^{\Vert} \Vert_{H_{n,\alpha}}= \Vert \chi f^{\Vert} \Vert_{L^2_{n,\alpha}}$ and
$$d_2^*(\chi f^{\Vert})= (\ast d(\chi f^{\Vert}), - d^* (\chi f^{\Vert})).$$
Then using corollary \ref{rmk:LM} guarantees that $\Vert d_2^*(\chi f^{\Vert}) \Vert_{H_{0, \alpha -1}}^2 \geq c \Vert  \chi f^{\Vert} \Vert_{H_{1, \alpha}}^2$, for all $\alpha <-1$ and some $c$ not depending of $f$.

\item For the transverse component $\chi f^{\perp}$, $\Vert \chi f^{\perp} \Vert_{H_{0,\alpha}}= \Vert \chi f^{\perp} \Vert_{L^2}$ for all $\alpha \in \mathbb{R}$. So we need to consider
\begin{equation}
d_2^* (\chi f^{\perp}) = \left( \ast d_{A_0} (\chi f^{\perp}) + [\chi f^{\perp}, \Phi_0], - \nabla_{A_0} \chi f^{\perp}  \right),
\end{equation}
in usual $L^2$ spaces. Notice that $\chi f^{\perp}$ is supported in $U$, where the approximate solution $(A_0,\Phi_0)$ coincides with the Dirac monopole $(A_D, \Phi_D)$ (see the second bullet in \ref{prop:AproximateSolution}). Using the decomposition $\mathfrak{g}_P = \mathbb{R} \oplus L^2$, we have $\Phi_D=\phi_D \oplus 0$, and $\vert \phi_D \vert > \frac{m_0}{2}$. Then, integrating by parts together with the Weitzenb\"ok formula \ref{MonopoleW}, gives
\begin{eqnarray}\nonumber
\Vert d_2^* (\chi f^{\perp})  \Vert_{L^2}^2 & = & \langle \chi f^{\perp} , d_2 d_2^* (\chi f^{\perp}) \rangle_{L^2} \\ \nonumber
& = & \langle \chi f^{\perp} , \nabla_D^* \nabla_D (\chi f^{\perp})  + \phi_D^2 \chi f^{\perp} + Ric(\chi f^{\perp}) \rangle_{L^2} \\ \label{eq:Dirac2Sided}
& = &  \Vert \nabla_D (\chi f^{\perp}) \Vert_{L^2}^2 + \Vert \phi_D \chi f^{\perp} \Vert_{L^2}^2 + \langle \chi f^{\perp} ,  \chi Ric(f^{\perp}) \rangle_{L^2}.
\end{eqnarray}
Recall, from lemma \ref{MinimumDirac} that, $\phi_D$ is harmonic on $U$ and is bounded from below by its values at $\partial \overline{U}$, these being greater or equal to $\frac{m_0}{2}$. Then, $\phi_D \geq \frac{m_0}{2}$ on $U$ and
\begin{eqnarray}\nonumber
\Vert  d_2^* (\chi f^{\perp})   \Vert_{L^2}^2 & \geq & \Vert \nabla_D (\chi f^{\perp}) \Vert_{L^2}^2 + \left( \frac{m_0^2}{4} - sup_X \vert Ric \vert \right) \Vert \chi f^{\perp} \Vert_{L^2}^2 \\ \nonumber
& \geq & \Vert \nabla_D (\chi f^{\perp}) \Vert_{L^2}^2 + \frac{ \frac{m_0^2}{4} - sup_X \vert Ric \vert }{(m_0 + \max_i (m_i))^2 } \Vert \chi \phi_D f^{\perp} \Vert_{L^2}^2 \\ \nonumber
& \geq & c_2 \Vert \chi f^{\perp} \Vert_{H_{1, \alpha}}
\end{eqnarray}
for any $\alpha \in \mathbb{R}$ and one can take $c_2 = \min \big\lbrace 1 , \frac{ \frac{m_0^2}{4} - sup_X \vert Ric \vert  }{(m_0 + \max_i (m_i) )^2 } \big\rbrace >0$ for $m_0 > 2 \sqrt{\Vert Ric \Vert_{L^{\infty}}}$.
\end{enumerate}
Sum these two inequalities to get
$$\Vert d_2^* (\chi f) \Vert_{H_{0,\alpha-1}} \geq c \Vert \chi f \Vert_{H_{1,\alpha}},$$
with $c= \min \lbrace c_1 , c_2 \rbrace$. For the two inequalities to be true and the statement to hold one needs $\alpha < -1$ due to the component $\chi f^{\Vert}$, and  $m_0 > \mu > 2 \sqrt{\Vert Ric \Vert_{L^{\infty}}}$ for the component $\chi f^{\perp}$.
\end{proof}

We shall now use lemma \ref{lem:IntermediateInequality} to prove the inequality \ref{eq:MainInequality}. This is done as follows. One minimizes the functional
$$J(f)= \Vert d_2^*f \Vert^2_{H_{0,\alpha-1}},$$
subject to the constraint that $\Vert f \Vert_{H_{0,\alpha}}=1$. The minimizer $f$ satisfies an elliptic equation of the kind
$$d_2d_2^* f + g_1 d_2^* f + g_2 f =0,$$
where $g_1,g_2$ are smooth functions. A standard iteration argument in elliptic PDE theory gives an $L^{\infty}$ bound on the minimizer $f$, over compact sets. So that one can write $\Vert f \Vert_{L^{\infty}(K)} < C$, for $C>0$ some constant and $K$ some compact set containing $\cup_{i=1}^k B_{\epsilon^i_{out}}(p_i)$. Denote by $\sigma_{d_2^*}$ the symbol of the operator $d_2^*$, then 
\begin{eqnarray}\nonumber
\Vert d_2^* (\chi f) \Vert_{0,H_{\alpha-1}}^2 & = & \Vert \sigma_{d_2^*}(d \chi) f + \chi d_2^* f \Vert^2_{H_{0,\alpha-1}} \\ \nonumber
& \leq & \Vert \sigma_{d_2^*}(d \chi) f \Vert^2_{H_{0,\alpha-1}} + \Vert \chi d_2^* f \Vert^2_{H_{0,\alpha-1}} \\ \nonumber
& \leq & C_2 \Vert d \chi \Vert^2_{H_{0,\alpha-1}} + \Vert  d_2^* f \Vert^2_{H_{0,\alpha-1}}.
\end{eqnarray}
where in the first term one used the bound $\Vert f \Vert_{L^{\infty}} <C$ on $f$ and $C_2>0$ denotes some possibly larger constant in order to also bound $\vert \sigma_{d_2}\vert$. In the last term it was used that $\Vert \chi d_2^* f \Vert^2_{H_{0,\alpha-1}} \leq \Vert d_2^* f \Vert^2_{H_{0,\alpha-1}}$. Now notice that $\vert d \chi \vert$ is supported on the union of the annuli $B_{\epsilon^i_{out}}(p_i) \backslash B_{\epsilon^i_{in}}(p_i)$, where $\vert d \chi \vert \leq Cm_0^{1/2}$. However, in this region the spaces $H_{0,\alpha-1}$ use the rescaled norm $\vert (d \chi)_{m_0^{-1}} \vert_{g_{m_0^{-1}}} \leq C m_0^{-3/2}$, see equation \ref{FunctionSpaces1} and the definitions preceding it. Moreover, having into account the volume of the annuli one obtains $\Vert d \chi \Vert^2_{H_{0,\alpha-1}} \leq Cm_0^{-3}$, so that
$$\Vert  d_2^* f \Vert^2_{H_{0,\alpha-1}} \geq \Vert d_2^* (\chi f) \Vert_{H_{0,\alpha-1}}^2 - C_3 m_0^{-3},$$
for some $C_3>0$. Insert here the intermediate inequality from proposition \ref{lem:IntermediateInequality}
\begin{eqnarray}\nonumber
\Vert  d_2^* f \Vert^2_{H_{\alpha-1}} & \geq & c \Vert \chi f \Vert_{H_{1,\alpha}}^2 - C_3 m_0^{-3} \\ \nonumber
& \geq & c\Vert f \Vert_{H_{0,\alpha}}^2- c \sum_{i=1}^{k} \Vert f \Vert_{H_{0,\alpha}(B_{\epsilon^i_{out}(p_i)})}^2 - C_3 m_0^{-3}.
\end{eqnarray}
The first term is $c$ as $f$ satisfies the constraint that $\Vert f \Vert_{H_{0,\alpha}}=1$. The last two ones can be made arbitrarily small using the $L^{\infty}$ bound on $f$ over compact sets and by letting $m_0$ get big. Hence one obtains the inequality in equation \ref{eq:MainInequality}, then the discussion above it proves the main proposition \ref{prop:LinearEquation} of this section.

\subsection{The Nonlinear Terms}\label{NonlinearEquation}

Equipped with the surjectivity of the operator $d_2$, proved in proposition \ref{prop:LinearEquation}, we will now apply the Implicit Function Theorem to solve the monopole equation $\ast F_{A_{0}+a} - \nabla_{A_0 + a} \left( \Phi_0 + \phi \right)=0$. This can be written in the form
$$e_0 + d_2 (a,\phi) + N((a,\phi),(a,\phi))=0,$$
where $e_0$ is the error term of the approximate solution $(A_0,\Phi_0)$, $d_2$ the linearisation of the monopole equation at $(A_0,\Phi_0)$ and $N$ is the nonlinear term $N((a, \phi), (b, \psi))= \ast \frac{1}{2}\left[ a \wedge b \right] - \frac{1}{2}\left[ a ,\psi \right]- \frac{1}{2}[b,\phi]$. We shall look for a solutions $(a,\phi)$ in the image of the right inverse to $d_2$, i.e. $(a,\phi)= Qu$ for some $u$ a $1$-form with values in $\mathfrak{g}_P$, such that $u \in H_{0,\beta-1}$, with $\beta > -1$. Then, the equation is
\begin{equation}\label{eq:SchematicMonopole}
u + N(Qu,Qu)= - e_0,
\end{equation}
which is solved using the following particular version of the contraction mapping theorem

\begin{lemma}\label{lemma:Nonlinear}
Let $B$ be a Banach space and $q : B \rightarrow B$ a smooth map such that for all $u,v \in B$
$$\Vert q(u)-q(v) \Vert \leq k \left( \Vert u \Vert + \Vert v \Vert \right)  \Vert u - v \Vert,$$
for some fixed constant $k$ (i.e. independent of $u$ and $v$). Then, if $ \Vert v \Vert \leq \frac{1}{10k}$ there is a unique solution $u$ to the equation
\begin{equation}\label{quadraticeq}
u + q(u) =v,
\end{equation}
which satisfies the bound $\Vert u \Vert \leq 2 \Vert v \Vert$. 
\end{lemma}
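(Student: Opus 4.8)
The plan is to recast equation \ref{quadraticeq} as a fixed point problem and to solve it by the Banach contraction mapping theorem applied on a suitable closed ball. I would define the map $T : B \rightarrow B$ by $T(u) = v - q(u)$, so that fixed points of $T$ are precisely the solutions of $u + q(u) = v$. The argument takes place on the closed ball $\overline{B}_r = \lbrace u \in B \ \vert \ \Vert u \Vert \leq r \rbrace$ with $r = 2 \Vert v \Vert$, which is a complete metric space as a closed subset of the Banach space $B$. Before estimating, note that taking $v=0$ in the hypothesis and using that the relevant $q$ vanishes at the origin (in the application $q(u) = N(Qu,Qu)$ is quadratic, so $q(0)=0$) yields the bound $\Vert q(u) \Vert \leq k \Vert u \Vert^2$ for all $u \in B$.

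With this in hand, there are two things to verify. First, $T$ maps $\overline{B}_r$ into itself: for $u \in \overline{B}_r$,
$$\Vert T(u) \Vert \leq \Vert v \Vert + \Vert q(u) \Vert \leq \Vert v \Vert + k \Vert u \Vert^2 \leq \Vert v \Vert + 4k \Vert v \Vert^2 = \Vert v \Vert \left( 1 + 4k \Vert v \Vert \right),$$
and since $\Vert v \Vert \leq \frac{1}{10k}$ one has $4k \Vert v \Vert \leq \frac{2}{5}$, so $\Vert T(u) \Vert \leq \frac{7}{5} \Vert v \Vert \leq 2 \Vert v \Vert = r$. Second, $T$ is a contraction on $\overline{B}_r$: for $u, w \in \overline{B}_r$ the hypothesis gives
$$\Vert T(u) - T(w) \Vert = \Vert q(u) - q(w) \Vert \leq k \left( \Vert u \Vert + \Vert w \Vert \right) \Vert u - w \Vert \leq 4k \Vert v \Vert \Vert u - w \Vert \leq \tfrac{2}{5} \Vert u - w \Vert,$$
so the contraction constant is at most $\frac{2}{5} < 1$. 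The contraction mapping theorem then produces a unique fixed point $u \in \overline{B}_r$, which is the unique solution of equation \ref{quadraticeq} satisfying $\Vert u \Vert \leq 2 \Vert v \Vert$.

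There is no serious obstacle here: the scheme is the standard fixed point argument, and the only point requiring care is the bookkeeping of constants, namely checking that the single hypothesis $\Vert v \Vert \leq \frac{1}{10k}$ simultaneously secures both the self-mapping property and a contraction constant strictly below $1$ on the ball of radius $2 \Vert v \Vert$. The chosen threshold leaves comfortable slack, so any sufficiently small constant would suffice. The smoothness of $q$ is not needed for existence and uniqueness, but it would be used later to deduce smooth dependence of the solution $u$ on the parameters entering $v$ and the approximate solution.
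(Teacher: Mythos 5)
Your proof is correct and is exactly the argument the paper has in mind: the paper's own ``proof'' merely says to interpret the equation as a fixed point problem and apply the contraction mapping principle, keeping track of norms in the iteration (citing Jaffe--Taubes), and your write-up supplies precisely that bookkeeping on the ball of radius $2\Vert v \Vert$. You were also right to flag that the bound $\Vert q(u)\Vert \leq k \Vert u \Vert^2$ requires the implicit normalization $q(0)=0$, which holds in the intended application since $q(u)=N(Qu,Qu)$ is quadratic.
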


In this lemma one interprets \ref{quadraticeq} as a fixed point equation. Under those hypothesis, the contraction mapping principle applies and one obtains the estimate by keeping track of the norms in the iterations, see \cite{Jaffe1980}. This will be used to prove

\begin{proposition}\label{prop:MonopoleSolution}
Let $k \in \mathbb{N}$ and $(X,g)$ be asymptotically conical with $b^2(X)=0$. Then, there is $\mu >0$, such that for all $m \in H^0(\Sigma, \mathbb{R})$ with $m \geq \mu$ and $(p, \alpha , \theta) \in X^k(m) \times H^1(X, \mathbb{S}^1) \times \check{\mathbb{T}}^{k-1}$. There is a unique charge $k$ and mass $m$ monopole $(A, \Phi)$, which can be written as $(A,\Phi)= (A_0,\Phi_0) + Qu$ and satisfies
$$\Vert (A,\Phi) - (A_0,\Phi_0) \Vert_{H_{1,-\frac{1}{2}}} \leq C m^{-\frac{7}{4}},$$
where $(A_0,\Phi_0)=H(p, \alpha, \theta)$ is the approximate solution from proposition \ref{prop:AproximateSolution} and $C$ denotes a constant independent of $m$.
\end{proposition}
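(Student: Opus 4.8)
The plan is to recast the monopole equation as a fixed-point problem of exactly the shape handled by Lemma \ref{lemma:Nonlinear}. Following the reduction leading to equation \ref{eq:SchematicMonopole}, I fix $\beta = -\tfrac{1}{2}$ (so that $\beta > -1$, as required) and let $Q : H_{0,\beta-1} \to H_{1,\beta}$ be the uniformly bounded right inverse of $d_2$ furnished by Proposition \ref{prop:LinearEquation}. Seeking the correction in the form $(a,\phi) = Qu$ with $u \in H_{0,\beta-1}$ and using $d_2 Q = \mathrm{id}$, the equation $e_0 + d_2(a,\phi) + N((a,\phi),(a,\phi)) = 0$ becomes
\[
u + q(u) = -e_0, \qquad q(u) := N(Qu,Qu).
\]
I then take $B = H_{0,\beta-1}$ and $v = -e_0$ in Lemma \ref{lemma:Nonlinear}, so the two things to check are a quadratic estimate for $q$ with constant $k$ independent of the mass, and the smallness $\|e_0\|_{H_{0,\beta-1}} \le \tfrac{1}{10k}$.

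For the quadratic estimate, bilinearity of $N$ gives the telescoping identity
\[
q(u) - q(u') = N\big(Q(u-u'),\,Qu\big) + N\big(Qu',\,Q(u-u')\big).
\]
Because $N$ is built from pointwise wedge products and brackets, the essential analytic ingredient is a multiplication inequality of the form
\[
\|N(s_1,s_2)\|_{H_{0,\beta-1}} \le C\,\|s_1\|_{H_{1,\beta}}\,\|s_2\|_{H_{1,\beta}},
\]
with $C$ independent of $m_0$. Granting this and inserting the uniform bound $\|Q\| \le C_Q$ from Proposition \ref{prop:LinearEquation} yields $\|q(u)-q(u')\| \le C\,C_Q^2\,(\|u\|+\|u'\|)\,\|u-u'\|$, so one may take $k = C\,C_Q^2$, which does not depend on the mass.

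To control $v = -e_0$ I invoke Corollary \ref{cor:ErrorEstimate}: the error is supported on $\cup_i B_{\epsilon^i_{out}}(p_i)$, there the weight is $W_0 = m_0^{-2}$ and the rescaled pointwise bound reads $|W_0\,e_0|_g = m_0^{-2}|e_0|_g \le c\,m_0^{-1}$, while $\mathrm{vol}(B_{\epsilon^i_{out}}(p_i)) = O(m_0^{-3/2})$. Hence, by the definition of the norm in equation \ref{FunctionSpaces1},
\[
\|e_0\|_{H_{0,\beta-1}}^2 \le \sum_{i=1}^{k} \int_{B_{\epsilon^i_{out}}(p_i)} |W_0\,e_0|_g^2\, dvol_g \le k\,(c\,m_0^{-1})^2\,O(m_0^{-3/2}) = O(m_0^{-7/2}),
\]
so $\|e_0\|_{H_{0,\beta-1}} = O(m_0^{-7/4})$, which is below $\tfrac{1}{10k}$ once $m_0$ is large. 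Lemma \ref{lemma:Nonlinear} then produces a unique $u$ with $\|u\|_{H_{0,\beta-1}} \le 2\|e_0\|_{H_{0,\beta-1}} = O(m_0^{-7/4})$; setting $(a,\phi) = Qu$ gives $\|(A,\Phi) - (A_0,\Phi_0)\|_{H_{1,-1/2}} = \|Qu\|_{H_{1,\beta}} \le C_Q\|u\| = O(m^{-7/4})$, the asserted estimate. A bootstrapping argument upgrades the weak solution to a smooth one, and since $\beta = -\tfrac12 > -1$ the correction decays fast enough to leave the charge $k$ and mass $m$ asymptotics of $(A_0,\Phi_0)$ unchanged.

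The step I expect to be the main obstacle is the multiplication inequality with a constant uniform in the mass. The trouble is that the norms $H_{n,\beta}$ carry mass-dependent weights: on the small balls the fields have been rescaled by $m_0^{-1}$, and in the transverse directions the $H_{1,\beta}$-norm measures $ad_{\Phi_0}$, i.e.\ multiplication by a quantity of size $\sim m_0$. Estimating products of longitudinal and transverse components, and products concentrated on the rescaled balls, uniformly in $m_0$ therefore requires the scale-invariant Sobolev embeddings of the type in Lemma \ref{Kato} together with careful bookkeeping of the weights $W_n$ against the factors of $m_0$ introduced by $ad_{\Phi_0}$ — precisely the cancellations for which the function spaces of Section \ref{FunctionSpaces} were designed.
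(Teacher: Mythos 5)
Your proposal is correct and follows essentially the same route as the paper: the fixed-point reformulation via Lemma \ref{lemma:Nonlinear} with $B=H_{0,-3/2}$, the quadratic estimate for $q(u)=N(Qu,Qu)$ via a bilinear multiplication inequality combined with the uniform bound on $Q$, and the $\Vert e_0 \Vert_{H_{0,-3/2}} = O(m_0^{-7/4})$ computation from Corollary \ref{cor:ErrorEstimate} are exactly the paper's steps. The multiplication inequality you defer and correctly flag as the delicate point is precisely the paper's Lemma \ref{lemma:HSInequalities}, proved there by splitting into the rescaled balls and the exterior region and tracking the weights $W_n$ and the $ad_{\Phi_0}$ factors as you anticipate.
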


\begin{lemma}\label{lemma:HSInequalities}
Let $\beta = - 1/2 \not\in D(d+d^*)$. Then, there is $C >0$, such that for all $f,g \in H_{1, \beta}$
$$\Vert N(f,g) \Vert_{H_{0, \beta-1}} \leq C \Vert f \Vert_{H_{1,\beta}} \Vert g \Vert_{H_{1,\beta}}.$$
\end{lemma}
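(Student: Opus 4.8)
The plan is to estimate $N(f,g)$ separately on the two regions that define the norms of Definition \ref{def:FunctionSpaces}: the balls $\cup_i B_1(p_i)$ and their complement $U=X\setminus\cup_i B_1(p_i)$. Since $N$ is a \emph{pointwise} bilinear map built from the Lie bracket, no derivatives of $f,g$ enter $N(f,g)$ and it suffices to bound products of components. The decisive algebraic observation is that on $U$, where $\mathfrak{g}_E=\underline{\mathbb{R}}\oplus L^2$, the bracket satisfies $[\underline{\mathbb{R}},\underline{\mathbb{R}}]=0$, $[\underline{\mathbb{R}},L^2]\subset L^2$ and $[L^2,L^2]\subset\underline{\mathbb{R}}$. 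Hence the longitudinal output $N(f,g)^{\Vert}$ receives contributions only from the product $f^\perp\cdot g^\perp$, while the transverse output $N(f,g)^\perp$ receives only $f^{\Vert}\cdot g^\perp$ and $f^\perp\cdot g^{\Vert}$; in particular every surviving product carries at least one transverse factor. This is precisely what makes the estimate work, because the transverse components enjoy an extra $m_0$-weight coming from the large Higgs field.

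On $U$ I would argue as follows. For $\beta=-\tfrac12$ the output weight $W_0$ of $H_{0,\beta-1}$ equals $\rho^0=1$ far out, so the relevant norm is the plain $L^2$ norm (weighted only in the transition to $K$). From Lemma \ref{Kato} and the definition of $H_{1,\beta}$ one has $\Vert f^{\Vert}\Vert_{L^6}\le c_K\Vert\nabla_{A_0}f^{\Vert}\Vert_{L^2}\le C\Vert f\Vert_{H_{1,\beta}}$, and likewise $\Vert f^\perp\Vert_{L^6}\le C\Vert f\Vert_{H_{1,\beta}}$. The transverse norm additionally controls $m_0\Vert f^\perp\Vert_{L^2}\le C\Vert f\Vert_{H_{1,\beta}}$, since in $H_{1,\beta}$ the $j=0$ transverse term is $\vert ad_{\Phi_0}f^\perp\vert\ge\tfrac{m_0}{2}\vert f^\perp\vert$ by Lemma \ref{MinimumDirac}. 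Interpolating, $\Vert f^\perp\Vert_{L^3}\le\Vert f^\perp\Vert_{L^2}^{1/2}\Vert f^\perp\Vert_{L^6}^{1/2}\le Cm_0^{-1/2}\Vert f\Vert_{H_{1,\beta}}$. H\"older then gives, for each surviving product,
$$\Vert f^\perp g^\perp\Vert_{L^2}\le\Vert f^\perp\Vert_{L^6}\Vert g^\perp\Vert_{L^3}\le Cm_0^{-1/2}\Vert f\Vert_{H_{1,\beta}}\Vert g\Vert_{H_{1,\beta}},$$
and identically $\Vert f^{\Vert}g^\perp\Vert_{L^2}\le\Vert f^{\Vert}\Vert_{L^6}\Vert g^\perp\Vert_{L^3}\le Cm_0^{-1/2}\Vert f\Vert_{H_{1,\beta}}\Vert g\Vert_{H_{1,\beta}}$. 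Thus the contribution of $U$ to $\Vert N(f,g)\Vert_{H_{0,\beta-1}}$ is $O(m_0^{-1/2})$, in particular bounded by $C\Vert f\Vert_{H_{1,\beta}}\Vert g\Vert_{H_{1,\beta}}$.

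On each ball $B_1(p_i)$ the bundle does not split and $N(f,g)$ is a genuine quadratic expression in an $\mathfrak{su}(2)$-configuration, so the estimate is an instance of the Sobolev multiplication $H^1\cdot H^1\hookrightarrow L^2$ valid in dimension $3$. To get a constant independent of $m_0$ I would pass to the rescaled picture of Corollary \ref{cor:ErrorEstimate}, pulling back by the scaling $\exp_{m_0^{-1}}$ and using the rescaled metric $g_{m_0^{-1}}=m_0^2\exp_{m_0^{-1}}^*g$, under which the mass-$\lambda_i=O(m_0)$ BPS monopole becomes a unit-mass BPS monopole of bounded geometry (Proposition \ref{scaling}). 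The weights $W_n=m_0^{-2+n}$ on the balls are exactly those that turn the $H_{n,\beta}$ norm into the standard rescaled Sobolev norm (as recorded in the remark after Definition \ref{def:FunctionSpaces}), so in rescaled coordinates $\Vert N(f,g)\Vert_{L^2}\le C\Vert f\Vert_{H^1}\Vert g\Vert_{H^1}$ holds with fixed $C$; undoing the rescaling reproduces the claimed weighted inequality with the same constant. Combining the two regional bounds yields the lemma.

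The two H\"older/Sobolev estimates above are routine; the delicate point, and the one I expect to be the main obstacle, is \emph{uniformity in $m_0$ across the interface}. Concretely one must verify that the weight powers $W_n=m_0^{-2+n}$ match the Jacobian factors produced by $\exp_{m_0^{-1}}$ for both $0$- and $1$-forms (the factors $\vert f_\epsilon\vert_{g_\epsilon}=\epsilon^2\vert f\vert_g$ with $\epsilon=m_0^{-1}$), and that the transition annuli $B_1(p_i)\setminus B^i_{\epsilon_{out}}(p_i)$, where $W_n$ interpolates between $m_0^{-2+n}$ and $1$ and where the two descriptions of the norm overlap, contribute no term growing with $m_0$. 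This bookkeeping, rather than any single inequality, is the crux: once the weights are seen to reproduce the rescaled Sobolev norm, the constant $C$ is manifestly mass-independent.
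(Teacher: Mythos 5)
Your proposal is correct and follows essentially the same route as the paper: split $N(f,g)$ into the contribution near the points and the contribution on the exterior region, observe that the bracket structure $[L^2,L^2]\subset\underline{\mathbb{R}}$, $[\underline{\mathbb{R}},L^2]\subset L^2$ kills the purely longitudinal product so that every surviving term carries a transverse factor weighted by $\Phi_0$, and then apply H\"older plus the Sobolev/Kato inequality, with the rescaled ($m_0$-weighted) norms making the interior constant mass-independent. The only cosmetic differences are your choice of H\"older exponents on the exterior ($L^6\times L^3$ with $L^2$--$L^6$ interpolation for the transverse factor, versus the paper's $L^2_1\hookrightarrow L^3$ applied to $\phi_D f^{\perp}$) and your explicit attention to the transition annuli, which the paper elides.
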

\begin{proof}
That $\beta = - \frac{1}{2} \not\in D(d+d^*)$ follows from the first item in lemma \ref{DiracBoundII} and the fact that the Laplacian $\Delta_{\Sigma}$ has positive eigenvalues. To prove the inequality, let $\chi$ be the bump function \ref{eq:BumpFunction} from last section and write $N(f,g)$ as the sum of two components, one supported inside the balls $B_{\epsilon}(p_i)$ and the other one outside these. We shall prove the inequality separately for each of these two components.
\begin{enumerate}
\item If $f,g$ are supported inside the ball $B_{\epsilon^i_{out}}(p_i)$, then using both the Hold\"er and Sobolev inequalities
\begin{eqnarray}\nonumber
\Vert N(f,g) \Vert_{H_{0, \beta-1}} & = & \Vert m_0^{-2} fg \Vert_{L^2(B_{\epsilon}(p_i))} \leq  C \Vert m_0^{-1} f \Vert_{L^4(B_{\epsilon}(p_i))} \Vert m_0^{-1} g \Vert_{L^4(B_{\epsilon}(p_i))} \\ \nonumber
& \leq & C' \Vert  m_0^{-1} f \Vert_{L^2_1(B_{\epsilon}(p_i))} \Vert  m_0^{-1} g \Vert_{L^2_1(B_{\epsilon}(p_i))} \\ \nonumber
& \leq & C''  \Vert f \Vert_{H_{1, \beta}} \Vert  g \Vert_{H_{1, \beta}}.
\end{eqnarray}
Moreover the constant $C''>0$ is independent of $m_0$.

\item For $f,g$ supported on the big open set $U= X \backslash \cup_{i=1}^k \overline{B_{\epsilon}(p_i)}$, we can write $f= f^{\Vert} + f^{\perp}$, and similarly for $g$. Then
$$N(f,g)= \left( N(f^{\Vert}, g^{\perp}) + N(f^{\perp}, g^{\Vert}) \right) + N(f^{\perp}, g^{\perp}),$$
where the first two terms have values in $\mathfrak{g}_P^{\perp}$, while the third has values in $\mathfrak{g}_{P}^{\Vert}$. We shall now bound these separately. For $\beta \leq - \frac{1}{2}$ the Hold\"er and Sobolev inequalities give
\begin{eqnarray}\nonumber
\Vert N(f^{\perp},g^{\Vert}) \Vert_{H_{0, \beta-1}} & \leq & C \Vert  \phi_D N(f^{\perp},g^{\Vert}) \Vert_{L^2} \\ \nonumber
& \leq & C \Vert \rho^{\beta+ \frac{3}{2} - \beta -\frac{3}{3} - \frac{3}{6}}\phi_D N( f^{\perp},g^{\Vert}) \Vert_{L^2} \\ \nonumber
& \leq & C' \Vert \rho^{\beta+ \frac{3}{2}-1} \phi_D f^{\perp}\Vert_{L^{3}} \Vert \rho^{- \beta - \frac{3}{6}}g^{\Vert} \Vert_{L^{6}} \\ \nonumber
& \leq &  C' \Vert  \phi_D f^{\perp}\Vert_{L^{3}} \Vert g^{\Vert} \Vert_{L^{6}_{0, \beta}} \\ \nonumber
& \leq &  C'' \Vert  \phi_D f^{\perp}\Vert_{L^2_1} \Vert g^{\Vert} \Vert_{L^{2}_{1, \beta}} ,
\end{eqnarray}
where it was used that $\beta+ \frac{3}{2}-1= \beta + \frac{1}{2} \leq 0$, for $\beta \leq -\frac{1}{2}$ and that $\Vert g^{\Vert} \Vert_{L^{6}_{0, \beta}} = \Vert \rho^{- \beta - \frac{3}{6}}g^{\Vert} \Vert_{L^{6}}$, by definition. Since the weighted $H$-norm uses standard $L^2_1$ spaces along the components in $\mathfrak{g}_P^{\perp}$ and weighted norms along the $\mathfrak{g}_P^{\Vert}$ one can bound the last item above as
\begin{eqnarray}\nonumber
\Vert N(f^{\perp},g^{\Vert}) \Vert_{H_{0, \beta-1}} & \leq &  C'' \Vert  f^{\perp}\Vert_{H_{1, \beta}} \Vert g^{\Vert} \Vert_{H_{1, \beta}} .
\end{eqnarray}
The term $N(f^{\Vert}, g^{\perp})$ follows by a similar computation. To evaluate the other term, which lies in $\mathfrak{g}^{\Vert}$ note that for $\beta \geq -\frac{1}{2}$
\begin{eqnarray}\nonumber
\Vert N(f^{\perp},g^{\perp}) \Vert_{H_{0, \beta-1}} & \leq & C \Vert \rho^{-\beta+1-\frac{3}{2}} N(f^{\perp},g^{\perp}) \Vert_{L^2} \\ \nonumber
& \leq & C \Vert N( f^{\perp},g^{\perp}) \Vert_{L^2},
\end{eqnarray}
hence one can once again use the H\"older and Sobolev inequalities to bound the above quantity by $\frac{1}{m_0^2}\Vert \phi_D f^{\perp} \Vert_{L^2_1} \Vert \phi_D g^{\perp} \Vert_{L^2_1} \leq C \Vert f \Vert_{H_{1, \beta}} \Vert g \Vert_{H_{1, \beta}}$, for $m_0 > \sqrt{C}$.
\end{enumerate}
\end{proof}

\begin{proof}(of proposition \ref{prop:MonopoleSolution})
We start by showing that $q(u)=N(Qu,Qu)$ satisfies the hypothesis stated in lemma \ref{lemma:Nonlinear}, for $B=H_{0,\beta-1}$ and $\beta=-1/2$. This is done by computing
\begin{eqnarray}\nonumber
\Vert N(Qu,Qu) - N(Qv,Qv) \Vert_{H_{0,-\frac{3}{2}}} & = & \Vert  N(Q(u+v),Q(u-v)) \Vert_{H_{0,-\frac{3}{2}}} \\ \nonumber
& \leq & C   \Vert  Q(u+v) \Vert_{H_{1,-\frac{1}{2}}} \Vert  Q(u-v) \Vert_{H_{1,-\frac{1}{2}}}\\ \nonumber
& \leq &  C \left( \Vert  u \Vert_{H_{0,-\frac{3}{2}}} + \Vert  v \Vert_{H_{0,-\frac{3}{2}}} \right) \Vert  u-v \Vert_{H_{0,-\frac{3}{2}}},
\end{eqnarray}
where the constant $C$ may possibly change from one line to the other. The only hypothesis of lemma \ref{lemma:Nonlinear} which remains to be checked is that it is possible to make $\Vert e_0 \Vert_{H_{0,-\frac{3}{2}}} \leq \frac{1}{10 C}$. This is an immediate consequence of corollary \ref{cor:ErrorEstimate}. This states the existence of an approximate monopole $H(p, \alpha , \theta)$, whose error term $e_0$ satisfies $W_0 \vert e_0 \vert_{g} \leq c m_0^{-1}$, for some constant $c>0$ not depending on $m_0$. Moreover, $\vert e_0 \vert$ is supported on $\cup_{i=1}^k B_{\epsilon^i_{out}}(p_i)$
\begin{eqnarray}\nonumber
\Vert e_0 \Vert_{H_{0,-\frac{3}{2}}}^2 & = & \int_{X} \vert W_0 e_0 \vert^2 \leq c k m_0^{-2} Vol(B_{\epsilon^i_{out}(p_i)}) \leq ck m_0^{-7/2}
\end{eqnarray}
and so for big $m_0$ one obtains a solution $u$ to equation \ref{eq:SchematicMonopole}. In fact from lemma \ref{lemma:Nonlinear}, there is a constant $c'$ such that this is the unique solution $u$ to equation \ref{eq:SchematicMonopole} with $\Vert  u \Vert_{H_{0,-\frac{3}{2}}} \leq c' m_0^{-7/4}$. Then by setting $(A,\Phi)=(A_0+a,\Phi_0+\phi)$, where $(a,\phi)= Qu$, one obtains a monopole which is the unique one of this form satisfying
$$\Vert (A,\Phi) - (A_0,\Phi_0) \Vert_{H_{1,-\frac{1}{2}}} \leq C m_0^{-7/4}.$$
\end{proof}


\section{Uniqueness in a Gauge Orbit}

Proposition \ref{prop:MonopoleSolution} can be used to make the following

\begin{definition}
Let $k\in \mathbb{N}$ and $(X,g)$ be asymptotically conical with $b^2(X)=0$ and $m \in H^0(\Sigma, \mathbb{R}) \cong \mathbb{R}$, be such that $m \geq \mu$, so that proposition \ref{prop:MonopoleSolution} applies and defines a map
\begin{equation}
\tilde{h}: X^k(m) \times  H^1(X, \mathbb{S}^1) \times \check{\mathbb{T}}^{k-1} \rightarrow M_{k,m}.
\end{equation}
\end{definition}

The image of the map $\tilde{h}$ above corresponds to monopoles $(A,\Phi)$, which can be written as $(A_0,\Phi_0) + Qu$, where $\Vert Qu \Vert_{H_{1, -1/2} } \leq Cm^{-7/4}$ and $(A_0,\Phi_0)$ is an approximate solution in the image of the map $H$ from proposition \ref{prop:AproximateSolution}. The goal of this section is to show that given $(A, \Phi)$ in the image of $\tilde{h}$, all gauge equivalence classes of monopoles close to $(A, \Phi)$ come from this construction, i.e. to show

\begin{theorem}\label{th:MainTh2}
The map $\tilde{h}$ descends to a local diffeormorphism
\begin{equation}
h: X^k(m) \times H^1(X, \mathbb{S}^1) \times \check{\mathbb{T}}^{k-1} \rightarrow \mathcal{M}_{k,m}.
\end{equation}
\end{theorem}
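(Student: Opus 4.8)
The plan is to obtain the theorem from the inverse function theorem applied to the smooth structure on $\mathcal{M}_{k,m}$, once three things are in place: that $\tilde h$ is smooth so that $h=\pi\circ\tilde h$ (with $\pi:M_{k,m}\to\mathcal{M}_{k,m}$ the quotient projection) makes sense and is smooth; that $\mathcal{M}_{k,m}$ is a smooth manifold of the expected dimension near the image; and that the differential of $h$ is an isomorphism at each point. First I would record the smoothness of $\tilde h$: the approximate solution $H(p,\alpha,\theta)$ of proposition \ref{prop:AproximateSolution} depends smoothly on $(p,\alpha,\theta)$ (the gluing data, partition of unity and inserted BPS fields all vary smoothly), and the correction $Qu$ produced by the contraction argument of proposition \ref{prop:MonopoleSolution} is unique and depends smoothly on the approximate solution through the uniformly bounded right inverse $Q$ of proposition \ref{prop:LinearEquation}. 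Hence $h=\pi\circ\tilde h$ is a well-defined smooth map into the moduli space.

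Next I would show that $\mathcal{M}_{k,m}$ is smooth near the image of $h$, of dimension $4k+b^1(X)-1$. Surjectivity of $d_2$ on $H_{1,\beta}$ (proposition \ref{prop:LinearEquation}) means that the second cohomology of the deformation complex \ref{Complex} vanishes, so the monopoles in the image are unobstructed and a neighbourhood in $\mathcal{M}_{k,m}$ is modelled on $\ker D=\ker d_2\cap\ker d_1^*$. Its dimension is the index rewritten in the remark following theorem \ref{MainTheorem2} as $4k+b^1(X)-b^2(X)-1$, which equals $4k+b^1(X)-1$ since $b^2(X)=0$. This matches $\dim\big(X^k(m)\times H^1(X,\mathbb{S}^1)\times\check{\mathbb{T}}^{k-1}\big)=3k+b^1(X)+(k-1)$, so a local diffeomorphism is at least dimensionally possible.

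The crux is to show $dh$ is injective at each parameter point; equal dimensions then upgrade this to an isomorphism, and the inverse function theorem gives that $h$ is a local diffeomorphism. Differentiating the family $\tilde h$ along a parameter direction $v$ gives $\partial_v\tilde h\in\ker d_2$ automatically, since $\tilde h$ is a family of monopoles, and $dh(v)$ is its $L^2$-orthogonal projection to the Coulomb slice, $dh(v)=\partial_v\tilde h-d_1\xi_v\in\ker D$. I would compute the leading part of $\partial_v\tilde h$ on the approximate solution: varying $p_i\in X^k(m)$ produces the translation zero modes of the $i$-th scaled BPS monopole, varying $\theta\in\check{\mathbb{T}}^{k-1}$ produces its $U(1)$ phase zero modes, and varying $\alpha\in H^1(X,\mathbb{S}^1)$ produces a harmonic $1$-form in the longitudinal component $\mathfrak{g}_E^{\Vert}$ representing the corresponding class. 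Because the points are separated by at least $4m^{-1/2}$ while the BPS fields decay exponentially on the scale $\lambda_i^{-1}=O(m^{-1})$, the position and phase modes are localized in disjoint regions and the $H^1$ mode is spread over the abelian region, so these model contributions are mutually nearly $L^2$-orthogonal and their Gram matrix is close to block-diagonal and uniformly invertible for $m$ large. It then remains to see that this near-independence survives both the projection to $\ker D$ and the passage from $(A_0,\Phi_0)$ to the true monopole $(A_0,\Phi_0)+Qu$: the uniform bound on $Q$ and the bound $\Vert Qu\Vert_{H_{1,-1/2}}\le Cm^{-7/4}$ of proposition \ref{prop:MonopoleSolution} show that the true tangent vectors differ from the model ones by amounts negligible against the Gram-matrix gap, so $dh$ stays injective.

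The main obstacle is precisely this last, uniform-in-$m$ control of the differential. Concretely one must check that the relative phases in $\check{\mathbb{T}}^{k-1}$ give variations that are not pure gauge, after quotienting the single diagonal $\mathbb{S}^1$ that acts by the genuine gauge transformation $e^{ia\Phi_0}$ identified in the remark following proposition \ref{prop:AproximateSolution}, and that the translation and flat-connection modes are likewise not absorbed into $d_1\xi_v$. This amounts to a uniform lower bound on the $H_{1,-1/2}$-norms of the projected model modes, and it is here that the exponential localization of the BPS zero modes and the separation hypothesis defining $X^k(m)$ must be used together, exactly as they enter the error estimate of section \ref{ApproximateSolution} and the linear estimate of section \ref{LinearEquation}.
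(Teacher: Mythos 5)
Your overall skeleton coincides with the paper's: model $\mathcal{M}_{k,m}$ near the image on the slice $\ker(d_1^*\oplus d_2)_{-1/2}$, use surjectivity (proposition \ref{prop:LinearEquation}) together with Kottke's index formula to match dimensions, prove the differential of the projected map is injective uniformly in $m$, and then absorb the correction $Qu$ as an $O(m^{-7/4})$ perturbation. Where you genuinely diverge is in the injectivity step. You propose to write down the model tangent vectors explicitly (translation and phase zero modes of each scaled BPS monopole, harmonic $1$-forms for the $H^1(X,\mathbb{S}^1)$ directions), show they are nearly $L^2$-orthogonal by localization, and control the resulting Gram matrix. The paper (lemma \ref{lem:FirstLemmaLD}) avoids any explicit description of these modes: it defines the norm on the parameter space by $\Vert v\Vert_P=\Vert dH_x(v)\Vert_{H_{1,-1/2}}$, observes that $(d_1^*\oplus d_2)(dH_x(v))$ has norm at most $c(m)\Vert v\Vert_P$ with $c(m)\to 0$ because $H$ is a family of approximate solutions, and uses closedness of the image to get $\Vert \pi_h dH_x(v)-dH_x(v)\Vert\le c(m)\Vert v\Vert_P$, so that $\pi_h\circ dH_x$ is injective for large $m$ and then an isomorphism by the index count. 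The soft argument is shorter and sidesteps the Gram matrix entirely; your computational route would, if completed, yield more information (e.g., the leading behaviour of the $L^2$ metric on the image), at the cost of substantially more estimation.

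Two points in your plan need repair. First, the position modes $\partial_{p_i}\tilde h$ are not exponentially localized: moving $p_i$ moves the Dirac singularity of $\phi_D$, so these modes carry a long-range abelian (longitudinal) tail decaying only polynomially. They are therefore not supported in disjoint regions as you assert, and the near-block-diagonality of the Gram matrix must be argued in the weighted norms rather than by disjointness of supports. Second, to conclude that $dh$ itself (rather than $d(\pi_h\circ H)$) is injective you need a bound on the parameter-derivative of $Qu$, not merely on $\Vert Qu\Vert_{H_{1,-1/2}}$; this requires differentiating the fixed-point equation \ref{eq:SchematicMonopole} in the parameters and reusing the uniform bound on $Q$. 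The paper partially sidesteps this by arguing at the level of the map -- a $C^0$-small perturbation of a diffeomorphism onto a ball of fixed radius $R$ still covers the smaller ball $B_{R'}$ -- together with elliptic regularity for the higher-order bounds; whichever route you take, this step should be made explicit.
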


The rest of this section is dedicated to prove this theorem. We shall now introduce some notation, let $P=  X^k(m) \times  H^1(X, \mathbb{S}^1) \times \check{\mathbb{T}}^{k-1}$ denote the parameter space, $x \in P$ be a point and $(A, \Phi)= \tilde{h}(x)$, then $[(A,\Phi)] =h(x) \in \mathcal{M}_{k,m}$ is the corresponding gauge equivalence class. Moreover, $(A, \Phi)=(A_0,\Phi_0) + Qu$, where $(A_0, \Phi_0)=H(x)$ and $\Vert Qu \Vert_{H_{1, -1/2} } \leq Cm^{-7./4}$.\\
Invoking the implicit function theorem, we may take a model for $\mathcal{M}_{k,m}$ as a slice in $C_{k,m}$. This identifies a neighborhood of $h(x)$ in $\mathcal{M}_{k,m}$ with a neighborhood of zero in $\ker (d_1^* \oplus d_2)_{-1/2} \subset T_{\tilde{h}(x)}C_{k,m}$. Since $d_1^* \oplus d_2$ is Fredholm \cite{Kottke13}, we can fix an orthonormal basis of $\ker (d_1^* \oplus d_2)_{-1/2}$ and define a projection
$$\pi_h : T_{\tilde{h}(x)}C_{k,m} \rightarrow \ker (d_1^* \oplus d_2)_{-\frac{1}{2}}.$$
Then it is enough to show that there are neighborhoods $U$ of $x \in P$ and $V$ of $0 \in \ker (d_1^* \oplus d_2)_{-1/2}$, such that
$$\pi_h \circ \tilde{h}: U \rightarrow V$$
is a diffeomorphism.

\begin{lemma}\label{lem:FirstLemmaLD}
There is $\mu >0$ and $R >0$ such that for all $m \geq \mu$ the map
$$\pi_h \circ H: P \rightarrow \ker (d_1^* \oplus d_2)_{-1/2},$$
maps an open neighborhood $U'$ of $x \in P$ onto $B_R$ a ball of radius $R$ in $\ker (d_1^* \oplus d_2)_{-1/2} \subset C_{k,m}$, equipped with the metric $H_{1,-1/2}$, i.e.
$$B_R = \lbrace u \in \ker (d_1^* \oplus d_2)_{-1/2} \ \vert \ \Vert u \Vert_{H_{1, -1/2}} < R \rbrace.$$
\end{lemma}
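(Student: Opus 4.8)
The plan is to prove that the differential $d(\pi_h \circ H)_x$ is a linear isomorphism of $T_x P$ onto $\ker(d_1^* \oplus d_2)_{-1/2}$ whose inverse is bounded uniformly in $m$, and then to feed this into a quantitative inverse function theorem to produce both the neighborhood $U'$ and a ball $B_R$ of radius $R$ independent of $m$. The starting point is the dimension count: by the rewriting of Kottke's formula \cite{Kottke13} one has $\dim \ker(d_1^* \oplus d_2)_{-1/2} = 4k + b^1(X) - 1$, and this equals $\dim P = 3k + b^1(X) + (k-1)$ precisely because $b^2(X)=0$. Thus $d(\pi_h \circ H)_x$ is a map between spaces of equal dimension, and it suffices to establish injectivity with a uniform lower bound.

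First I would write out the image under $dH_x$ of a basis of $T_x P$ adapted to the three factors: the $3k$ translation vectors $v^{t}_{i,a} = \partial_{p_i^a} H$ obtained by moving the $i$-th glued BPS monopole, the $k-1$ phase vectors $v^{p}_j = \partial_{\theta_j} H$ rotating the framings (the overall phase being gauge and so excluded by passing to $\check{\mathbb{T}}^{k-1}$), and the $b^1(X)$ flat vectors $v^{f}_\ell = \partial_{\alpha_\ell} H$ varying the flat twist, represented by the harmonic one-forms on $X$ valued in the central part of $\mathfrak{g}_E$. Away from the gluing annuli each of these is an exact zero mode of $d_1^* \oplus d_2$: the translation and phase modes are the standard normalizable zero modes of the charge-one BPS monopole, whose framed moduli space is $\mathbb{R}^3 \times \mathbb{S}^1$ (Example \ref{1MonopoleExample}), while the flat modes are harmonic since they come from $H^1(X,\mathbb{R})$. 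Consequently $(d_1^* \oplus d_2)v$ is supported on the annuli $B_{\epsilon^i_{out}}(p_i)\setminus B_{\epsilon^i_{in}}(p_i)$ and, by the same rescaled cutoff estimates used in Lemma \ref{prop:ErrorEstimate} and Corollary \ref{cor:ErrorEstimate}, is small in $H_{0,-3/2}$ relative to $\|v\|_{H_{1,-1/2}}$. Hence each $v$ is an approximate zero mode.

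The core of the argument is then to show that the Gram matrix of these vectors in the $H_{1,-1/2}$ inner product is uniformly invertible. The separation hypothesis $\mathrm{dist}(p_i,p_j) > 4 m^{-1/2}$, together with the exponential decay of the transverse BPS data from Lemma \ref{lem:BPSestimate}, forces the overlap between modes localized at distinct points to be exponentially small; the translation–phase block is therefore uniformly close to block diagonal, each $4\times 4$ diagonal block being the nondegenerate inner-product matrix of the charge-one zero modes, computed in the rescaled metric $g_{m_0^{-1}}$ so as to be $m$-independent. Since the BPS modes concentrate at scale $m^{-1/2}$ near the points while the flat modes are bounded and spread over all of $X$, the flat–BPS cross terms are $o(1)$ as $m\to\infty$, and the flat–flat block is the fixed nondegenerate pairing on $H^1(X,\mathbb{R})$. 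Thus the full Gram matrix is uniformly positive definite. Because each $v$ is an approximate zero mode, $\pi_h v = v + e_v$ with $\|e_v\|_{H_{1,-1/2}}$ uniformly small, so the Gram matrix of the projected vectors $\pi_h v$ is a uniformly small perturbation of the previous one and remains invertible; this yields the uniform lower bound on $d(\pi_h\circ H)_x$.

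Finally, a quantitative inverse function theorem — using in addition a uniform bound on the second derivative of $\pi_h\circ H$, which follows from the explicit dependence of the gluing data on the parameters — gives a neighborhood $U'$ of $x$ mapped diffeomorphically onto a ball $B_R$ in $\ker(d_1^*\oplus d_2)_{-1/2}$ with $R$ independent of $m$. The main obstacle is precisely this uniformity in $m$: since the glued monopoles concentrate at scale $m^{-1/2}$, all norms must be measured in the rescaled spaces $H_{n,\beta}$, which were engineered so that the BPS zero modes have $m$-independent size, and one must verify that the off-diagonal overlaps, the projection errors $e_v$, and the Hessian bound all stay controllably small as $m\to\infty$.
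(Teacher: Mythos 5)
Your proposal is correct and arrives at the same conclusion, but it handles the key injectivity step by a genuinely different (and more computational) route than the paper. Both arguments share the same skeleton: the dimension count via Kottke's index formula together with surjectivity of $d_1^*\oplus d_2$, the observation that $dH_x(v)$ is an approximate zero mode with error supported on the gluing annuli, and the deduction that the projection error $\pi_h\, dH_x(v)-dH_x(v)$ is small because it is orthogonal to $\ker(d_1^*\oplus d_2)$ and the operator is uniformly bounded below on that orthogonal complement. Where you diverge is in establishing that $dH_x$ itself is uniformly invertible: you propose an explicit adapted basis (translation, phase, and flat modes) and a Gram matrix analysis, using the separation hypothesis and the exponential decay from lemma \ref{lem:BPSestimate} to show the matrix is uniformly positive definite in the rescaled norms. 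The paper sidesteps this entirely by \emph{defining} the norm on $T_xP$ as the pullback $\Vert v\Vert_P=\Vert dH_x(v)\Vert_{H_{1,-1/2}}$, so that $dH_x$ is an isometry onto its image by fiat and the lower bound $\Vert \pi_h\, dH_x(v)\Vert\geq(1-c(m))\Vert v\Vert_P$ follows from the triangle inequality alone. Your approach is more work but buys more: it ties the $H_{1,-1/2}$-size of the image explicitly to a fixed coordinate system on $P$, which is exactly what is needed to justify that the "$m$-independent neighborhood $U'$" has image containing a ball of $m$-independent radius $R$ --- a uniformity point the paper's pullback-norm formulation leaves somewhat implicit. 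One small imprecision: the translation and phase modes are not \emph{exact} zero modes inside the inner balls $B_{\epsilon^i_{in}}(p_i)$, since the linearization there uses the ambient metric $g$ rather than the Euclidean one; the resulting error is of the same type as the first term estimated in lemma \ref{prop:ErrorEstimate}, and your appeal to those rescaled estimates covers it, so this does not affect the argument.
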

\begin{proof}
Notice that given $x \in P$, the map $dH_x : TP \rightarrow T_{H(x)}C_{k,m}$ is injective and we can define the norm
$$\Vert v \Vert_{P} = \Vert dH_x(v) \Vert_{H_{1,-\frac{1}{2}}},$$
where $v \in T_xP$ and $dH_x(v)= \frac{d}{dt}\Big\vert_{t=0} H( \exp_{x}(tv) )$. It follows from \cite{Kottke13} that $d_1^* \oplus d_2$ has index $i=4k + b^1(X)-1$ (our weight $\beta=-\frac{1}{2}$ corresponds in that reference to the weight $\gamma= -\beta -\frac{3}{2}=-1$). Moreover, the same proof as that of proposition \ref{prop:LinearEquation} gives that $d_1^* \oplus d_2$ is surjective and so we have $\dim(\ker (d_1^* \oplus d_2))= \dim (T_xP)$. Hence, in order to show that $\pi_h \circ H$ is an isomorphism it is enough to show that it is injective. In fact we shall prove an inequality of the form
\begin{equation}\label{eq:LocalDiffIneq}
\Vert \pi_h \circ dH_x(v)  - dH_x(v) \Vert_{H_{1,\beta}} \leq c(m) \Vert v \Vert_{P},
\end{equation}
with $c(m)>0$ converging to zero as $m \rightarrow +\infty$ and all $v \in T_xP$. To prove this inequality notice that the operator $d_1^* \oplus d_2$ has closed image, $\pi_h (dH_x (v))$ is in its kernel and $ \pi_h dH_x (v) - dH_x (v)$ is orthogonal to it. So, there is a constant $c>0$ such that for all $v$
$$\Vert d_1^* \oplus d_2 \left( \pi_h dH_x (v) - dH_x (v)\right) \Vert_{H_{0,\beta-1}} \geq c \Vert  \pi_h dH_x (v) - dH_x (v)  \Vert_{H_{1,\beta}},$$
and the left hand side is equal to $\Vert d_1^* \oplus d_2  ( dH_x(v)) \Vert_{H_{0,\beta-1}}$ which by construction of the map $H$ is no greater than $c(m) \Vert dH_x (v) \Vert_{H_{1,\beta}}= c(m) \Vert v \Vert_P$, with $c(m) \rightarrow 0$ as $m \rightarrow + \infty$. Hence, for large enough $m$, the map $\pi_h \circ dH$ is a degree $1$ map of a sphere of radius $R_1$ in $T_xP$ into a topological sphere in the annulus of in $\ker( d_1^* \oplus d_2)$ whose inner and outer radius are $R_1-c(m)$ and $R_1+c(m)$ respectively. Hence, by the implicit function theorem, for large $m$, $\pi_h \circ H$ is a local diffeomorphism and maps an $m$ independent neighborhood $U'$ of $x$ onto a neighborhood of $H(x)$. Moreover, one can use equation \ref{eq:LocalDiffIneq} to prove that there is an inequality
$$\Vert \pi_h dH_x (v)  \Vert_{H_{1,\beta}} \geq (1- c(m) ) \Vert v \Vert_{P}$$
and so the derivative $\pi_h \circ dH_x$ is uniformly bounded by below for large $m$. This implies that for large $m$, $\pi_h \circ H(U')$ is a neighborhood of $\pi_h \circ H(x)$ with a fixed size, in particular it contains a ball of a fixed radius $B_R$ around $\pi_h (H(x))$.
\end{proof}


Recalling the discussion preceding the statement of the previous lemma, we shall now go on to prove that there are $U$ and $V$ such that
$$\pi_h \circ \tilde{h}: U \rightarrow V$$
is a diffeomorphism. We write $\tilde{h}(x)=H(x) + Qu$ with $Qu$ given by proposition \ref{prop:MonopoleSolution}. Hence, it follows from elliptic regularity that we can obtain $H_{k,-\frac{1}{2}}$ bounds in on $Qu$ for all $k$, Moreover, these are such that converge to $0$ as $m \rightarrow \infty$, in particular
$$\Vert Qu \Vert_{H_{1, -\frac{1}{2}}} \leq Cm^{-\frac{7}{4}}.$$
Hence $\pi_h (\tilde{h}(x))= \pi_h (H(x))+ \pi_h (Qu)$ and $\Vert \pi_h (Qu) \Vert_{H_{1, -\frac{1}{2}}}$ is very small. It follows that the equation $\pi_h (\tilde{h}(x))=y$, for $y$ contained in a ball of radius $R'<R$ always has a unique solution provided that $Cm^{-\frac{7}{4}} < R-R'$. This can always be achieved by taking $m$ to be large enough, namely
$$m> \mu=\left(\frac{C}{R-R'} \right)^{\frac{7}{4}}.$$
Hence we can take $V=B_{R'}$ and $U \subset U'$ to be its inverse image via $\pi_h \circ \tilde{h}$. This finishes the proof of theorem \ref{th:MainTh2}.


\end{document}